\documentclass[reqno]{amsart}
\usepackage[top=1in,bottom=1in,left=1in,right=1in]{geometry}

\usepackage{amssymb}
\usepackage{multicol}
\usepackage{graphicx}
\usepackage[T1]{fontenc}
\usepackage[latin1]{inputenc}
\usepackage[all]{xy}

\usepackage{times}
\usepackage{graphicx}
\usepackage{extarrows}
\usepackage{flafter}
\usepackage{placeins}
\usepackage{enumitem}
\usepackage[OT2,T1]{fontenc}
\DeclareSymbolFont{cyrletters}{OT2}{wncyr}{m}{n}
\DeclareMathSymbol{\Sha}{\mathalpha}{cyrletters}{"58}

\usepackage{centernot}
\usepackage{tikz-cd}
\usepackage{mathrsfs}
\usepackage[color]{showkeys}
\usepackage{url}

\usepackage{graphicx} %插入图片的宏包
\usepackage{float} %设置图片浮动位置的宏包
\usepackage{subfigure} %插入多图时用子图显示的宏包
%\definecolor{refkey}{rgb}{0.9451,0.2706,0.4941}
%\definecolor{labelkey}{rgb}{0.9451,0.2706,0.4941}
%\definecolor{ruri}{rgb}{0.0078,0.3922,0.6510}

\definecolor{refkey}{rgb}{1,1,1}
\definecolor{labelkey}{rgb}{1,1,1}
\definecolor{cite}{rgb}{0.9451,0.2706,0.4941}
\definecolor{ruri}{rgb}{0.0078,0.4022,0.8010}

\usepackage[%
bookmarks=true,bookmarksnumbered=true,%
colorlinks=true,linkcolor=blue,citecolor=blue%
 ]{hyperref}

 \usepackage{mathrsfs}
\usepackage{extarrows}
\usepackage{texnames}
\usepackage[nameinlink,capitalize]{cleveref}

%\usepackage[dpi=1270,PostScript=dvips,heads=LaTeX]{diagrams}
%\diagramstyle{small,labelstyle=\scriptstyle}

\unitlength 1cm

\sloppy

\def\Aut{{\rm Aut}}

\def\Tr{{\rm Tr}}

\def\Tr{{\rm Tr}}
\def\Hom{{\rm Hom}}
\def\End{{\rm End}}

\def\Aut{{\rm Aut}}

%%%%%% RKL Defns %%%%%%%

\theoremstyle{plain}

\newtheorem{theorem}{Theorem}[section]

\newtheorem{proposition/example}[theorem]{Proposition/Example}
\newtheorem{proposition}[theorem]{Proposition}
\newtheorem{corollary}[theorem]{Corollary}
\newtheorem{lemma}[theorem]{Lemma}

\newtheorem{conjecture}[theorem]{Conjecture}

\theoremstyle{definition}
\newtheorem{definition}[theorem]{Definition}
\newtheorem{remark}[theorem]{Remark}

\newtheorem{example}[theorem]{Example}

\newtheorem{conjecture/question}[theorem]{Conjecture/Question}

\newtheorem{remark/definition}[theorem]{Remark/Definition}
\newtheorem{definition/notation}[theorem]{Definition/Notation}

\numberwithin{equation}{section}

\pagestyle{myheadings} \theoremstyle{remark}

\numberwithin{equation}{section}

% CS new definitions

\usepackage{extarrows}\usepackage{bm}

\setcounter{MaxMatrixCols}{12}
%TCIDATA{OutputFilter=latex2.dll}
%TCIDATA{Version=5.00.0.2570}
%TCIDATA{CSTFile=LaTeX article (bright).cst}
%TCIDATA{Created=Wednesday, March 13, 2002 14:34:52}
%TCIDATA{LastRevised=Monday, February 02, 2004 14:32:56}
%TCIDATA{<META NAME="GraphicsSave" CONTENT="32">}
%TCIDATA{<META NAME="SaveForMode" CONTENT="1">}
%TCIDATA{<META NAME="DocumentShell" CONTENT="General\SW\Blank - Standard LaTeX Article">}
%TCIDATA{Language=American English}

\begin{document}
\title{\textbf{Moduli Spaces of  Parabolic  Bundles over $\mathbb{P}^1$ with Five Marked Points }}

\author{Zhi Hu}

\address{\textsc{School of Mathematics and Statistics, Nanjing University of Science and Technology, Nanjing 210094, China}}

\email{halfask@mail.ustc.edu.cn }

\author{Pengfei Huang}

\address{\textsc{School of Mathematics, Nanjing University, Nanjing 210093, China}}

\email{pfhwangmath@gmail.com}

\author{Runhong Zong}

\address{\textsc{School of Mathematics, Nanjing University, Nanjing 210093, China}}

\email{rzong@nju.edu.cn}

\keywords{parabolic bundles, moduli spaces, stratification, foliation, nonabelian Hodge correspondence}
\subjclass[2020]{14D20, 14J60, 32G13,  53C07}

\maketitle

\begin{abstract}
This paper considers the moduli spaces/stacks of parabolic bundles (parabolic logarithmic flat bundles and parabolic logarithmic Higgs bundles with given spectrum) of rank 2 and degree 1 over $\mathbb{P}^1$ with five marked points. The foliation and stratification structures on these moduli spaces/stacks are investigated. In particular, we confirm Simpson's conjecture for the moduli space of parabolic logarithmic flat bundles with certain non-special weight system.
\end{abstract}

\renewcommand\abstractname{R\'esum\'e}

\begin{abstract}
Cet article \'etudie les espaces/champs de modules des fibr\'es paraboliques (fibr\'es plats logarithmiques paraboliques et fibr\'es de Higgs logarithmiques paraboliques avec spectre donn\'e) de rang 2 et de degr\'e 1 sur $\mathbb{P}^1$ avec cinq points marqu\'es. Les structures de feuilletage et de stratification sur ces espaces/champs de modules sont analys\'ees. En particulier, nous confirmons la conjecture de Simpson pour l'espace de modules des fibr\'es plats logarithmiques paraboliques avec un certain syst\`eme de poids non sp\'ecial.
\end{abstract}

\tableofcontents

\section{Introduction}

The notion of parabolic bundles on a compact Riemann surface $C$  with a set $D$ of marked points was first introduced by Seshadri \cite{se}. In this initial formulation, a parabolic bundle is a vector bundle endowed with a partial flag of the residual stalk at each marked point, and equipped with a real number in the range [0,1), called the weight, for each component of the flag. As a generalization of the celebrated Narasimhan--Seshadri theorem \cite{NS}, Mehta and Seshadri showed that there is a one-to-one correspondence between stable parabolic vector bundles over $C$ and irreducible unitary representations of the topological fundamental group $\pi_1(C\backslash D)$ with a fixed holonomy class around each marked point \cite{MS80}. In order to consider representations of $\pi_1(C\backslash D)$ into the general linear group, Simpson introduced the new object -- Higgs field compatible with the parabolic structure, and developed nonabelian Hodge theory in terms of filtered objects \cite{sx,s}. Maruyama and Yokogawa generalized the notion of parabolic bundles to the case of a higher dimensional variety with a reduced effective simple normal crossing divisor \cite{my,j}. Mochizuki established the parabolic nonabelian Hodge theory in the higher dimensional setting \cite{t, t1}. For rational weights, Biswas realized parabolic bundles on a smooth projective variety $X$ as orbifold bundles \cite{bis}, namely vector bundles on the Kawamata cover $p:Y\rightarrow X$ equipped with a lift of the action of the Galois group of $p$. Borne and Vistoli generalized Biswas's construction to the logarithmic scheme setting \cite{bv}; namely they showed the category of parabolic sheaves on a logarithmic scheme $X$ with weights lying in $\frac{\mathbb{Z}}{n}$ for some integer $n$ is equivalent to the category of quasi-coherent sheaves on the corresponding root stack $\sqrt[n]{X}$. Moreover, Talpo showed that parabolic sheaves with real weights on a fine saturated log analytic space can be treated as quasi-coherent sheaves of modules on its Kato--Nakayama space \cite{ta}.

Investigating the moduli space of parabolic bundles is an important problem. Generally speaking, there are two main approaches to construct moduli spaces. The first is algebraic: applying Mumford's geometric invariant theory, as done by the authors of \cite{MS80, bh,my,y}, where the crucial ingredient is the imposition of suitable stability conditions. The second is gauge-theoretic: identifying the moduli space with the space of solutions to certain Yang--Mills--Hitchin-type equations modulo gauge transformations, as done by the authors of \cite{p,ko,bq}. In this analytic approach, a key difficulty arises from the fact that some elliptic operators are not Fredholm anymore on the usual Sobolev spaces, so one needs to solve the equations in a smaller function space.

To establish a complete nonabelian Hodge theory, one should also take the Riemann--Hilbert problem into account, that is, the relationship between the Betti moduli space of representations of the fundamental group and the de Rham moduli space of flat connections. In the context of parabolic bundles (with non-special weights), this problem becomes more subtle since the Betti moduli space may have singularities when the local exponents (spectrum) of connections are special. As a result, new phenomena arise that do not occur in the absence of parabolic structures. For example, the Riemann--Hilbert correspondence can contract some families of compact subvarieties in the de Rham moduli space to the singular locus of the Betti moduli space\footnote{To establish a unified Riemann--Hilbert correspondence whenever the weights and the  spectra are special or non-special, we introduce the moduli space of parabolic representation pairs, consisting of a representation of the fundamental group and some parabolic subgroups \cite{hhq}.}. Inaba, Iwasaki and Saito studied this problem in detail for $C\simeq \mathbb{P}^1$ \cite{mk}, and later Inaba extended these results to the case when $C$ is a general Riemann surface \cite{m}.

Recently, the moduli spaces of parabolic (Higgs/flat) bundles have also appeared in the study of other interesting geometric objects. For example, they are closely related to gravitational instantons and Boalch's modularity conjecture \cite{Q1}, as well as to hyperpolygon spaces \cite{Q2,Q3}.

\vspace{2mm}

In \cite{lm}, Loray, Saito, and Simpson fixed a reduced effective divisor $\mathcal{D}=z_1+\cdots +z_4$ on $C\simeq\mathbb{P}^1$ and considered those pairs $(E, \nabla)$, where $E$ is a vector bundle of rank 2 and degree $d$ over $C$, and $\nabla : E\rightarrow E\otimes \Omega^1_{C}(\mathcal{D})$ is a connection having simple poles supported on $D=\{z_1, \cdots, z_4\}$. At each pole $z_i,i=1,\cdots,4$, we have two residual eigenvalues $\nu_i^+,\nu_i^-$, which together satisfy the Fuchs relation $d+\sum\limits_{i=1}^4(\nu_i^+ +\nu_i^-)=0$. Moreover, we can naturally introduce parabolic structures $\mathcal{L}=\{L_i\}_{i=1,\cdots,4}$ such that $L_i$ is a one dimensional subspace of $E|_{z_i}$ corresponding to an eigenspace of the residue of $\nabla$ at $z_i$ with the eigenvalue $\nu_i^+$. The dimension system of these parabolic structures is denoted by $\overrightarrow{d}$. Fixing the spectral data $\overrightarrow{\nu}=(\nu_1^+,\nu_1^-,\cdots,\nu_4^+,\nu_4^-)$, and introducing the weight system $\overrightarrow{w}$ for stability, by means of geometric invariant theory, we can construct the moduli space $M(d,\overrightarrow{d},\overrightarrow{\nu},\overrightarrow{w})$ of $\overrightarrow{w}$-stable parabolic logarithmic flat bundles of type $(d,\overrightarrow{d})$ with the spectrum $\overrightarrow{\nu}$. When the spectral data  $\overrightarrow{\nu}$ is non-special, $M(d,\overrightarrow{d},\overrightarrow{\nu},\overrightarrow{w})$ is independent of the choice of $\overrightarrow{w}$, and is then denoted by $M(d,\overrightarrow{d},\overrightarrow{\nu})$. It admits two kinds of fibrations, arising respectively from the apparent singularities of flat connections and from the parabolic structures. In fact, they can be derived from the $\mathbb{C}^\times$-limit into the moduli space of $\overrightarrow{w}$-stable strongly parabolic logarithmic Higgs bundles for $\overrightarrow{w}$ lying in the unstable zone and stable zone, respectively. Furthermore, these two fibrations are strongly transverse; that is, generic fibers intersect once.

In the present paper, we focus on various moduli spaces/stacks of parabolic bundles with logarithmic connections or Higgs fields over a Riemann sphere with five marked points, i.e. $(C,D)\simeq(\mathbb{P}^1,\{z_1,\cdots,z_5\})$.  Our main purpose is to generalize the results for $\mathbb{P}^1$ with four marked points in \cite{lm} to the case of five marked points. Similar considerations have also appeared in \cite{l,v,do}.

Let $M(1,\overrightarrow{d},\overrightarrow{\nu})$ be the moduli space of parabolic logarithmic flat bundles of type $(1,\overrightarrow{d})$ over $(C,D)$ with the non-special spectrum $\overrightarrow{\nu}$. Then there are two fibrations with 2-dimensional fibers on $M(1,\overrightarrow{d},\overrightarrow{\nu})$ as
\begin{align*}
  M^{\mathrm{ind}}(1,\overrightarrow{d})\xlongleftarrow{\varphi}M(1,\overrightarrow{d},\overrightarrow{\nu})\xlongrightarrow{\Psi_{\overrightarrow{w}}}\mathrm{Fix}(1,\overrightarrow{d},\overrightarrow{w}),
\end{align*}
where
\begin{itemize}
  \item $M^{\mathrm{ind}}(1,\overrightarrow{d})$ is the moduli space of indecomposable parabolic bundles of type $(1,\overrightarrow{d})$ over $(C,D)$;
  \item $\mathrm{Fix}(1,\overrightarrow{d},\overrightarrow{w})$ is the locus of $\mathbb{C}^\times$-fixed points lying in the moduli space $SH(1,\overrightarrow{d},\overrightarrow{w})$ of $\overrightarrow{w}$-stable strongly parabolic logarithmic Higgs bundles of type $(1, \overrightarrow{d})$ and parabolic degree zero over $(C,D)$ for a non-special weight system $\overrightarrow{w}=(w_1,\cdots,w_5)$ satisfying $\sum\limits_{i=1}^5w_i<1$;
\item the morphism $\varphi$ is just the forgetful map;
\item the morphism $\Psi_{\overrightarrow{w}}$ is given by taking the zero-limit of the $\mathbb{C}^\times$-action.
\end{itemize}

Let $N^{\mathrm{ind}}(1,\overrightarrow{d})$ be the moduli stack of indecomposable parabolic bundles of type $(1,\overrightarrow{d})$ over $(C,D)$. Then it is a $\mathbb{C}^\times$-gerbe over $M^{\mathrm{ind}}(1,\overrightarrow{d})$. By wall-crossing (variation of geometric invariant theory), the non-separated scheme $M^{\mathrm{ind}}(1,\overrightarrow{d})$ can be constructed as
\begin{align*}
      M^{\mathrm{ind}}(1,\overrightarrow{d})=P_0\biguplus P_1\biguplus P_2\biguplus P_3\biguplus P_4,
  \end{align*}
  where $P_0$ is isomorphic to a Del Pezzo surface of degree 4; $P_1, \cdots, P_4$ are all isomorphic to $ \mathbb{P}^2$; and $\biguplus$ stands for patching two charts via the morphisms from the Del Pezzo surface of degree 4 to $ \mathbb{P}^2$ along the maximal open subsets where they are one-to-one. Moreover, the connection between these two base schemes $M^{\mathrm{ind}}(1,\overrightarrow{d})$ and $\mathrm{Fix}(1,\overrightarrow{d},\overrightarrow{w})$ is given by the following theorem.
  
\begin{theorem}
 There exists a non-separated scheme $\mathfrak{F}$ such that
 \begin{itemize}
  \item $\mathfrak{F}$ is birationally equivalent to $M^{\mathrm{ind}}(1,\overrightarrow{d})$;
  \item $N^{\mathrm{ind}}(1,\overrightarrow{d})$ is a $\mathbb{C}^\times$-gerbe over $\mathfrak{F}$;
  \item $\mathfrak{F}$ contains an open subscheme $\mathfrak{F}'$, which is set-theoretically isomorphic to $\mathrm{Fix}(1,\overrightarrow{d},\overrightarrow{w})$ by a constructibly algebraic isomorphism.
\end{itemize}
\end{theorem}

The main result of this paper is to confirm the parabolic version of Simpson's foliation conjecture and (coarse granulated) stratification conjecture (cf. Conjecture \ref{c}) for the moduli space of parabolic logarithmic flat bundles of rank 2 and degree 1 over $\mathbb{P}^1$ with five marked points with given non-special spectrum and non-special \emph{unstable} weight system. More concretely, we show the following theorem.

\begin{theorem}\label{k}
Fix a non-special weight system $\overrightarrow{w}$ with $\sum\limits_{i=1}^5w_i<1$, and write $\mathrm{Fix}(1,\overrightarrow{d},\overrightarrow{w})=\coprod\limits_{\alpha}\mathrm{Fix}_\alpha(1,\overrightarrow{d},\overrightarrow{w})$ as the union of the connected components.
\begin{enumerate}
  \item $\Psi_{\overrightarrow{w}}: M(1,\overrightarrow{d},\overrightarrow{\nu})\rightarrow \mathrm{Fix}(1,\overrightarrow{d},\overrightarrow{w})$ is a surjective morphism with 2-dimensional fibers, which fit together into a regular foliation
on $ M(1,\overrightarrow{d},\overrightarrow{\nu})$.
  \item There is a coarse granulation of the index set $A=\{\alpha\}$ as
\begin{align*}
  A=A^0\coprod A^1,
\end{align*}
where
\begin{align*}
  A^0&=\{\alpha\in A: \dim_\mathbb{C}\mathrm{Fix}_\alpha(1,\overrightarrow{d},\overrightarrow{w})=0\}\neq \emptyset,\\
A^1&=\{\alpha\in A: \dim_\mathbb{C}\mathrm{Fix}_\alpha(1,\overrightarrow{d},\overrightarrow{w})=2\}\neq \emptyset,
\end{align*}
such that
\begin{align*}
 \overline{M^1(1,\overrightarrow{d},\overrightarrow{\nu})}\backslash M^1(1,\overrightarrow{d},\overrightarrow{\nu})=M^0(1,\overrightarrow{d},\overrightarrow{\nu}),
\end{align*}
where
\begin{align*}
  M^p(1,\overrightarrow{d},\overrightarrow{\nu})=\coprod_{\alpha\in A^p}M_\alpha(1,\overrightarrow{d},\overrightarrow{\nu}),\ p=0,1,
\end{align*}
with
\begin{align*}
  M_\alpha(1,\overrightarrow{d},\overrightarrow{\nu})
  =\ \{(E,\mathcal{L},\nabla)\in M(1,\overrightarrow{d},\overrightarrow{\nu})
 :\lim\limits_{c\rightarrow0}c\cdot (E,\mathcal{L},\nabla)\in \mathrm{Fix}_\alpha(1,\overrightarrow{d},\overrightarrow{w})\}.
\end{align*}
\end{enumerate}
\end{theorem}

\vspace{1mm}

\noindent\textbf{Related to other works}.
 For the case of parabolic logarithmic flat bundles of rank two over $\mathbb{P}^1$ with four marked points, Simpson's foliation conjecture was settled by Loray, Saito, and Simpson for \emph{any} non-special weight system \cite{lm}. When adding one marked point, the underlying vector bundle has two different types, which increases the complexity of the moduli space and the locus of $\mathbb{C}^\times$-fixed points. Therefore, in contrast with the case of four marked points, we not only describe the locus of $\mathbb{C}^\times$-fixed points in terms of a certain non-separated scheme, but also consider the deformations connecting different types by means of the \emph{middle logarithmic de Rham complex}. Recently, under the same setting, in comparison with our \emph{unstable} weight system, Fassarella and Loray confirmed Simpson's foliation conjecture with respect to some \emph{stable} weight system for the case of five marked points. Moreover, they showed that when the number of marked points is equal to or greater than 5, there exists a stable weight system such that the foliation conjecture is false \cite{fl}, more precisely, they checked that after a suitable choice of stable weight system, the regularity of foliation may be violated. However, if the number of marked points is greater than 5, it is not clear whether there exists a stable weight system such that the foliation conjecture still holds. Very recently, Esnault and Groechenig claimed a proof of the original Simpson's foliation conjecture on the moduli space of flat bundles over compact Riemann surfaces using positive characteristic methods \cite{eg}.

\vspace{3mm}

\noindent\textbf{Acknowledgements}.
 The authors would like to thank Ya Deng, Carlos Simpson and Kang Zuo for their useful discussions, and also thank Takuro Mochizuki and Frank Loray for their helpful communications. The authors are grateful to Fei Yu for introducing the excellent work \cite{e} of Eskin, Kontsevich, M\"{o}ller and Zorich, and to the anonymous referee for bringing the remarkable work \cite{do} of Donagi and Pantev to their attention. P. Huang would like to express deep gratitude to the Institut des Hautes \'Etudes Scientifiques, and the Max Planck Institute for Mathematics in the Sciences for their kind hospitality and support during the production of this paper.
  
  Z. Hu is supported by the National Natural Science Foundation of China (No. 12271253).  P. Huang is partially supported by the European Research Council (ERC) under the European Union's Horizon 2020 research and innovation program (grant agreement No 101018839) and Deutsche Forschungsgemeinschaft (DFG, Projektnummer 547382045). R. Zong is supported by the National Natural Science Foundation of China (Key Program 12331002).

\section{Parabolic Logarithmic Flat Bundles }

Let $C$ be a compact Riemann surface of genus $g$, $D=\{z_1,\cdots, z_n\}$ be a set consist of $n$ ($\geq 4$) distinct points in $C$, and let $X=C\backslash D$, $\mathcal{D}=z_1+\cdots +z_n$ be a reduced divisor on $C$. We first recall some definitions of parabolic bundles and parabolic logarithmic flat bundles.

\begin{definition}
Let $E$ be a vector bundle of rank $r$ and degree $d$ over $C$.
\begin{enumerate}
  \item  A \emph{quasi-parabolic structure} $\mathcal{L}$ on $E$ is given by
$\{\mathcal{F}^{(i)}\}_{i=1,\cdots, n}$, where 
 \begin{align*}
  \mathcal{F}^{(i)}: F^{(i)}_0=E|_{z_i}\supseteq F^{(i)}_1\supseteq\cdots \supseteq F^{(i)}_{\ell^{(i)}}\supset0=F^{(i)}_{\ell^{(i)}+1}
 \end{align*}
is a filtration of vector spaces on $E|_{z_i}$ of length $\ell^{(i)}$. Define
$\overrightarrow{d^{(i)}}=(d^{(i)}_{0},\cdots,d^{(i)}_{\ell^{(i)}})$, where $d^{(i)}_{\jmath}=\dim\mathrm{Gr}^{\mathcal{F}^{(i)}}_{\jmath}(E|_{z_i})$ for $ \mathrm{Gr}^{\mathcal{F}^{(i)}}_{\jmath}(E|_{z_i})=F^{(i)}_{\jmath}/F^{(i)}_{\jmath+1}$, $\jmath=1,\cdots,\ell^{(i)}$,
 then the collection $\{\overrightarrow{d^{(i)}}\}_{i=1,\cdots,n}$ is called the \emph{dimension system} of $\mathcal{L}$. In particular, $\mathcal{L}$ is called a \emph{ parabolic structure} if every component of $\overrightarrow{d^{(i)}},i=1,\cdots,n$,  is nonzero.
 The pair  $(E,\mathcal{L})$ is called a \emph{parabolic bundle } of type $(d, \{\overrightarrow{d^{(i)}}\}_{i=1,\cdots,n})$ on $(C,D)$ if $\mathcal{L}$ is a parabolic structure on $E$.
  \item A  \emph{proper  subbundle} $(E', \mathcal{L}')$  of $(E,\mathcal{L})$ consists of a proper subbundle  $E'$ of $ E$ with rank $r'>0$  and an induced quasi-parabolic structure  $\mathcal{L}'$ on $E'$ given by $\{\mathcal{F}'^{(i)}\}_{i=1,\cdots, n}$, where the filtration  $\mathcal{F}'^{(i)}$ is defined via $F'^{(i)}_\jmath=F^{(i)}_\jmath\bigcap E'|_{z_i}$, $\jmath=0,\cdots,\ell^{(i)}+1$. A parabolic  bundle $(E,\mathcal{L})$  is called \emph{indecomposable}, if it cannot be decomposed into the direct sum of proper  subbundles  of $(E,\mathcal{L})$ in the sense of equivalent classes of $E$.
  \item The \emph{weight system } $\{\overrightarrow{w^{(i)}}\}_{i=1,\cdots,n}$ on $(E,\mathcal{L})$ consists of  vectors $\overrightarrow{w^{(i)}}=(w^{(i)}_0,\cdots,w^{(i)}_{\ell^{(i)}})$ with  $0\leq w^{(i)}_0<\cdots<w^{(i)}_{\ell^{(i)}}<1$  at each $z_i$. The \emph{parabolic degree} of $(E,\mathcal{L})$ is defined as
\begin{align*}
\mathrm{ pdeg}(E,\mathcal{L})=\deg(E)+\sum_{i=1}^n\overrightarrow{d^{(i)}}\cdot\overrightarrow{w^{(i)}}.
\end{align*}
A parabolic  bundle $(E,\mathcal{L})$  is called $\{\overrightarrow{w^{(i)}}\}_{i=1,\cdots,n}$-\emph{stable} (resp. $\{\overrightarrow{w^{(i)}}\}_{i=1,\cdots,n}$-\emph{semistable}), if for any proper subbundle $(E',\mathcal{L}')$ the following inequality holds
       \begin{align*}
        \frac{\mathrm{ pdeg}(E',\mathcal{L}')}{r'}<(\textrm{resp.} \leq)\  \frac{\mathrm{ pdeg}(E,\mathcal{L})}{r}.
       \end{align*}
  \item  A parabolic bundle $(E,\mathcal{L})$ is called \emph{simple} if it has no non-scaler endomorphism of $E$ preserving the parabolic structure $\mathcal{L}$.
      \end{enumerate}
      \end{definition}

\begin{definition}
$E$ is the same as above.
      \begin{enumerate}
      \item A pair $(E,\nabla)$ is called a \emph{ logarithmic flat
bundle} over $(C,D)$  if
\begin{align*}
 \nabla: E\rightarrow E\otimes \Omega^1_C(\mathcal{D})
\end{align*}is a logarithmic connection on $E$. i.e. a connection having simple poles supported by $D$.
\item A logarithmic flat bundle $(E,\nabla)$ is called \emph{irreducible} if there is no nontrivial proper subbundle of $E$ which is preserved by the logarithmic connection $\nabla$.
\item For a logarithmic flat bundle $(E,\nabla)$, let $\overrightarrow{\nu^{(i)}}=(\nu^{(i)}_1,\cdots,\nu^{(i)}_r)$ be the eigenvalues (by algebraic multiplicities) of the residue $\mathrm{Res}_{z_i}(\nabla)$ of the logarithmic connection $\nabla$ at the point $z_i\in D$. The collection $\{\overrightarrow{\nu^{(i)}}\}_{i=1,\cdots,n}$, which satisfy the Fuchs relation
\begin{align*}
 d+\sum_{i=1}^n\sum_{\jmath=1}^r\nu^{(i)}_\jmath=0,
\end{align*}
 is called the \emph{spectrum} of $(E,\nabla)$.
  \item A triple $(E,\mathcal{L},\nabla)$ is called a \emph{parabolic logarithmic flat bundle} (resp. \emph{weak parabolic logarithmic flat
bundle}) over $(C,D)$ if

\begin{itemize}
  \item $(E,\mathcal{L})$ is  a parabolic bundle over  $(C,D) $;
  \item $(E,\nabla)$ is a logarithmic flat bundle;
  \item $(E,\nabla)$ is compatible with $(E,\mathcal{L})$, namely
  \begin{align*}
   (\mathrm{Res}_{z_i}(\nabla)-\nu^{(i)}_\jmath\mathrm{Id})F^{(i)}_\jmath\subseteq F^{(i)}_{\jmath+1}
  \end{align*}
 such that
 \begin{align*}
   \{\overrightarrow{\nu^{(i)}}=(\underbrace{\nu^{(i)}_0,\cdots,\nu^{(i)}_0}_{d^{(i)}_0},\cdots,\underbrace{\nu^{(i)}_{\ell^{(i)}},\cdots,\nu^{(i)}_{\ell^{(i)}}}_{d^{(i)}_{\ell^{(i)}}})\}_{i=1,\cdots,n}
 \end{align*}
is the spectrum of $(E,\nabla)$.

(resp. $(E,\nabla)$ is weakly compatible with $(E,\mathcal{L})$, namely $\mathrm{Res}_{z_i}(\nabla)$ preserves $\mathcal{F}^{(i)}$
  \begin{align*}
   (\mathrm{Res}_{z_i}(\nabla))F^{(i)}_\jmath\subset F^{(i)}_\jmath
  \end{align*}
with eigenvalues $_\jmath\nu^{(i)}_1,\cdots, {_\jmath\nu^{(i)}_{d^{(i)}_\jmath}}$ (by algebraic multiplicities) on $\mathrm{Gr}^{\mathcal{F}^{(i)}}_\jmath$ such that
 \begin{align*}
   \{\overrightarrow{\nu^{(i)}}=(_0\nu^{(i)}_1,\cdots, {_0\nu^{(i)}_{d^{(i)}_0}},\cdots, {_{\ell^{(i)}}\nu^{(i)}_1},\cdots, {_{\ell^{(i)}}\nu^{(i)}_{d^{(i)}_{\ell^{(i)}}}})\}_{i=1,\cdots,n}
 \end{align*}
is the spectrum of $(E,\nabla)$.)
\end{itemize}
  \item Given a weight system $\{\overrightarrow{w^{(i)}}\}_{i=1,\cdots,n}$, a (weakly) parabolic logarithmic flat bundle $(E,\mathcal{L},\nabla)$
bundle is called $\{\overrightarrow{w^{(i)}}\}_{i=1,\cdots,n}$-\emph{stable} (resp. $\{\overrightarrow{w^{(i)}}\}_{i=1,\cdots,n}$-\emph{semistable}), if for any proper subbundle $(E',\mathcal{L}')$ of $(E,\mathcal{L})$ with $E'$ being preserved by the logarithmic connection $\nabla$, the following inequality holds
       \begin{align*}
        \frac{\mathrm{ pdeg}(E',\mathcal{L}')}{r'}<(\textrm{resp.} \leq)\  \frac{\mathrm{ pdeg}(E,\mathcal{L})}{r}.
       \end{align*}
\end{enumerate}

\end{definition}

 Let $\mathcal{M}_{0,n}$ be the moduli space of $n$ marked points on $\mathbb{P}^1$, which is isomorphic to
\begin{align*}
 \underbrace{\mathcal{M}_{0,4}\times\cdots\times\mathcal{M}_{0,4}}_{n-3}\backslash\{\textrm{all diagonals}\}
\end{align*}
 with $\mathcal{M}_{0,4}\simeq \mathbb{P}^1\backslash\{0,1,\infty\}$, and take a universal family $\mathcal{U}$ over $\mathcal{M}_{0,n}$.
It is known that there exists a relative fine moduli scheme
\begin{align*}
 M(d,\{\overrightarrow{d^{(i)}}\}_{i=1,\cdots,n},\{\overrightarrow{w^{(i)}}\}_{i=1,\cdots,n})\rightarrow \mathcal{M}_{0,n}\times R(d)
\end{align*}
 of $\{\overrightarrow{w^{(i)}}\}_{i=1,\cdots,n}$-stable parabolic logarithmic flat bundles of type $(d, \{\overrightarrow{d^{(i)}}\}_{i=1,\cdots,n})$, which is smooth and quasi-projective \cite{mk,m}. The fiber $M_{\mathcal{U}_D,\{\overrightarrow{\nu^{(i)}}\}_{i=1,\cdots,n}}(d,\{\overrightarrow{d^{(i)}}\}_{i=1,\cdots,n},\{\overrightarrow{w^{(i)}}\}_{i=1,\cdots,n})$ over $(D,\{\overrightarrow{\nu^{(i)}}\}_{i=1,\cdots,n})\in \mathcal{M}_{0,n}\times R(d)$ is the moduli space of $\{\overrightarrow{w^{(i)}}\}_{i=1,\cdots,n}$-stable parabolic logarithmic flat bundles of type $(d, \{\overrightarrow{d^{(i)}}\}_{i=1,\cdots,n})$ with the spectrum
$\{\overrightarrow{\nu^{(i)}}\}_{i=1,\cdots,n}$. Fixing $(C,D)$, the moduli space $M_{\mathcal{U}_D,\{\overrightarrow{\nu^{(i)}}\}_{i=1,\cdots,n}}(d,\{\overrightarrow{d^{(i)}}\}_{i=1,\cdots,n},\{\overrightarrow{w^{(i)}}\}_{i=1,\cdots,n})$ is also simply denoted by $M(d,\{\overrightarrow{d^{(i)}}\}_{i=1,\cdots,n},\{\overrightarrow{w^{(i)}}\}_{i=1,\cdots,n},\{\overrightarrow{\nu^{(i)}}\}_{i=1,\cdots,n})$.

In this paper, unless otherwise stated,  we mainly focus on the following case: $C$ is the projective line $\mathbb{P}^1$, the rank $r$ of $E$ is 2, and the dimension system is $\{\overrightarrow{d^{(i)}}=(1,1)\}_{i=1,\cdots,n}$, which is replaced by the notation $\overrightarrow{d}$. Then  we also denote $F_1^{(i)}$ by $L_i$, thus the parabolic structure $\mathcal{L}$ is given by $\{L_i\}_{i=1,\cdots,n}$. On the other hand, the weight system  
$\{\overrightarrow{w^{(i)}}\}_{i=1,\cdots,n}$ is
 chosen as $w^{(i)}_0=0, i=1,\cdots,n$, then $\overrightarrow{w}$ is rewritten as $\overrightarrow{w}=(w_1,\cdots, w_n)$ with $w_i=w^{(i)}_1, i=1,\cdots,n$.

 For simplicity, we assume all marked points $z_i$, $i = 1,\cdots ,n$, lie in an affine part of $C$. Write $E\simeq\mathcal{O}_{\mathbb{P}^1}(d_0)\oplus \mathcal{O}_{\mathbb{P}^1}(d_1)$ with $d_0+d_1=d, d_0\leq d_1$, and let $e$ and $f$ be the unit section of $\mathcal{O}_{\mathbb{P}^1}(d_0)$ and $\mathcal{O}_{\mathbb{P}^1}(d_1)$, respectively, such that the zero or pole of $e,f$ is supported by $\infty\in C$, then over the affine part $C\backslash\{\infty\}$ with the coordinate $z$, the logarithmic connection $\nabla$ is given by
\begin{align*}
  \nabla=\mathrm{d}+\sum_{i=1}^n\frac{A_i}{z-z_i}\mathrm{d}z+G(z)\mathrm{d}z,
\end{align*}
where
\begin{align*}
  A_i=\left(
             \begin{array}{cc}
               A_i^{(11)} &  A_i^{(12)} \\
                A_i^{(21)} &  A_i^{(22)} \\
             \end{array}
           \right),\ G(z)=\left(
          \begin{array}{cc}
            G^{(11)}(z) & G^{(12)}(z) \\
            G^{(21)}(z) & G^{(22)}(z) \\
          \end{array}
        \right)
\end{align*}
 are $2\times 2$ matrices in terms of the trivialization $E|_{C\backslash\{\infty\}}\simeq \mathbb{C}e\oplus \mathbb{C}f$, and $G$ is holomorphic. Define new sections $e'=z^{-d_0}e, f'=z^{-d_1}f$, then over  the affine part $C\backslash\{0\}$, we have
\begin{align*}
  \nabla=\mathrm{d}+\sum_{i=1}^n\frac{A'_i}{z-z_i}\mathrm{d}z+G'(z)\mathrm{d}z,
\end{align*}
where
\begin{align*}
 A'_i=\left(
             \begin{array}{cc}
               A_i^{(11)} &  z^{d_1-d_0}A_i^{(12)} \\
               z^{d_0-d_1} A_i^{(21)} &  A_i^{(22)} \\
             \end{array}
           \right),\ G'(z)=\left(
          \begin{array}{cc}
            G^{(11)}(z)+d_0z^{-1} & z^{d_1-d_0}G^{(12)}(z) \\
           z^{d_0-d_1} G^{(21)}(z) & G^{(22)}(z) +d_1z^{-1}\\
          \end{array}
        \right)
\end{align*}
in terms of the trivialization $E|_{C\backslash\{0\}}\simeq \mathbb{C}e'\oplus \mathbb{C}f'$. Then the holomorphicity at $\infty$ leads to
\begin{itemize}
  \item $ \sum\limits_{i=1}^nA_i^{(11)}=-d_0, \sum\limits_{i=1}^nA_i^{(22)}=-d_1$;
  \item if $d_1-d_0+2\leq n$, $A_i^{(12)}$ satisfy
  \begin{align*}
    \sum_iA_i^{(12)}\sum_{k_1<\cdots<k_l\atop  k_1,\cdots,k_l\neq i}z_{k_1}\cdots z_{k_l}=0
  \end{align*}
  for $l=1,\cdots, d_1-d_0+1$, and if $d_1-d_0+2> n$, all $A_i^{(12)}$ vanish;
  \item  $\sum\limits_{i=1}^5A^{(21)}_i=0$ if  $d_0=d_1$;
  \item $ G^{(11)}= G^{(22)}=G^{(12)}=0$;
  \item if $d_1-d_0\geq 2$, $G^{(21)}(z)$ is a polynomial of degree of no greater than $d_1-d_0-2$, and if $d_1-d_0< 2$, $G^{(21)}(z)=0$.
\end{itemize}

In particular, we find that if $(E,\nabla)$ is an irreducible logarithmic flat bundle of degree $d$ over $(C,D)$, then if $d=2r$, $E$ is isomorphic to one of the following\footnote{For a real number $s$, one denotes by $[s]$ the integer $t$ satisfying $t\leq s$ and $0\leq |s-t|<1$.}
\begin{align*}
  \mathcal{O}_{\mathbb{P}^1}(r)\oplus \mathcal{O}_{\mathbb{P}^1}(r), \mathcal{O}_{\mathbb{P}^1}(r-1)\oplus \mathcal{O}_{\mathbb{P}^1}(r+1), \cdots, \mathcal{O}_{\mathbb{P}^1}(r+1-[\frac{n}{2}])\oplus\mathcal{O}_{\mathbb{P}^1}(r-1+[\frac{n}{2}]),
\end{align*}
and if $d=2r+1$, $E$ is isomorphic to one of the following
\begin{align*}
  \mathcal{O}_{\mathbb{P}^1}(r)\oplus \mathcal{O}_{\mathbb{P}^1}(r+1), \mathcal{O}_{\mathbb{P}^1}(r-1)\oplus \mathcal{O}_{\mathbb{P}^1}(r+2), \cdots, \mathcal{O}_{\mathbb{P}^1}(r+1-[\frac{n}{2}])\oplus\mathcal{O}_{\mathbb{P}^1}(r+[\frac{n}{2}]).
\end{align*}

We denote $\nu_1^{(i)}$ by $\nu_i^+$, $\nu_0^{(i)}$ by $\nu_i^-$, thus the spectrum of $(E,\nabla)$ is given by $
  \overrightarrow{\nu}=(\nu_1^+,\nu_1^-,\cdots,\nu_n^+,\nu_n^- )$.  Obviously, the parabolic structure adds nontrivial data only if $\nu_i^+=\nu_i^-$ at some $z_i$ since $L_i$ is exactly the eigenspace of $\mathrm{Res}_{z_i}(\nabla)$ with respect to the eigenvalue $\nu_i^+$.

\begin{definition}
We call $\overrightarrow{\nu}\in R(d):=\{\overrightarrow{\nu}=(\nu_1^+,\nu_1^-,\cdots,\nu_n^+,\nu_n^- )\in\mathbb{C}^{2n}: d+  \sum\limits_{i=1}^n(\nu_i^++\nu_i^-)=0\}\simeq \mathbb{C}^{2n-1}$
\begin{itemize}
    \item \emph{Kostov-generic}, if for any $\sigma_i\in \{+,-\}$, $i=1,\cdots, n$ we have
\begin{align*}
 \sum_{i=1}^n\nu_i^{\sigma_i}\notin\mathbb{Z};
\end{align*}
    \item \emph{non-resonant}, if
\begin{align*}
  \nu_i^+-\nu_i^-\notin\mathbb{Z}, \ \ i=1,\cdots,n;
\end{align*}
    \item \emph{non-special}, if it is both Kostov-generic and non-resonant.
\end{itemize}
\end{definition}

The following proposition is well-known (for example, see \cite{a,lm}).

\begin{proposition}\label{n}\
\begin{enumerate}
  \item Given Kostov-generic $\overrightarrow{\nu}\in R(d)$, let  $(E, \mathcal{L},\nabla)$ be  a parabolic logarithmic flat  bundle of type $(d,\overrightarrow{d})$ over $(C,D)$ with the spectrum $\overrightarrow{\nu}$, then
      \begin{enumerate}
        \item the underlying  logarithmic flat bundle $(E,\nabla)$ is irreducible, hence $d_1-d_0\leq n-2$ in terms of $E\simeq\mathcal{O}_{\mathbb{P}^1}(d_0)\oplus \mathcal{O}_{\mathbb{P}^1}(d_1)$ with $d_0\leq d_1$,
        \item the underlying  parabolic bundle $(E,\mathcal{L})$ is indecomposable,
        \item the underlying parabolic bundle $(E,\mathcal{L})$ is simple.
      \end{enumerate}
  \item If $(E,\mathcal{L})$ is an indecomposable parabolic bundle of type $(d,\overrightarrow{d})$ over $(C,D)$, then for a given $\overrightarrow{\nu}\in R(d)$ there is  a logarithmic connection $\nabla$ on $E$ such that $(E,\mathcal{L},\nabla)$ is a parabolic logarithmic flat  bundle with the spectrum $\overrightarrow{\nu}$.
\end{enumerate}

\end{proposition}

\begin{proof} (1)
(a)
Assume $F$ is a line subbundle of $E$ preserved by $\nabla$, then the residue of $\nabla|_F$ at each point  $z_i$ is exactly the number $\nu_i^{\sigma_i}$ for certain $\sigma_i\in\{+,-\}$. Indeed, for $z_i\in D$, denoting  $x_i=\mathrm{Res}_{z_i}(\nabla|_F)$ and  $y_i=\mathrm{Res}_{z_i}(\nabla|_{E/F})$, we have
\begin{align*}
  x_iy_i&=\nu_i^+\nu_i^-,\\
x_i+y_i&=\nu_i^++\nu_i^-,
\end{align*}
thus
\begin{align*}
  x_i&=\frac{\nu_i^++\nu_i^-\mp|\nu_i^+-\nu_i^-|}{2}\in \{\nu_i^+,\nu_i^-\},\\
y_i&=\frac{\nu_i^++\nu_i^-\pm|\nu_i^+-\nu_i^-|}{2}\in \{\nu_i^+,\nu_i^-\}.
\end{align*} Then by Fuchs relation,
\begin{align*}
  \sum_{i=1}^n\nu_i^{\sigma_i}=-\deg(F)\in \mathbb{Z},
\end{align*}
which contradicts to the Kostov-genericity condition.

 (b) Assume $(E,\mathcal{L})$ is decomposed into the direct sum of two proper subbundles as $(E,\mathcal{L})\simeq(E_1,\mathcal{L}_1)\oplus (E_2,\mathcal{L}_2)$.  According to the decomposition of $E$, one writes $\nabla=\left(
                  \begin{array}{cc}
                        \nabla_1 & \theta \\
                        \xi & \nabla_2 \\
                  \end{array}
         \right)$, 
thus $(E_1,\nabla_1)$ and $(E_2,\nabla_2)$ form  logarithmic flat line bundles. Note that  $\theta$ and $\xi$ are non-zero, and the residues at each point $z_i\in D$ of  $\theta$ and $\xi$ cannot be non-zero simultaneously. Then we have the identities
\begin{align*}
d&=\deg(E_1)+\deg(E_2)\\
&=\sum_{z_i\in D} \mathrm{Res}_{z_i}(\nabla_1)+\sum_{z_i\in D}\mathrm{Res}_{z_i}(\nabla_2)=\sum_{i=1}^n\nu_i^{\sigma_i}
\end{align*}
for certain $\sigma_i\in\{+,-\}$,
 which also contradicts to the Kostov-genericity condition.

 (c)  Let $e,f$ be the unit sections of $\mathcal{O}_{\mathbb{P}^1}(d_0)$, $ \mathcal{O}_{\mathbb{P}^1}(d_1) $, respectively, then the 1-dimensional subspaces $L_i$ at $z_i\in D$ is generated by $\kappa_ie(z_i)+\lambda_if(z_i)$, hence it is parameterized by $u_i=[\kappa_i:\lambda_i]\in \mathbb{P}^1$. Let $\sigma$ be an automorphism of the parabolic bundle $(E,\mathcal{L})$.
 \begin{description}
   \item[Case I $d_1>d_0$] In terms of $E\simeq\mathcal{O}_{\mathbb{P}^1}(d_0)\oplus \mathcal{O}_{\mathbb{P}^1}(d_1)$,  one writes
  \begin{align*}
   \sigma=\left(
           \begin{array}{cc}
             \rho & 0 \\
            \zeta & \rho' \\
           \end{array}
         \right)
  \end{align*}
  for $\rho,\rho'\in \mathbb{C}^{\times}, \zeta\in H^0(\mathbb{P}^1, \mathcal{O}_{\mathbb{P}^1}(d_1-d_0))$.
The action of  $\sigma$ on  the parabolic structure $\mathcal{L}=\{L_i\}_{i=1,\cdots,n}$ is given by
\begin{align*}
\sigma\cdot(u_1,\cdots,u_n)=(u_1^\prime,\cdots, u_n^\prime),
\end{align*}
where
\begin{align*}
  u_i^\prime=[\rho\kappa_i:\zeta(z_i)\kappa_i+\rho'\lambda_i]=[\kappa_i:\frac{\zeta(z_i)\kappa_i+\rho'\lambda_i}{\rho}].
\end{align*}
Let $D_\infty=\{z_i\in D: L_i=\mathcal{O}_{\mathbb{P}^1}(d_1)\}, \widehat{D_\infty}=D\backslash D_\infty$  and $n_\infty=|D_\infty|$.  Consider the short exact sequence
\begin{align*}
  0\rightarrow\mathcal{O}_{\mathbb{P}^1}(-n+n_\infty+d_1-d_0)\rightarrow\mathcal{O}_{\mathbb{P}^1}(d_1-d_0)\rightarrow\bigoplus_{z_i\in \widehat{D_\infty}}\mathcal{O}_{\mathbb{P}^1}(d_1)|_{z_i}\rightarrow0,
\end{align*}
which induces the  exact sequence
\begin{align*}
 0\rightarrow H^0(\mathbb{P}^1,\mathcal{O}_{\mathbb{P}^1}(d_1-d_0))\rightarrow\bigoplus_{z_i\in \widehat{D_\infty}}\mathcal{O}_{\mathbb{P}^1}(d_1)|_{z_i}\rightarrow H^1(\mathbb{P}^1,\mathcal{O}_{\mathbb{P}^1}(-n+n_\infty+d_1-d_0))\rightarrow 0.
\end{align*}
 Therefore \begin{align*}
&\bigoplus_{z_i\in \widehat{D}}\mathcal{O}_{\mathbb{P}^1}(d_1)|_{z_i}/(H^0(\mathbb{P}^1,\mathcal{O}_{\mathbb{P}^1}(d_1-d_0))\ltimes\mathbb{C}^\times)\\
\simeq &\ H^1(\mathbb{P}^1,\mathcal{O}_{\mathbb{P}^1}(-n+n_\infty+d_1-d_0))/\mathbb{C}^\times
 \simeq  \ H^0(\mathbb{P}^1,\mathcal{O}_{\mathbb{P}^1}(n-n_\infty-d_1+d_0-2)^*/\mathbb{C}^\times.
\end{align*}
It follows that the indecomposablity of $(E,\mathcal{L})$ guarantees $n-n_\infty\geq d_1-d_0+2$. On the other hand, $\sigma$ preserving the parabolic structure leads to
\begin{align*}
  \zeta(z_i)+(\rho'-\rho)u_i=0
\end{align*}
for any $z_i\in \widehat{D}$. There are following two cases.
\begin{itemize}
  \item If $\rho=\rho'$, i.e. $\zeta(z_i)=0$ for any $z_i\in \widehat{D}$,  we must have $d_1-d_0\geq n-n_\infty$, which is a contradiction.

  \item If $\rho\neq\rho'$, we take a subbundle $L\subset E$  generated by the section $e+\frac{\zeta(z)}{\rho-\rho'}f$, then $L\simeq  \mathcal{O}_{\mathbb{P}^1}(d_0)$ and
 $L|_{z_i}=L_i$ for any  $z_i\in \widehat{D}$, which contradicts to the indecomposablity of  $(E,\mathcal{L})$.
\end{itemize}

\item[Case II $d_1=d_0$] In terms of $E\simeq\mathcal{O}_{\mathbb{P}^1}(d_0)\oplus \mathcal{O}_{\mathbb{P}^1}(d_0)$,  one writes
  \begin{align*}
   \sigma=\left(
           \begin{array}{cc}
             \rho & \zeta \\
            \eta & \rho' \\
           \end{array}
         \right)
  \end{align*}
  for $\rho\in \mathbb{C}^{\times}, \zeta,\eta\in \mathbb{C}$. There are following two subcases.
  \begin{description}
    \item[Subcase I $\eta=0$] The argument is the same as the Case I.
    \item [Subcase II $\eta\neq0$] We have
    \begin{align*}
      u_i=\frac{\rho'-\rho+\sqrt{(\rho'-\rho)^2+4\eta\zeta}}{2\eta}
    \end{align*}
    for any $z_i\in D$. The right hand side of the above equality may take two values denoted by $\omega_1, \omega_2$, then there are two subbundles $L_1, L_2$ of $E$ generated by the section $e+\omega_1f$ and $e+\omega_2f$, respectively. It is obvious that $L_1\simeq L_2\simeq \mathcal{O}_{\mathbb{P}^1}(d_0)$ and $L_1|_{z_i}=L_i$ for $z_i$ with $ \frac{\lambda_i}{\kappa_i}=\omega_1$, $L_2|_{z_i}=L_i$ for $z_i$ with $ \frac{\lambda_i}{\kappa_i}=\omega_2$. This also contradicts to the indecomposablity of  $(E,\mathcal{L})$.
  \end{description}
\end{description}
 Finally, we conclude that the automorphism $\sigma$ has the form of $\rho\cdot\textrm{Id}$ for some nonzero constant $\rho$.

(2) This claim is just the application of Weil criterion of parabolic version \cite{b,w}.
\end{proof}

Let $N^{\mathrm{ind}}(d_0,d_1,\overrightarrow{d})$ be the moduli stack of parabolic structures with dimension system $\overrightarrow{d}$ on $E\simeq\mathcal{O}_{\mathbb{P}^1}(d_0)\oplus \mathcal{O}_{\mathbb{P}^1}(d_1)$ ($d_1-d_o\leq n-2$).
\begin{description}
   \item[Case I $d_1>d_0$]Let \begin{align*}
 U_{i_1\cdots i_\ell}&=\{(u_1, \cdots,u_n): u_{i_1}, \cdots,u_{i_\ell}\textrm{ lie in } \mathbb{C}, \textrm{ and other } u_i \textrm{ take } \infty\}\simeq\mathbb{C}^{\ell},
\end{align*}
where $ \ell, i_1,\cdots,i_n\in\{1,\cdots,n\}, i_1<\cdots<i_n$, be the set of parabolic structures on $E$ given by $(u_1, \cdots,u_n)\in (\mathbb{P}^1)^n$, and
let $A_E$ denote the automorphism of $E$. Since  $d_1>d_0$, $A_E$ preserves  $U_{i_1\cdots i_\ell}$, and the dimension of the $A_E$-quotient is given by
\begin{align*}
      \dim_\mathbb{C} U_{i_1\cdots i_\ell}/A_E&=
                            \dim_\mathbb{C} H^0(\mathbb{P}^1,\mathcal{O}_{\mathbb{P}^1}(\ell-d_1+d_0-2))-1\\
                           & = \ell-d_1+d_0-2.
\end{align*}
There are three subcases.
\begin{description}
  \item[Subcase I $\ell<d_1-d_0+2 $] $U_{i_1\cdots i_\ell}/A_E$ is a set of single point.
  \item[Subcase II $\ell=d_1-d_0+2 $]  We write
 \begin{align*}
  \zeta(z_i)=a_{d_1-d_0}z_i^{d_1-d_0}+a_{d_1-d_0-1}z_i^{d_1-d_0-1}+\cdots+a_0
 \end{align*}
 for $z_i\in \widehat{D_\infty}$,
and introduce the matrix
 \begin{align*}
       \Pi_{\widehat{D_\infty}}=\left(
                                  \begin{array}{ccccc}
                                        z_{i_1}^{d_1-d_0}&z_{i_1}^{d_1-d_0-1}&\cdots&1&u_{i_1}\\
                                        z_{i_2}^{d_1-d_0}&z_{i_2}^{d_1-d_0-1}&\cdots&1&u_{i_2}\\
                                                            \vdots&\vdots&\vdots&\vdots&\cdots\\
                                        z_{i_{d_1-d_0+2}}^{d_1-d_0}&z_{i_{d_1-d_0+2}}^{d_1-d_0-1}&\cdots&1&u_{i_{d_1-d_0+2}}
                                   \end{array}
                                  \right),
\end{align*}
 then $U_{i_1\cdots i_{d_1-d_0+2}}=U'_{i_1\cdots i_{d_1-d_0+2}}\coprod U''_{i_1\cdots i_{d_1-d_0+2}}$, where $U'_{i_1\cdots i_{d_1-d_0+2}}$ and $U''_{i_1\cdots i_{d_1-d_0+2}}$
 are two $A_E$-invariant subsets given by, respectively,
 \begin{align*}
 U_{i_1\cdots i_{d_1-d_0+2}}'&=\{(u_{i_1},\cdots,u_{i_{d_1-d_0+2}}):u_{i_1},\cdots,u_{i_{d_1-d_0+2}}\in \mathbb{C}, \det \Pi_{\widehat{D_\infty}}\neq0\},\\
  U_{i_1\cdots i_{d_1-d_0+2}}''&=\{(u_{i_1},\cdots,u_{i_{d_1-d_0+2}}):u_{i_1},\cdots,u_{i_{d_1-d_0+2}}\in \mathbb{C}, \det \Pi_{\widehat{D_\infty}}=0\}.
\end{align*}
The parabolic structures on $E$ parameterized by $U_{i_1\cdots i_{d_1-d_0+2}}'$ are indecomposable.
  \item[Subcase III $\ell>d_1-d_0+2$] The indecomposable parabolic structures on $E$ parameterized by the set
\begin{align*}
 \coprod\limits_{1\leq i_1<\cdots<i_{\ell}\leq n} U'_{i_1\cdots i_{\ell}}= \coprod\limits_{1\leq i_1<\cdots<i_{\ell}\leq n} U_{i_1\cdots i_{\ell}}\backslash[( \underbrace{ 0,\cdots,0}_{\ell})]
\end{align*}
where
$[(\underbrace{0,\cdots,0}_{\ell})]$ denotes the $A_E$-orbit of $(\underbrace{ 0,\cdots,0}_{\ell})$ lying in $U_{i_1\cdots i_{\ell}}$. For the $A_E$-quotient, we have
\begin{align*}
  U'_{i_1\cdots i_{\ell}}/A_E&\simeq (\ H^0(\mathbb{P}^1,\mathcal{O}_{\mathbb{P}^1}(\ell-d_1+d_0-2)^*\backslash\{0\})/\mathbb{C}^\times\\
  &\simeq \mathbb{P}^{\ell-d_1+d_0-2}.
\end{align*}\end{description}

\item[Case II $d_1=d_0$]The set of $A_E$-equivalent classes of parabolic structures on $E$ is the quotient $(\mathbb{P}^1)^n/\mathrm{PGL}(2,\mathbb{C})$. Moreover,
 restricting on the set of indecomposable parabolic structures, we consider the quotient
 \begin{align*}
  \{(u_1,\cdots,u_n): u_1,\cdots,u_n\in \mathbb{P}^1, \textrm{ there exist at least three distinct } u_i's\}/\mathrm{PGL}(2,\mathbb{C}).
 \end{align*}
  For the set  $D^\ell=\{z_2,\cdots,z_\ell\}$ and the subset   $D_0^\ell\subsetneq \{z_2,\cdots,z_\ell\}$ with $\ell\in\{2,\cdots, n-1\}$, one defines
  \begin{align*}
     U_{D_0^\ell}=\{&(u_1,u_2,\cdots,u_\ell,u_{\ell+1},\cdots,u_n)\in (\mathbb{P}^1)^n: \\
    & u_i=u \textrm{ for } z_i\in\{z_1\}\bigcup D_0^\ell, z_j=v \textrm{ for } z_j\in D^\ell\backslash D_0^\ell, u\neq v\neq u_{\ell+1} \},
  \end{align*}
  then

 \begin{align*}
   U_{D_0^\ell}/\mathrm{PGL}(2,\mathbb{C})&\simeq\{(0,u_2,\cdots,u_\ell,1,u_{\ell+2},\cdots,u_n):u_i\left\{
                                                                         \begin{array}{ll}
                                                                          = 0, & \hbox{$z_i\in D_0^\ell$;} \\
                                                                          = \infty, & \hbox{$z_i\in D^\ell\backslash D_0^\ell$;} \\
                                                                           \in\mathbb{P}^1, & \hbox{$i=\ell+2,\cdots, n$}
                                                                         \end{array}
          \right.
   \}\\
   &\simeq(\mathbb{P}^1)^{n-\ell-1}.
 \end{align*}

 \end{description}

In summary, we have the following theorem.

\begin{theorem}\label{q}Let $ N^{\mathrm{ind}}(d,\overrightarrow{d})$ be the moduli stack of indecomposable parabolic bundles of type $(d,\overrightarrow{d})$ over $(C,D)$, then
\begin{align*}
 N^{\mathrm{ind}}(d,\overrightarrow{d})=\left\{
                                          \begin{array}{ll}
                                         N^{\mathrm{ind}}(r,r,\overrightarrow{d})\coprod N^{\mathrm{ind}}(r-1,r+1,\overrightarrow{d})\coprod\cdots\coprod N^{\mathrm{ind}}(r+1-[\frac{n}{2}],r-1+[\frac{n}{2}],\overrightarrow{d}), & \hbox{$d=2r$;} \\
                                          N^{\mathrm{ind}}(r,r+1,\overrightarrow{d})\coprod N^{\mathrm{ind}}(r-1,r+2,\overrightarrow{d})\coprod\cdots\coprod N^{\mathrm{ind}}(r+1-[\frac{n}{2}],r+[\frac{n}{2}],\overrightarrow{d}), & \hbox{$d=2r+1$,}
                                          \end{array}
                                        \right.
\end{align*}
where
\begin{itemize}
  \item  $N^{\mathrm{ind}}(d_0,d_1,\overrightarrow{d})$ ($d_0<d_1$) has
 a stratification
\begin{align*}
  N^{\mathrm{ind}}(d_0,d_1,\overrightarrow{d})=\coprod_{\alpha=0}^{n-d_1+d_0-2}\mathfrak{N}_{\alpha},
\end{align*}
with the  stratum
\begin{align*}
  \mathfrak{N}_{\alpha}=
                              \coprod\limits_{1\leq i_1<\cdots<i_{\alpha+d_1-d_0+2}\leq n}[U'_{i_1\cdots i_{\alpha+d_1-d_0+2}}/A_E],
\end{align*}
thus $\mathfrak{N}_{\alpha}$ has $C_n^{n-d_1+d_0-2-\alpha}$ connected components  isomorphic to $\alpha$-dimensional projective stack $\hat{\mathbb{P}^\alpha}$,
  \item $N^{\mathrm{ind}}(d_0,d_0,\overrightarrow{d})$ is a disjoint union as
 \begin{align*}
  N^{\mathrm{ind}}(d_0,d_0,\overrightarrow{d})=&\coprod_{\alpha=0}^{n-3}\mathfrak{N}_{\alpha},
 \end{align*}
 for
 \begin{align*}
  \mathfrak{N}_{\alpha}=                           \coprod\limits_{D_0^{n-1-\alpha}}[U_{D_0^{n-1-\alpha}}/\mathrm{PGL}(2,\mathbb{C})],
\end{align*}
thus $ \mathfrak{N}_{\alpha}$ has $2^{n-1-\alpha}-1$ components isomorphic to the product of  $\alpha$ copies of the   projective stack $\hat{\mathbb{P}^1}$.
\end{itemize}

\end{theorem}

There is a coarse moduli space $M^{\mathrm{ind}}(d,\overrightarrow{d})$ associated to the moduli stack $N^{\mathrm{ind}}(d,\overrightarrow{d})$, which is a non-separated scheme. It is obvious that $N^{\mathrm{ind}}(d,\overrightarrow{d})\simeq N^{\mathrm{ind}}(d',\overrightarrow{d})$ iff $d\equiv d'$ ($\mathrm{mod}$ 2), in contrast, $M^{\mathrm{ind}}(d,\overrightarrow{d})\simeq M^{\mathrm{ind}}(d',\overrightarrow{d})$, which can be realized by elementary transformations (see Appendix \ref{aa}). Assume $\overrightarrow{\nu}\in R(d)$ is Kostov-generic,  since the parabolic logarithmic flat  bundles are stable with respect to any weight system $\overrightarrow{w}$, we have the moduli space $M(d,\overrightarrow{d},\overrightarrow{\nu})$ of parabolic logarithmic flat  bundles of type $(d,\overrightarrow{d})$ over $(C,D)$ with the spectrum $\overrightarrow{\nu}$.

\begin{theorem}[\cite{a}] 
Given Kostov-generic $\overrightarrow{\nu}\in R(d)$, $M(d,\overrightarrow{d},\overrightarrow{\nu})$ is of dimensions $2n-6$, and  the forgetful map defines a fibration
\begin{align*}
 \varphi:M(d,\overrightarrow{d},\overrightarrow{\nu})&\rightarrow M^{\mathrm{ind}}(d,\overrightarrow{d})\\
(E,\mathcal{L},\nabla)&\mapsto (E,\mathcal{L}),
\end{align*}
 whose fiber is an affine space of dimensions $n-3$.
\end{theorem}

\begin{proof}
Let\begin{align*}
 \mathcal{E}^0&=\{\Phi\in \mathcal{E}nd(E): \Tr(\Phi)=0\},\\
 \mathcal{E}^0_\mathcal{L}&=\{\Phi\in \mathcal{E}nd(E): \Tr(\Phi)=0, \Phi (L_i)\subset L_i, i=1,\cdots,n\},\\
  \mathcal{E}^1_\mathcal{L}&=\{\Phi\in \mathcal{E}nd(E)\otimes\Omega^1_C(\mathcal{D}): \Tr(\Phi)=0, {\mathrm {Res}}_{z_i}(\Phi)L_i=0, i=1,\cdots,n\}.
\end{align*}
Pick $(E,\mathcal{L})\in M^{\mathrm{ind}}(d,\overrightarrow{d})$, then by Proposition \ref{n}, the fiber of $\varphi$ at $(E,\mathcal{L})$ is an affine space isomorphic to $H^0(C,\mathcal{E}^1_\mathcal{L})\simeq H^1(C,\mathcal{E}^0_\mathcal{L})$. Since $H^0(C,\mathcal{E}^0_\mathcal{L})=0$, due to the Riemann--Roch theorem,
\begin{align*}
  \dim_\mathbb{C}H^1(C,\mathcal{E}^0_\mathcal{L})&=-\chi(\mathcal{E}^0_\mathcal{L})\\
  &=-\chi(\mathcal{E}^0)+n=n-3,
\end{align*}
where $\chi(\bullet)$ denotes the Euler characteristic  number of sheaves.
Therefore,
\begin{align*}
 \dim_\mathbb{C}M(d,\overrightarrow{d},\overrightarrow{\nu})&= \dim_\mathbb{C}H^1(C,\mathcal{E}^0_\mathcal{L})+ \dim_\mathbb{C}N^{\mathrm{ind}}(d,\overrightarrow{d})\\
 &=(n-3)+(n-3)=2n-6.
\end{align*}
We thus complete the proof.
\end{proof}

For the case $n=5, d=1$, the explicit construction of $M^{\mathrm{ind}}(1,\overrightarrow{d})$ by wall-crossing (variation of geometric invariant theory) is given in Appendix \ref{ab} (see also \cite{l}), and for the case $n=5, d=0$,  $M^{\mathrm{ind}}(0,\overrightarrow{d})$ is also studied in \cite{do}.

\begin{example}Assume $n=5$.
We choose a base point $x_0\in C$, and let $\gamma_i$ be the paths going from $x_0$ around $z_i$ and back, then $\gamma_1,\cdots,\gamma_5$ form the generators of the fundamental group $\pi_1(C,x_0)$. Let $\mathcal{M}(C,\mathrm{SL}(2,\mathbb{C}))$ be the moduli space of   semisimple representations $\rho:\pi_1(C,x_0)\rightarrow \mathrm{SL}(2,\mathbb{C})$ as a categorical quotient associated to the moduli stack $[\textrm{Rep}(\pi_1(C,x_0), \mathrm{SL}(2,\mathbb{C}))/\mathrm{PSL}(2,\mathbb{C}))]$, and let $M(C,\mathrm{SL}(2,\mathbb{C}))$ be the subvariety of $\mathcal{M}(C,\mathrm{SL}(2,\mathbb{C}))$ by imposing the conditions that $\Tr(\rho(\gamma_i))=0$, $i=1,\cdots,5$.
Simpson has showed that $M(C,\mathrm{SL}(2,\mathbb{C}))$ is a 2:1 ramified covering over the hypersurface $H\subset \mathbb{C}^5$ defined by the equation \cite{s2}
\begin{align*}
 f(x,y,x,u,v)=xyzuv+ (x^2y^2+y^2z^2+z^2u^2+u^2v^2+v^2x^2)-4(x^2+y^2+z^2+u^2+v^2)+16=0.
\end{align*}
Now define $\overrightarrow{\nu}\in R(1)$ by
 \begin{align*}
   (\nu_i^+,\nu_i^-)=\left\{
   \begin{array}{ll}
     (\frac{1}{4},-\frac{1}{4}), & \hbox{$i=1,\cdots,4$;} \\
     (-\frac{1}{4},-\frac{3}{4}), & \hbox{$i=5$,}
   \end{array}
 \right.
 \end{align*}
and $\overrightarrow{\tilde\nu}\in R(0)$ by
\begin{align*}
 (\tilde\nu_i^+,\tilde\nu_i^-)=(\frac{1}{4},-\frac{1}{4}),\  i=1,\cdots,5.
\end{align*}
 Obviously, both $\overrightarrow{\nu}$ and $\overrightarrow{\tilde\nu}$ are non-special. By the Riemann--Hilbert correspondence \cite{mk}, the moduli spaces  $M(1,\overrightarrow{d},\overrightarrow{\nu})$ and $M(0,\overrightarrow{d},\overrightarrow{\tilde\nu})$ are both analytically isomorphic to $M(C,\mathrm{SL}(2,\mathbb{C}))$. On the other hand, an algebraic isomorphism between $M(1,\overrightarrow{d},\overrightarrow{\nu})$ and $M(0,\overrightarrow{d},\overrightarrow{\tilde\nu})$ can be obtained by elementary transformations.
\end{example}

 For the cases of higher ranks, the corresponding moduli spaces are more complicated. In Appendix \ref{C}, we apply Katz's middle convolution algorithm to establish isomorphisms between moduli spaces of logarithmic flat bundles of different ranks.

\section{Parabolic Logarithmic Higgs Bundles}

\begin{definition}Let $C$ be a compact Riemann surface.
\begin{enumerate}

  \item A pair  $(E,\theta)$ is called a \emph{logarithmic Higgs
bundle} over $(C,D)$ if $\theta$ is a logarithmic Higgs field, i.e. $\theta\in H^0(C,\End(E)\otimes \Omega^1_C(\mathcal{D}))$.
Let $\overrightarrow{\mu^{(i)}}=(\mu^{(i)}_1,\cdots,\mu^{(i)}_r)$ be the eigenvalues (with algebraic multiplicities) of the residue $\mathrm{Res}_{z_i}(\theta)$ of the logarithmic Higgs field $\theta$ at the point $z_i\in D$. The collection $\{\overrightarrow{\mu^{(i)}}\}_{i=1,\cdots,n}$,
   which satisfies the Fuchs relation
\begin{align*}
\sum_{i=1}^n\sum_{\jmath=1}^r\mu^{(i)}_\jmath=0,
\end{align*}is called the \emph{spectrum } of $(E,\theta)$.
\item A pair  $(E,\theta)$ is called a \emph{parabolic logarithmic   Higgs
bundle} (resp. \emph{weakly parabolic logarithmic Higgs
bundle}) over $(C,D)$ if
\begin{itemize}
  \item $(E,\mathcal{L})$ is  a parabolic bundle over  $(C,D) $;
  \item $(E,\theta)$ is a logarithmic Higgs
bundle over $(C,D)$;
  \item $(E,\theta)$ is compatible with $(E,\mathcal{L})$, namely
  \begin{align*}
   (\mathrm{Res}_{z_i}(\theta)-\mu^{(i)}_\jmath\mathrm{Id})F^{(i)}_\jmath\subseteq F^{(i)}_{\jmath+1}
  \end{align*}
 such that the collection
 \begin{align*}
   \{\overrightarrow{\mu^{(i)}}=(\underbrace{\mu^{(i)}_0,\cdots,\mu^{(i)}_0}_{d^{(i)}_0},\cdots,\underbrace{\mu^{(i)}_{\ell^{(i)}},\cdots,\mu^{(i)}_{\ell^{(i)}}}_{d^{(i)}_{\ell^{(i)}}})\}_{i=1,\cdots,n}
 \end{align*}
is the spectrum of $(E,\theta)$. In particular, if all $\mu^{(i)}_\jmath$'s vanish,  $(E,\mathcal{L},\theta)$ is called a \emph{strongly  parabolic logarithmic Higgs bundle}.

(resp. $(E,\theta)$ is weakly compatible with $(E,\mathcal{L})$, namely $\mathrm{Res}_{z_i}(\theta)$ preserves $\mathcal{F}^{(i)}$
  \begin{align*}
   (\mathrm{Res}_{z_i}(\theta))F^{(i)}_\jmath\subset F^{(i)}_\jmath
  \end{align*}
with eigenvalues $_\jmath\mu^{(i)}_1,\cdots, {_\jmath\mu^{(i)}_{d^{(i)}_\jmath}}$ (by algebraic multiplicities) on $\mathrm{Gr}_\jmath^{\mathcal{F}^{(i)}}$ such that the collection 
 \begin{align*}
   \{\overrightarrow{\mu^{(i)}}=(_0\mu^{(i)}_1,\cdots, {_0\mu^{(i)}_{d^{(i)}_0}},\cdots, {_{\ell^{(i)}}\mu^{(i)}_1},\cdots, {_{\ell^{(i)}}\mu^{(i)}_{d^{(i)}_{\ell^{(i)}}}})\}_{i=1,\cdots,n}
 \end{align*}
is the spectrum of $(E,\theta)$. In particular, if all $_\jmath\mu^{(i)}_a$'s vanish,  $(E,\mathcal{L},\theta)$ is called a \emph{nilpotent parabolic logarithmic Higgs bundle}.) \end{itemize}

  \item
      Given a  weight system $\{\overrightarrow{w^{(i)}}\}_{i=1,\cdots,n}$, a (weakly) parabolic logarithmic Higgs bundle $(E,\mathcal{L},\theta)$ over $(C,D)$ is called \emph{$\{\overrightarrow{w^{(i)}}\}_{i=1,\cdots,n}$-stable} (resp. \emph{$\{\overrightarrow{w^{(i)}}\}_{i=1,\cdots,n}$-semistable}), if for any proper  subbundle $(E',\mathcal{L}')$ of $(E,\mathcal{L})$ with $E'$ being preserved by the  logarithmic Higgs field  $\theta$, the following inequality holds
       \begin{align*}
        \frac{\mathrm{ pdeg}(E',\mathcal{L}')}{r'}<(\textrm{resp.} \leq)\  \frac{\mathrm{ pdeg}(E,\mathcal{L})}{r}.
       \end{align*}
       \end{enumerate}
\end{definition}

We introduce the following notations of moduli spaces:
\begin{itemize} 
    \item $WM(d,\{\overrightarrow{d^{(i)}}\}_{i=1,\cdots,n}, \{\overrightarrow{w^{(i)}}\}_{i=1,\cdots,n},\{\overrightarrow{\nu^{(i)}}\}_{i=1,\cdots,n})$: the moduli space  of $\{\overrightarrow{w^{(i)}}\}_{i=1,\cdots,n}$-stable  weakly parabolic logarithmic  flat bundles of type $(d, \{\overrightarrow{d^{(i)}}\}_{i=1,\cdots,n})$ and parabolic degree zero over $(C,D)$ with spectrum $\{\overrightarrow{\nu^{(i)}}\}_{i=1,\cdots,n}$;
    \item $WH(d,\{\overrightarrow{d^{(i)}}\}_{i=1,\cdots,n},\{\overrightarrow{w^{(i)}}\}_{i=1,\cdots,n},\{\overrightarrow{\mu^{(i)}}\}_{i=1,\cdots,n})$:  the  moduli space of  $\{\overrightarrow{w^{(i)}}\}_{i=1,\cdots,n}$-stable  weakly parabolic logarithmic  Higgs bundles  of type $(d, \{\overrightarrow{d^{(i)}}\}_{i=1,\cdots,n})$  and parabolic degree zero  over $(C,D)$ with the spectrum $\{\overrightarrow{\mu^{(i)}}\}_{i=1,\cdots,n}$;
    \item  $H(d,\{\overrightarrow{d^{(i)}}\}_{i=1,\cdots,n},\{\overrightarrow{w^{(i)}}\}_{i=1,\cdots,n},\{\overrightarrow{\mu^{(i)}}\}_{i=1,\cdots,n})$:  the  moduli space of  $\{\overrightarrow{w^{(i)}}\}_{i=1,\cdots,n}$-stable  parabolic logarithmic  Higgs bundles  of type $(d, \{\overrightarrow{d^{(i)}}\}_{i=1,\cdots,n})$  and parabolic degree zero  over $(C,D)$ with the spectrum $\{\overrightarrow{\mu^{(i)}}\}_{i=1,\cdots,n}$;
     \item $NH(d,\{\overrightarrow{d^{(i)}}\}_{i=1,\cdots,n},\{\overrightarrow{w^{(i)}}\}_{i=1,\cdots,n})$: the  moduli space of  $\{\overrightarrow{w^{(i)}}\}_{i=1,\cdots,n}$-stable nilpotent  parabolic logarithmic  Higgs bundles  of type $(d, \{\overrightarrow{d^{(i)}}\}_{i=1,\cdots,n})$  and parabolic degree zero  over $(C,D)$;
    \item $SH(d,\{\overrightarrow{d^{(i)}}\}_{i=1,\cdots,n},\{\overrightarrow{w^{(i)}}\}_{i=1,\cdots,n})$: the  moduli space of  $\{\overrightarrow{w^{(i)}}\}_{i=1,\cdots,n}$-stable strongly   parabolic logarithmic  Higgs bundles  of type $(d, \{\overrightarrow{d^{(i)}}\}_{i=1,\cdots,n})$  and parabolic degree zero  over $(C,D)$.
\end{itemize}

\begin{theorem}[\cite{y,y1,by}]
 The moduli space $WH(d,\{\overrightarrow{d^{(i)}}\}_{i=1,\cdots,n},\{\overrightarrow{w^{(i)}}\}_{i=1,\cdots,n},\{\overrightarrow{\mu^{(i)}}\}_{i=1,\cdots,n})$  with  $_\jmath\mu^{(i)}_1=\cdots={_\jmath\mu^{(i)}_{d^{(i)}_\jmath}}$ for any $i, \jmath$ is an irreducible normal quasi-projective variety of dimension $r^2(2g-2+n)+2-nr$,  and  the moduli space $H(d,\{\overrightarrow{d^{(i)}}\}_{i=1,\cdots,n},\{\overrightarrow{w^{(i)}}\}_{i=1,\cdots,n},\{\overrightarrow{\mu^{(i)}}\}_{i=1,\cdots,n})$ is an irreducible normal  subvariety of $WH(d,\{\overrightarrow{d^{(i)}}\}_{i=1,\cdots,n},\{\overrightarrow{w^{(i)}}\}_{i=1,\cdots,n},\{\overrightarrow{\mu^{(i)}}\}_{i=1,\cdots,n})$ with
dimension $r^2(2g-2+n)+2-\sum\limits_{i=1}^n\sum\limits_{\jmath=0}^{\ell^{(i)}}(d_\jmath^{(i)})^2$.
\end{theorem}

\begin{remark}
We also have the  moduli space of  $\{\overrightarrow{w^{(i)}}\}_{i=1,\cdots,n}$-stable  weakly parabolic logarithmic  Higgs bundles  of type $(d, \{\overrightarrow{d^{(i)}}\}_{i=1,\cdots,n})$  and parabolic degree zero  over $(C,D)$ without fixing the spectrum, which contains $WH(d,\{\overrightarrow{d^{(i)}}\}_{i=1,\cdots,n},\{\overrightarrow{w^{(i)}}\}_{i=1,\cdots,n},\{\overrightarrow{\nu^{(i)}}\}_{i=1,\cdots,n})$ as a subvariety. It is an irreducible normal quasi-projective variety of dimension $r^2(2g-2+n)+1$. However, this moduli space is not a suitable object regarding the nonabelian Hodge correspondence.
\end{remark}

The connection between parabolic logarithmic  Higgs bundles and parabolic logarithmic  flat bundles is established by the nonabelian Hodge correspondence (see Appendix \ref{F}). In particular, the nonabelian Hodge correspondence provides  diffeomorphisms between certain moduli spaces. Let us define
\begin{align*}
   {_\jmath v^{(i)}_a}:=w^{(i)}_\jmath-2\mathrm{Re}({_\jmath\mu^{(i)}_{a}})-{_\jmath n^{(i)}_a},\ i=1,\cdots,n, a=1,\dots, d^{(i)}_\jmath, \jmath=0,\cdots, \ell^{(i)},
\end{align*}where
\begin{align*}
  {_\jmath n^{(i)}_a}=[w^{(i)}_\jmath-2\mathrm{Re}({_\jmath\mu^{(i)}_{a}})],
\end{align*}
thus $0\leq  {_\jmath v^{(i)}_a}<1$.
Picking all distinct $ _\jmath v^{(i)}_a$'s and arranging them in ascending order to produce a vector $\overrightarrow{t^{(i)}}=(t^{(i)}_0,\cdots, t^{(i)}_{\wp^{(i)}})$,
we define
\begin{align*}
  S(t^{(i)}_p)&=\{(\jmath,a): {_\jmath v^{(i)}_a}=t^{(i)}_p\}, \ p=0,\cdots, \wp^{(i)}, \\
  f^{(i)}_p&=|S(t^{(i)}_p)|,\ \overrightarrow{f^{(i)}}=(f^{(i)}_0,\cdots,f^{(i)}_{\wp^{(i)}})\\
  t&=d+\sum_{i=1}^n\sum_{(\jmath,a)\in  S(t^{(i)}_p)}{_\jmath n^{(i)}_a},\\
 \{{_p\nu^{(i)}_1},\cdots, {_p\nu^{(i)}_{f^{(i)}_p}}\}
&= \{
w^{(i)}_\jmath+2\sqrt{-1}\mathrm{Im}({_\jmath\mu^{(i)}_a})-{_\jmath n^{(i)}_a}: (\jmath,a)\in  S(t^{(i)}_p)\},\\\overrightarrow{\nu^{(i)}}&=(_0\nu^{(i)}_1,\cdots, {_0\nu^{(i)}_{f^{(i)}_0}},\cdots, {_{\ell^{(i)}}\nu^{(i)}_1},\cdots, {_{\ell^{(i)}}\nu^{(i)}_{f^{(i)}_{\wp^{(i)}}}}).
\end{align*}
By virtue of  tame harmonic bundles, there is a map from 
$WH(d,\{\overrightarrow{d^{(i)}}\}_{i=1,\cdots,n},\{\overrightarrow{w^{(i)}}\}_{i=1,\cdots,n},\{\overrightarrow{\mu^{(i)}}\}_{i=1,\cdots,n})$ to $WM(t,\{\overrightarrow{f^{(i)}}\}_{i=1,\cdots,n}, \{\overrightarrow{t^{(i)}}\}_{i=1,\cdots,n},\{\overrightarrow{\nu^{(i)}}\}_{i=1,\cdots,n})$.

\begin{theorem}[\cite{s,t,t1,bq,sz}]Assume  $_\jmath\mu^{(i)}_1=\cdots={_\jmath\mu^{(i)}_{d^{(i)}_\jmath}}$ for any $i,\jmath$.
The nonabelian Hodge correspondence provides the following diffeomorphism of moduli spaces
\begin{align*}
    WH(d,\{\overrightarrow{d^{(i)}}\}_{i=1,\cdots,n},\{\overrightarrow{w^{(i)}}\}_{i=1,\cdots,n},\{\overrightarrow{\mu^{(i)}}\}_{i=1,\cdots,n})\simeq WM(t,\{\overrightarrow{f^{(i)}}\}_{i=1,\cdots,n}, \{\overrightarrow{t^{(i)}}\}_{i=1,\cdots,n},\{\overrightarrow{\nu^{(i)}}\}_{i=1,\cdots,n}),
\end{align*}
In particular, we have the  diffeomorphism 
of moduli spaces 
\begin{align*}
    NH(d,\{\overrightarrow{d^{(i)}}\}_{i=1,\cdots,n},\{\overrightarrow{w^{(i)}}\}_{i=1,\cdots,n})\simeq WM(d,\{\overrightarrow{d^{(i)}}\}_{i=1,\cdots,n}, \{\overrightarrow{w^{(i)}}\}_{i=1,\cdots,n},\{\overrightarrow{w^{(i)}}\}_{i=1,\cdots,n}),
\end{align*}
which restricts to give the  following diffeomorphism of moduli spaces 
\begin{align*}
    SH(d,\{\overrightarrow{d^{(i)}}\}_{i=1,\cdots,n},\{\overrightarrow{w^{(i)}}\}_{i=1,\cdots,n})\simeq M(d,\{\overrightarrow{d^{(i)}}\}_{i=1,\cdots,n}, \{\overrightarrow{w^{(i)}}\}_{i=1,\cdots,n},\{\overrightarrow{w^{(i)}}\}_{i=1,\cdots,n}).
\end{align*}
\end{theorem}

As an interpolation of flat connections and Higgs fields, Deligne introduced the so-called flat $\lambda$-connections, a vector bundle together with a flat $\lambda$-connection is called a $\lambda$-flat bundle \cite{HH22a,HH22b}. This notion also works in the context of parabolic bundles \cite{mk,t1,lm,m}. Then there is a $\mathbb{C}^\times$-action on the moduli space of parabolic logarithmic $\lambda$-flat bundles, as Simpson has done, one can consider the limit behavior of this action. The following proposition generalizes a result of Simpson \cite{s1}.

\begin{proposition}(\cite[Lemma 3.12]{k})
For any parabolic logarithmic flat bundle $(E,\nabla,\mathcal{L})\in M(d,\{\overrightarrow{d^{(i)}}\}_{i=1,\cdots,n},\{\overrightarrow{w^{(i)}}\}_{i=1,\cdots,n},\{\overrightarrow{\nu^{(i)}}\}_{i=1,\cdots,n})$,
the limit \begin{align*}
 \lim\limits_{c\rightarrow0}c\cdot (E,\mathcal{L},\nabla)= \lim\limits_{c\rightarrow0} (E,\mathcal{L},c\nabla)
\end{align*}
exists as a strongly  parabolic logarithmic Higgs bundle lying in the locus of fixed points of the  $\mathbb{C}^\times$-action on Higgs fields in the moduli space
$SH(d,\{\overrightarrow{d^{(i)}}\}_{i=1,\cdots,n},\{\overrightarrow{w^{(i)}}\}_{i=1,\cdots,n})$.
\end{proposition}

Let us return to the case of parabolic bundles of type $(d,\overrightarrow{d})$ over $(C,D)$ for $C\simeq \mathbb{P}^1$.

\begin{definition}\label{zs}
The weight system $\overrightarrow{w}$ on the parabolic bundle $(E,\mathcal{L})$  of type $(d,\overrightarrow{d})$ over $(C,D)$ is called
\begin{itemize}
  \item \emph{Kostov-generic}, if for any $\epsilon_i\in \{1,-1\}$, $i=1,\cdots, n$, we have
\begin{align*}
\frac {d+\sum\limits_{i=1}^n\epsilon_iw_i}{2}\notin\mathbb{Z};
\end{align*}
    \item \emph{non-resonant}, if
\begin{align*}
  0<w_i<1, \ \ i=1,\cdots,n;
\end{align*}
    \item \emph{non-special}, if it is both Kostov-generic and non-resonant.
\end{itemize}
\end{definition}
It is obvious that for a Kostov-generic weight system $\overrightarrow{w}$, the parabolic bundle   $(E, \mathcal{L})$    is $\overrightarrow{w}$-semistable iff it is $\overrightarrow{w}$-stable. We always assume the weight system $\overrightarrow{w}$ is non-special.
  A strongly parabolic logarithmic Higgs bundle
 $(E,\mathcal{L},\Theta)\in SH(d,\overrightarrow{d},\overrightarrow{w})$ is called a $\mathbb{C}^\times$-fixed point if for any constant $c\in\mathbb{C}^\times$, there is an automorphism $\sigma_c$ of $E$ such that
 \begin{align*}
   \left\{
     \begin{array}{ll}
       \sigma_c\cdot \mathcal{L}=\mathcal{L},\\
       \sigma_c\cdot\theta\cdot \sigma_c^{-1}=c\theta.
     \end{array}
   \right.
 \end{align*}
 Denote by $\mathrm{Fix}(d, \overrightarrow{d},\overrightarrow{w})$ the locus of $\mathbb{C}^\times$-fixed points lying in the moduli space
$SH(d,\overrightarrow{d},\overrightarrow{w})$.

\begin{proposition}\label{az} 
Let $(E,\mathcal{L},\theta)\in \mathrm{Fix}(d, \overrightarrow{d},\overrightarrow{w})$.
 \begin{enumerate}
   \item The logarithmic  Higgs field $\theta$ is non-zero iff the underlying parabolic bundle $(E,\mathcal{L})$ is not $\overrightarrow{w}$-stable.
   \item Let $w=\sum\limits_{i\in  \widehat{D_\infty}}w_i-\sum\limits_{i\in  D_\infty}w_i$, then the bundle $E$ takes one of the following forms
   \begin{itemize}
     \item $
    \mathcal{O}_{\mathbb{P}^1}(d-k-[\frac{d+w}{2}])\oplus\mathcal{O}_{\mathbb{P}^1}([\frac{d+w}{2}]+k)
 $
 for some integer $k\in (0, [\frac{d+n-2}{2}]-[\frac{d+w}{2}]]$,
  \item $\mathcal{O}_{\mathbb{P}^1}(d-k-[\frac{d-1}{2}])\oplus\mathcal{O}_{\mathbb{P}^1}([\frac{d-1}{2}]+k)
 $
 for  some  integer $k\in (0,[\frac{d+w}{2}]-[\frac{d-1}{2}]]$,
 \item $\mathcal{O}_{\mathbb{P}^1}(d-k-[\frac{d-1}{2}])\oplus\mathcal{O}_{\mathbb{P}^1}([\frac{d-1}{2}]+k)
 $
 for some integer $k\in (0, [\frac{d+n-2}{2}]-[\frac{d-1}{2}]]$.
   \end{itemize}
 \end{enumerate}

\end{proposition}
\begin{proof}(1) Assume $E\simeq\mathcal{O}_{\mathbb{P}^1}(d_0)\oplus \mathcal{O}_{\mathbb{P}^1}(d_1)$ with $d_0\leq d_1$,  and  the parabolic structure $\mathcal{L}=\{L_i\}_{i=1,\cdots,n}$ is parameterized by $(u_1,\cdots,u_n)\in (\mathbb{P}^1)^n$ with $u_i=[\kappa_i:\lambda_i]$. Since $(E,\mathcal{L},\theta)$ is a $\mathbb{C}^\times$-fixed point with nonzero $\theta$,  we must have
\begin{itemize}
  \item $\theta=\left(
            \begin{array}{cc}
              0 & \Theta \\
              0 & 0 \\
            \end{array}
          \right)$ or $\theta=\left(
            \begin{array}{cc}
              0 & 0 \\
              \Phi & 0 \\
            \end{array}
          \right)$
with  non-zero morphism $\Theta: \mathcal{O}_{\mathbb{P}^1}(d_1)\rightarrow \mathcal{O}_{\mathbb{P}^1}(d_0)\otimes \Omega^1_C(\mathcal{D})\simeq\mathcal{O}_{\mathbb{P}^1}(n-2-d_0)$ or $\Phi: \mathcal{O}_{\mathbb{P}^1}(d_0)\rightarrow \mathcal{O}_{\mathbb{P}^1}(d_1)\otimes \Omega^1_C(\mathcal{D})\simeq\mathcal{O}_{\mathbb{P}^1}(n-2-d_1)$ in terms of the decomposition of $E$,
  \item either $L_i\simeq\mathcal{O}_{\mathbb{P}^1}(d_0)|_{z_i}$ or $L_i\simeq\mathcal{O}_{\mathbb{P}^1}(d_1)|_{z_i}$.
\end{itemize}
Therefore,  the parabolic bundle
$(E,\mathcal{L})$ is decomposable, hence it is not $\overrightarrow{w}$-stable.

(2) There are two cases.
\begin{description}
  \item[Case I $\theta=0$] $(E,\mathcal{L})$ is  $\overrightarrow{w}$-stable, which implies  the inequality
\begin{align*}
 d_1-d_0<\sum_{i\in  \widehat{D_\infty}}w_i-\sum_{i\in  D_\infty}w_i=w.
\end{align*}
 On the other hand, $(E,\mathcal{L})$ must be indecomposable, hence it can be endowed with a logarithmic connection $\nabla$ such that $(E,\mathcal{L},\nabla)$ is a parabolic logarithmic flat bundle with Kostov-generic spectrum. Then  the  logarithmic flat bundle $(E,\nabla)$ is irreducible, which implies $ d_1-d_0\leq n-2$. Therefore, we have
 \begin{align*}
   0\leq d_1-d_0\leq \omega=\min\{n-2,w\},
 \end{align*}
 together with $\frac{d+w}{2}\notin \mathbb{Z}$ leads to
\begin{align*}
   E\simeq \mathcal{O}_{\mathbb{P}^1}(d-k-[\frac{d-1}{2}])\oplus\mathcal{O}_{\mathbb{P}^1}([\frac{d-1}{2}]+k)
 \end{align*}
 for  some  integer $k\in (0,[\frac{d+\omega}{2}]-[\frac{d-1}{2}]]$.
  \item[Case I $\theta \neq 0$] It is divided into two subcases.
  \begin{description}
  \item[Subcase I $\theta=\left(
            \begin{array}{cc}
              0 & \Theta \\
              0 & 0 \\
            \end{array}
          \right)$] $\Theta\neq 0$ means that $d_1-d_0\leq n-2$ and $\overrightarrow{w}$-stability of $(E,\mathcal{L},\theta)$ yields  the inequality
$d_1-d_0>w$. Therefore
\begin{itemize}
  \item if $w\geq 0$, we have
  \begin{align*}
   E\simeq \mathcal{O}_{\mathbb{P}^1}(d-k-[\frac{d+w}{2}])\oplus\mathcal{O}_{\mathbb{P}^1}([\frac{d+w}{2}]+k)
 \end{align*}
 for some integer $k\in (0, [\frac{d+n-2}{2}]-[\frac{d+w}{2}]]$,
  \item if $w<0$, we have
  \begin{align*}
   E\simeq \mathcal{O}_{\mathbb{P}^1}(d-k-[\frac{d-1}{2}])\oplus\mathcal{O}_{\mathbb{P}^1}([\frac{d-1}{2}]+k)
 \end{align*}
 for some integer $k\in (0, [\frac{d+n-2}{2}]-[\frac{d-1}{2}]]$.
\end{itemize}

  \item[Subcase II $\theta=\left(
            \begin{array}{cc}
              0 & 0 \\
              \Phi & 0 \\
            \end{array}
          \right)$] $\overrightarrow{w}$-stability of $(E,\mathcal{L},\theta)$ yields  the inequality
$d_1-d_0<w$, hence
\begin{align*}
   E\simeq \mathcal{O}_{\mathbb{P}^1}(d-k-[\frac{d-1}{2}])\oplus\mathcal{O}_{\mathbb{P}^1}([\frac{d-1}{2}]+k)
 \end{align*}
 for  some  integer $k\in (0,[\frac{d+w}{2}]-[\frac{d-1}{2}]]$.
\end{description}
\end{description}
Therefore, we complete the proof.
\end{proof}

\begin{corollary}\label{sss}
Assume $n=|D|=5$. If $(E,\mathcal{L},\theta)\in \mathrm{Fix}(1, \overrightarrow{d},\overrightarrow{w})$, we have
$E\simeq B:=\mathcal{O}_{\mathbb{P}^1}\oplus\mathcal{O}_{\mathbb{P}^1}(1)$ or $E\simeq B':=\mathcal{O}_{\mathbb{P}^1}(-1)\oplus\mathcal{O}_{\mathbb{P}^1}(2)$.
\end{corollary}

As an application of $\mathbb{C}^\times$-limit of parabolic logarithmic flat bundles, we have the following theorem, which describes the non-stable zones of weight system on certain parabolic bundles.

\begin{theorem}\label{ss}Assume $n=|D|=5$.
\begin{enumerate}
  \item Given a non-special weight system $\overrightarrow{w}$, suppose one of the following  conditions is satisfied
\begin{itemize}
  \item $\sum\limits_{i=1}^5w_i<1$;
   \item there is a subset $D_1\subset D$ with $|D_1|=2$ such that 
        \begin{align*}
             \sum_{z_i\in D_1}w_i-\sum_{z_i\in D\backslash{D_1}}w_i>1;
        \end{align*}
\end{itemize}
then for any parabolic structure $\mathcal{L}$ on $B$, the parabolic bundle $(B,\mathcal{L})$ cannot be $\overrightarrow{w}$-stable.
  \item Given a non-special weight system $\overrightarrow{w}$, suppose one of the above two conditions and the following condition are satisfied
  \begin{itemize}
  \item there is a point $z_j\in D$ such that
        \begin{align*}
             -w_j+ \sum_{z_i\in D\backslash\{z_j\}}w_i>3,
        \end{align*}
\end{itemize}
  then for any parabolic structure $\mathcal{L}$ on $B$ with $n_\infty=|D_\infty|\leq1$, the parabolic bundle $(B,\mathcal{L})$ cannot be $\overrightarrow{w}$-stable.
  \item Given a non-special weight $\overrightarrow{w}$, suppose one of the following two conditions is satisfied
\begin{itemize}
  \item $\sum\limits_{i=1}^5w_i<3$;
   \item there is a point $z_j\subset D$ such that 
          \begin{align*}
             -w_j+\sum_{z_i\in D\backslash\{z_j\}}w_i>3,
          \end{align*}
\end{itemize}
then for any parabolic structure $\mathcal{L}$ on $B'$, the parabolic bundle $(B',\mathcal{L})$ cannot be $\overrightarrow{w}$-stable.
\end{enumerate}

\end{theorem}
\begin{proof}
A $\overrightarrow{w}$-stable parabolic bundle $(B,\mathcal{L})$ admits a logarithmic connection $\nabla$ with the  Kostov-generic spectrum $\overrightarrow{\nu}$. Obviously, lying in the moduli spaces, we have the limit
\begin{align*}
   \lim\limits_{c\rightarrow0}c\cdot (B,\mathcal{L},\nabla)=(B,\mathcal{L},0)\in \mathrm{Fix}(1,\overrightarrow{d}, \overrightarrow{w}),
\end{align*}
where $0$ on the right hand side denotes the zero Higgs field.
The logarithmic flat bundle $(B,\nabla)$ is irreducible, hence
one decomposes  $\nabla$ as $\nabla=\left(
                                                                                            \begin{array}{cc}
                                                                                              \nabla_1 & \theta \\
                                                                                              \xi & \nabla_2 \\
                                                                                            \end{array}
                                                                                          \right)$ with non-zero morphisms $\theta: \mathcal{O}_{\mathbb{P}^1}(1)\rightarrow \mathcal{O}_{\mathbb{P}^1}\otimes \Omega^1_C(\mathcal{D})\simeq\mathcal{O}_{\mathbb{P}^1}(3)$ and  $\xi: \mathcal{O}_{\mathbb{P}^1}\rightarrow \mathcal{O}_{\mathbb{P}^1}(1)\otimes \Omega^1_C(\mathcal{D})\simeq\mathcal{O}_{\mathbb{P}^1}(4)$.

 i) $L_i$ parameterized by $u_i=[\kappa_i:\lambda_i]\in \mathbb{P}^1$ is the eigenspace of $\mathrm{Res}_{z_i}(\nabla)$ with eigenvalue $\nu^+_i$.  Firstly, we claim that   if $u_i=\infty$, then $\theta|_{z_i}=0$. Indeed, the logarithmic connection can be expressed locally as
  \begin{align*}
    \nabla=
   d+\sum_{i=1}^5\frac{1}{z-z_i}\left(
             \begin{array}{cc}
               A_i^{(11)} &  A_i^{(12)} \\
                A_i^{(21)} &  A_i^{(22)} \\
             \end{array}
           \right)dz,
  \end{align*}
    where  the constraints on the matrix components of $A^{(i)}$ are given by
   \begin{align*}
    \left\{
      \begin{array}{ll}
        \sum\limits_{i=1}^5A_i^{(11)}=0, \\
        \sum\limits_{i=1}^5A_i^{(22)}=-1, \\
        \sum\limits_{i=1}^5A^{(12)}_i=0,\\
        A_i^{(11)}+A_i^{(22)}=\nu^+_i+\nu^-_i,\\
         A_i^{(11)}A_i^{(22)}- A_i^{(12)}A_i^{(21)}=\nu_i^+\nu_i^-.
      \end{array}
    \right.
   \end{align*}
   Hence, the eigenspace of the matrix $\left(
             \begin{array}{cc}
               A_i^{(11)} &  A_i^{(12)} \\
                A_i^{(21)} &  A_i^{(22)} \\
             \end{array}
           \right)$ with eigenvalue $\nu_i^+$ is
           \begin{align*}
            \textrm{ either }\ \mathbb{C}\cdot\left(
                              \begin{array}{c}
                                 A_i^{(12)} \\
                                \nu_i^+- A_i^{(11)}  \\
                              \end{array}
                            \right),\ \textrm{ or }\  \mathbb{C}\cdot\left(
                              \begin{array}{c}
                                \nu_i^+- A_i^{(22)}  \\
                               A_i^{(21)}  \\
                              \end{array}
                            \right).
           \end{align*}
          It follows that if $u_i=\infty$,  we  must have $  A_i^{(12)}=0$, $A_i^{(11)}=\nu_i^-$ and $  A_i^{(22)}=\nu_i^+$, which means that $\theta|_{z_i}=0$. Similarly, if $u_i=0$,  then $\xi|_{z_i}=0$.

ii) Taking a family of automorphisms
\begin{align*}
 \sigma_c=\left(
       \begin{array}{cc}
         c^{-1} & 0 \\
         0 & 1 \\
       \end{array}
     \right)
\end{align*}
of $B$,
 we get the limit
\begin{align*}
 \lim\limits_{c\rightarrow0}\sigma_c\cdot( B,\mathcal{L}, c\nabla )= \mathbb{E}_0:=(B,\mathcal{L'},\left(
                                                                                            \begin{array}{cc}
                                                                                             0 & \theta \\
                                                                                             0 & 0 \\
                                                                                            \end{array}
                                                                                          \right) ),
\end{align*}
where the parabolic structure $\mathcal{L'}=\{L'_i\}_{i=1,\cdots,5}$ is given by
\begin{align*}
 L'_i=\left\{
        \begin{array}{ll}
          \mathcal{O}_{\mathbb{P}^1}(1)|_{z_i}, & \hbox{$u_i=\infty$;} \\
          \mathcal{O}_{\mathbb{P}^1}|_{z_i}, & \hbox{$u_i\neq\infty$.}
        \end{array}
      \right.
\end{align*}
If $\sum\limits_{i=1}^5w_i<1$, as a strongly parabolic logarithmic Higgs bundle, $\mathbb{E}_0$ is  $\overrightarrow{w}$-stable, thus $\mathbb{E}_0$ lies in $\mathrm{Fix}(1,\overrightarrow{d}, \overrightarrow{w})$. This contradiction means that when $\sum\limits_{i=1}^5w_i<1$, any parabolic bundle $(B,\mathcal{L})$ cannot be $\overrightarrow{w}$-stable.

iii) Similarly, picking a family of automorphisms
\begin{align*}
  \sigma'_c=\left(
       \begin{array}{cc}
         1 & 0 \\
         0 & c^{-1} \\
       \end{array}
     \right)
\end{align*}
of $B$,
  we have the limit
\begin{align*}
 \lim\limits_{c\rightarrow0}\sigma'_c\cdot (B,\mathcal{L},  c\nabla)= \mathbb{E}_1:=(B,\mathcal{L''},\left(
                                                                                            \begin{array}{cc}
                                                                                             0 & 0 \\
                                                                                             \xi & 0 \\
                                                                                            \end{array}
                                                                                          \right) ),
\end{align*}
where the parabolic structure $\mathcal{L''}=\{L''_i\}_{i=1,\cdots,5}$ is given by
\begin{align*}
 L''_i=\left\{
        \begin{array}{ll}
          \mathcal{O}_{\mathbb{P}^1}(1)|_{z_i}, & \hbox{$u_i\neq0$;} \\
          \mathcal{O}_{\mathbb{P}^1}|_{z_i}, & \hbox{$u_i=0$.}
        \end{array}
      \right.
\end{align*}
 Let $D_0=\{z_i\in D:L_i=\mathcal{O}_{\mathbb{P}^1}|_{z_i}\}$. We can assume $|D_0|\geq2, |D_\infty|\leq 2$.
We check the second condition in (1)  with the  following three cases.
 \begin{description}
 \item[Case I  $D_1\bigcap  D_\infty=\emptyset$] we can always suppose $D_1\subseteq D_0$, then
\begin{align*}
  \sum_{z_i\in D_0}w_i-\sum_{z_i\in D\backslash{D_0}}w_i\geq \sum_{z_i\in D_1}w_i-\sum_{z_i\in D\backslash{D_1}}w_i>1.
\end{align*}
Therefore, $\mathbb{E}_1$ is a $\overrightarrow{w}$-stable strongly parabolic logarithmic Higgs bundle, which is  a contradiction.
  \item[Case II $D_1= D_\infty$] Since
\begin{align*}
  \sum_{z_i\in D_\infty}w_i-\sum_{z_i\in \widehat{D_\infty}}w_i>1>-1,
\end{align*}
 $\mathbb{E}_0$ turns out to be $\overrightarrow{w}$-stable, which is  a contradiction.
  \item[Case III $|D_1\bigcap  D_\infty|=1$]Assume $D_1\backslash(D_1\bigcap  D_\infty)=z_k$, then
\begin{align*}
  \sum_{z_i\in D_\infty}w_i-\sum_{z_i\in \widehat{D_\infty}}w_i\geq \sum_{z_i\in D_1}w_i-\sum_{z_i\in D\backslash{D_1}}w_i-2w_k>1-2w_k>-1,
\end{align*}
thus $\mathbb{E}_0$ is still $\overrightarrow{w}$-stable, which is  a contradiction.
\end{description}

iv)
To check  the third  condition in (2), we consider the short exact sequence
\begin{align*}
  0\rightarrow \mathcal{O}_{\mathbb{P}^1}(-1)\xrightarrow{\alpha} B\rightarrow  \mathcal{O}_{\mathbb{P}^1}(2)\rightarrow0,
\end{align*}
where the morphism $\alpha:\mathcal{O}_{\mathbb{P}^1}(-1)\hookrightarrow B$ described by $(q_0+q_1z, r_0+r_1z+r_2z^2)$ with $(q_0,q_1)\neq(0,0), (r_0,r_1,r_2)\neq(0,0,0)$. By a chosen $C^\infty$-splitting $\beta:\mathcal{O}_{\mathbb{P}^1}(2)\rightarrow B$, one decomposes the holomorphic structure $\bar\partial_B$ on $B$ as 
$\bar\partial_B=\left(
            \begin{array}{cc}
                 \bar\partial_{1} & \zeta\\
                 0& \bar\partial_{2}  \\
            \end{array}
                 \right)
$ and decomposes  $\nabla$ as 
$\nabla=\left(
            \begin{array}{cc}
                 \nabla'_1 & \theta' \\
                 \xi' & \nabla'_2 \\
            \end{array}
        \right)$. Then the holomorphicity of $\nabla$, i.e. $\bar\partial_B\circ\nabla=\nabla\circ\bar\partial_B$, gives rise to the relations 
        \begin{align*}
            \left\{
               \begin{array}{ll}
                   \bar\partial_{1}\circ\nabla'_1-\nabla'_1\circ\bar\partial_{1}+\zeta\circ\xi'=0, \\ \bar\partial_{2}\circ\nabla'_2-\nabla'_2\circ\bar\partial_{2}+\xi'\circ\zeta=0, \\ \bar\partial_{1}\circ\theta' -\theta'\circ\bar\partial_{2} +\zeta\circ\nabla'_2-\nabla'_1\circ\zeta=0,\\ \bar\partial_{2}\circ\xi'-\xi'\circ\bar\partial_{1}=0.
                \end{array}
             \right.
        \end{align*}This implies the holomorphic morphism $\xi':\mathcal{O}_{\mathbb{P}^1}(-1)\rightarrow \mathcal{O}_{\mathbb{P}^1}(2)\otimes \Omega^1_C(\mathcal{D})\simeq\mathcal{O}_{\mathbb{P}^1}(5)$ is non-zero, otherwise the logarithmic flat bundle  $(E,\nabla)$ is not irreducible.
There is a family of $C^\infty$-automorphisms
\begin{align*}
 \sigma''_c=\left(
                                        \begin{array}{cc}
                                          c & c \\
                                          0 & 1 \\
                                        \end{array}
                                      \right)
\end{align*}
of $B$ satisfying $\bar\partial_B\sigma''_c=0$ such that
\begin{align*}
 \lim\limits_{c\rightarrow0}\sigma''_c\cdot(\cdot B, \mathcal{L}, \nabla)= \mathbb{E}^{(\alpha)}_{2}:=(B',\mathcal{L'''},\left(
                         \begin{array}{cc}
                              0 & 0 \\
                              \xi' & 0 \\
                          \end{array}
                    \right) ),
\end{align*}
where the parabolic structure $\mathcal{L'''}=\{L'''_i\}_{i=1,\cdots,5}$ is given as follows,  \begin{align*}
 L'''_i=\left\{
        \begin{array}{ll}
          \mathcal{O}_{\mathbb{P}^1}(-1)|_{z_i}, & \hbox{$[q_0+q_1z_i:r_0+r_1z_i+r_2z_i^2]=u_i$;} \\
          \mathcal{O}_{\mathbb{P}^1}(2)|_{z_i}, & \hbox{$[q_0+q_1z_i:r_0+r_1z_i+r_2z_i^2]\neq u_i$.}
        \end{array}
      \right.
\end{align*}Let $D^{(\alpha)}_2=\{z_i\in D:L'''_i=\mathcal{O}_{\mathbb{P}^1}(2)|_{z_i}\}$. Since $n_\infty\leq 1$,
one can always find $\alpha$  such that $D^{(\alpha)}_2=\{z_j\}$ for given point $z_j\in D$,  then $\mathbb{E}^{(\alpha)}_{2}$ is a $\overrightarrow{w}$-stable strongly parabolic logarithmic Higgs bundle.

v) For $(B',\mathcal{L})$, the arguments are similar.
\end{proof}

\section{Simpson's  Conjectures: Foliation and Stratification on Moduli Spaces}

The locus of $\mathbb{C}^\times$-fixed points  lying in the moduli space
${SH}(d,\{\overrightarrow{d^{(i)}}\}_{i=1,\cdots,n},\{\overrightarrow{w^{(i)}}\}_{i=1,\cdots,n})$  is denoted by $ \mathrm{Fix}(d,\{\overrightarrow{d^{(i)}}\}_{i=1,\cdots,n},\{\overrightarrow{w^{(i)}}\}_{i=1,\cdots,n})$, which is divided into the union of connected components as
\begin{align*}
 \mathrm{Fix}(d,\{\overrightarrow{d^{(i)}}\}_{i=1,\cdots,n},\{\overrightarrow{w^{(i)}}\}_{i=1,\cdots,n})=\coprod\limits_{\alpha}\mathrm{Fix}_\alpha(d,\{\overrightarrow{d^{(i)}}\}_{i=1,\cdots,n},\{\overrightarrow{w^{(i)}}\}_{i=1,\cdots,n}),
\end{align*}
 and one defines
\begin{align*}
  M_\alpha(d,\{\overrightarrow{d^{(i)}}\}_{i=1,\cdots,n},\{\overrightarrow{w^{(i)}}\}_{i=1,\cdots,n},\{\overrightarrow{\nu^{(i)}}\}_{i=1,\cdots,n})
  =&\ \{(E,\mathcal{L},\nabla)\in M(d,\{\overrightarrow{d^{(i)}}\}_{i=1,\cdots,n},\{\overrightarrow{w^{(i)}}\}_{i=1,\cdots,n},\{\overrightarrow{\nu^{(i)}}\}_{i=1,\cdots,n})\\
  &\ :\lim\limits_{c\rightarrow0}c\cdot (E,\mathcal{L},\nabla)\in \mathrm{Fix}_\alpha(d,\{\overrightarrow{d^{(i)}}\}_{i=1,\cdots,n},\{\overrightarrow{w^{(i)}}\}_{i=1,\cdots,n})\}.
\end{align*}
 Then we have the disjoint union
  \begin{align*}
  M(d,\{\overrightarrow{d^{(i)}}\}_{i=1,\cdots,n},\{\overrightarrow{w^{(i)}}\}_{i=1,\cdots,n},\{\overrightarrow{\nu^{(i)}}\}_{i=1,\cdots,n})=\coprod\limits_\alpha M_\alpha(d,\{\overrightarrow{d^{(i)}}\}_{i=1,\cdots,n},\{\overrightarrow{w^{(i)}}\}_{i=1,\cdots,n},\{\overrightarrow{\nu^{(i)}}\}_{i=1,\cdots,n}),
  \end{align*} and taking the zero-limit of the $\mathbb{C}^\times$-action provides the morphisms
\begin{align*}
 \Psi_\alpha:M_\alpha(d,\{\overrightarrow{d^{(i)}}\}_{i=1,\cdots,n},\{\overrightarrow{w^{(i)}}\}_{i=1,\cdots,n},\{\overrightarrow{\nu^{(i)}}\}_{i=1,\cdots,n})&\rightarrow \mathrm{Fix}_\alpha(d,\{\overrightarrow{d^{(i)}}\}_{i=1,\cdots,n},\{\overrightarrow{w^{(i)}}\}_{i=1,\cdots,n}),\\
\Psi=\coprod_\alpha \Psi_\alpha:M(d,\{\overrightarrow{d^{(i)}}\}_{i=1,\cdots,n},\{\overrightarrow{w^{(i)}}\}_{i=1,\cdots,n},\{\overrightarrow{\nu^{(i)}}\}_{i=1,\cdots,n})&\rightarrow \mathrm{Fix}(d,\{\overrightarrow{d^{(i)}}\}_{i=1,\cdots,n},\{\overrightarrow{w^{(i)}}\}_{i=1,\cdots,n})\\
 (E,\mathcal{L},\nabla)&\mapsto \lim\limits_{c\rightarrow0}c\cdot (E,\mathcal{L},\nabla).
\end{align*}

The parabolic version of Simpson's conjecture concerning the foliation and stratification on the de Rham moduli space \cite{si} is the following.

\begin{conjecture}\label{c}
There exists a weight system $\{\overrightarrow{w^{(i)}}\}_{i=1,\cdots,n}$ and a coarse granulation of the index set $A=\{\alpha\}$
as
\begin{align*}
  A=\coprod_{p\in I}A^p
\end{align*}
for pairwise-disjoint non-empty subsets $A^p$'s of $A$ such that the following properties hold.
\begin{enumerate}
\item (foliation property) The morphism  $\Psi$ is surjective, and moreover, all  the fibers  of $\Psi$  fit together into a
regular foliation with closed leaves on $M(d,\{\overrightarrow{d^{(i)}}\}_{i=1,\cdots,n},\{\overrightarrow{w^{(i)}}\}_{i=1,\cdots,n},\{\overrightarrow{\nu^{(i)}}\}_{i=1,\cdots,n})$.
  \item (stratification property)
 There is a partial order on the index set $I$ such that
\begin{align*}
  &\overline{ M^p(d,\{\overrightarrow{d^{(i)}}\}_{i=1,\cdots,n},\{\overrightarrow{w^{(i)}}\}_{i=1,\cdots,n},\{\overrightarrow{\nu^{(i)}}\}_{i=1,\cdots,n})}\backslash  M^p(d,\{\overrightarrow{d^{(i)}}\}_{i=1,\cdots,n},\{\overrightarrow{w^{(i)}}\}_{i=1,\cdots,n},\{\overrightarrow{\nu^{(i)}}\}_{i=1,\cdots,n})\\
= &\ \coprod_{q<p}  M^q(d,\{\overrightarrow{d^{(i)}}\}_{i=1,\cdots,n},\{\overrightarrow{w^{(i)}}\}_{i=1,\cdots,n},\{\overrightarrow{\nu^{(i)}}\}_{i=1,\cdots,n}),
\end{align*}
where
\begin{align*}
  M^p(d,\{\overrightarrow{d^{(i)}}\}_{i=1,\cdots,n},\{\overrightarrow{w^{(i)}}\}_{i=1,\cdots,n},\{\overrightarrow{\nu^{(i)}}\}_{i=1,\cdots,n})=\coprod_{\alpha\in A^p}M_\alpha(d,\{\overrightarrow{d^{(i)}}\}_{i=1,\cdots,n},\{\overrightarrow{w^{(i)}}\}_{i=1,\cdots,n},\{\overrightarrow{\nu^{(i)}}\}_{i=1,\cdots,n}),
\end{align*}
and $\overline{\bullet}$ stands for the closure of $\bullet$.
\end{enumerate}
\end{conjecture}

In this section, we focus on  Simpson's conjecture for the moduli space $M(1,\overrightarrow{d},\overrightarrow{\nu})$ with the fixed non-special spectrum $\overrightarrow{\nu}$ when $C\simeq\mathbb{P}^1$, $n=|D|=5$. Given different non-special weight systems $\overrightarrow{w}$, there are different morphisms
\begin{align*}
 \Psi_{\overrightarrow{w}}: M(1,\overrightarrow{d},\overrightarrow{\nu})\rightarrow \mathrm{Fix}(1,\overrightarrow{d},\overrightarrow{w}).
\end{align*}
by taking the zero-limit of the $\mathbb{C}^\times$-action. We will only study the weight system $\overrightarrow{w}$ with $\sum\limits_{i=1}^5w_1<1$.
For such choice, $\mathrm{Fix}(1,\overrightarrow{d},\overrightarrow{w})$ consists of two components.

\begin{proposition}\label{mn}
$\mathrm{Fix}(1,\overrightarrow{d},\overrightarrow{w})=\mathrm{Fix}^0\coprod \mathrm{Fix}^1$, where

\begin{itemize}
  \item $\mathrm{Fix}^0$ consists of a single point  that represents a strongly  parabolic logarithmic Higgs bundle
\begin{align*}
 \mathbb{E}'= (B',\mathcal{L},\theta=\left(
                                                          \begin{array}{cc}
                                                            0 & 1 \\
                                                            0 & 0 \\
                                                          \end{array}
                                                        \right)
),
\end{align*} with $L_i=\mathcal{O}(-1)|_{z_i}$,
  \item  $\mathrm{Fix}^1$ consists of points representing strongly parabolic logarithmic Higgs bundles
\begin{align*}
  (B,\mathcal{L},\theta=\left(
                                                          \begin{array}{cc}
                                                            0 & \Theta \\
                                                            0 & 0 \\
                                                          \end{array}
                                                        \right)
),
\end{align*}
with
 \begin{align*}
   L_i=\left\{
         \begin{array}{ll}
          \mathcal{O}_{\mathbb{P}^1}|_{z_i} , & \hbox{$\theta|_{z_i}\neq 0$;} \\
       \mathcal{O}_{\mathbb{P}^1}|_{z_i}\ \text{or}\ \mathcal{O}_{\mathbb{P}^1}(1)|_{z_i}   , & \hbox{$\theta|_{z_i}= 0$,}
         \end{array}
       \right.
 \end{align*}
and nonzero morphisms  $\Theta: \mathcal{O}_{\mathbb{P}^1}(1)\rightarrow \mathcal{O}_{\mathbb{P}^1}\otimes \Omega^1_C(\mathcal{D})\simeq \mathcal{O}_{\mathbb{P}^1}(3)$    parameterized by $(H^0(\mathbb{P}^1, \mathcal{O}_{\mathbb{P}^1}(2))-\{0\})/\mathbb{C}^\times\simeq \mathbb{P}^2$.\end{itemize}

\end{proposition}

\begin{proof}
This proposition follows from Corollary \ref{sss}, and the stability condition with respect to the weight system $\overrightarrow{w}$ satisfying $\sum\limits_{i=1}^5w_1<1$.
\end{proof}

We confirm Simpson's conjecture for our case in the following steps.

\vspace{5pt}

\textbf{\emph{Step 1: $\mathrm{Fix}^1$  is set-theoretically realized as a non-separated scheme.}}
\vspace{5pt}

It can be achieved  via a suitable gluing of two copies since the parabolic structure at the point lying in $D$ has two choices if it is the zero of Higgs field.
Let $\tau: \mathbb{P}^1\times \mathbb{P}^1\rightarrow \mathbb{P}^2$ be the natural two-fold ramified covering. Identifying $\mathbb{P}^1$ with $C$,  for a pair $(z, z')\in \mathbb{P}^1\times \mathbb{P}^1$, the image  $\tau(z,z') $ determines a Higgs field $\theta$ by treating $z, z'$ as   zero points of $\theta$.   Lying in $\mathbb{P}^2$, we have
\begin{itemize}
  \item 15 special points: $S_1=\{\tau_{ij}\}_{1\leq i<j\leq 5}$ and $S_2=\{\tau_{ii}\}_{i=1,\cdots,5}$, where $\tau_{ij}=\tau(z_i,z_j)$ for a pair $(z_i,z_j)\in D\times D$;
  \item five special lines:  $T=\{ \tau_{i}\}_{i=1,\cdots,5}$, where $\tau_{i}$ is defined by
  \begin{align*}
    \tau_{i}:&\ \mathbb{P}^1\hookrightarrow \mathbb{P}^1\times  \mathbb{P}^1\rightarrow  \mathbb{P}^2\\
    &z\mapsto(z_i,z)\mapsto \tau(z_i,z).
  \end{align*}
\end{itemize}
 Blowing up $\mathbb{P}^2$ at $S_2$ with five exceptional curves $Q=\{O_i\}_{i=1,\cdots,5}$, we get a rational  surface   denoted by $\mathcal{B}$.
 Let $\tilde \tau_i$ be the strict transformation of $\tau_i$, and denote by $I_i$ the intersection point of $\tilde \tau_i$ and $O_i$. The points $z_j,j\neq i\in\{1,\cdots,5\}$, lying in  $O_i$ are denoted by $O_i^j$.
 The points in $S_1$ are also viewed  as those in $\mathcal{B}$. Let $ \underline{\mathcal{B}}$ be a copy of $\mathcal{B}$, and $\underline{\bullet}$ be the corresponding one in $ \underline{\mathcal{B}}$ of the object $\bullet$ in  $\mathcal{B}$. Now we  glue $\mathcal{B}$ and $\underline{\mathcal{B}}$ by the following identifications
 \begin{itemize}
   \item $\mathcal{B}\backslash(T\bigcup Q)\simeq \underline{\mathcal{B}}\backslash(\underline{T}\bigcup \underline{Q})$;
   \item $\tilde \tau_i\backslash I_i\simeq \underline{{O}_i}\backslash \underline{I_i}, i=1,\cdots,5$;
   \item $O_i\backslash(\{I_i\}\bigcup\{O_i^j\}_{j\neq i})\simeq \underline{\tilde\tau_i}\backslash(\{\underline{I_i}\}\bigcup\{\underline{\tau_{ij}}\}_{j>i}\bigcup\{\underline{\tau_{ki}}\}_{k<i} ), i=1,\cdots,5$,
 \end{itemize}
 then the resulting non-separated scheme is denoted by $\mathfrak{F}$.  The above gluing process is illuminated as follows (we exhibit only two marked points).
 \begin{figure}[H] 
\centering 
\includegraphics[width=0.5\textwidth]{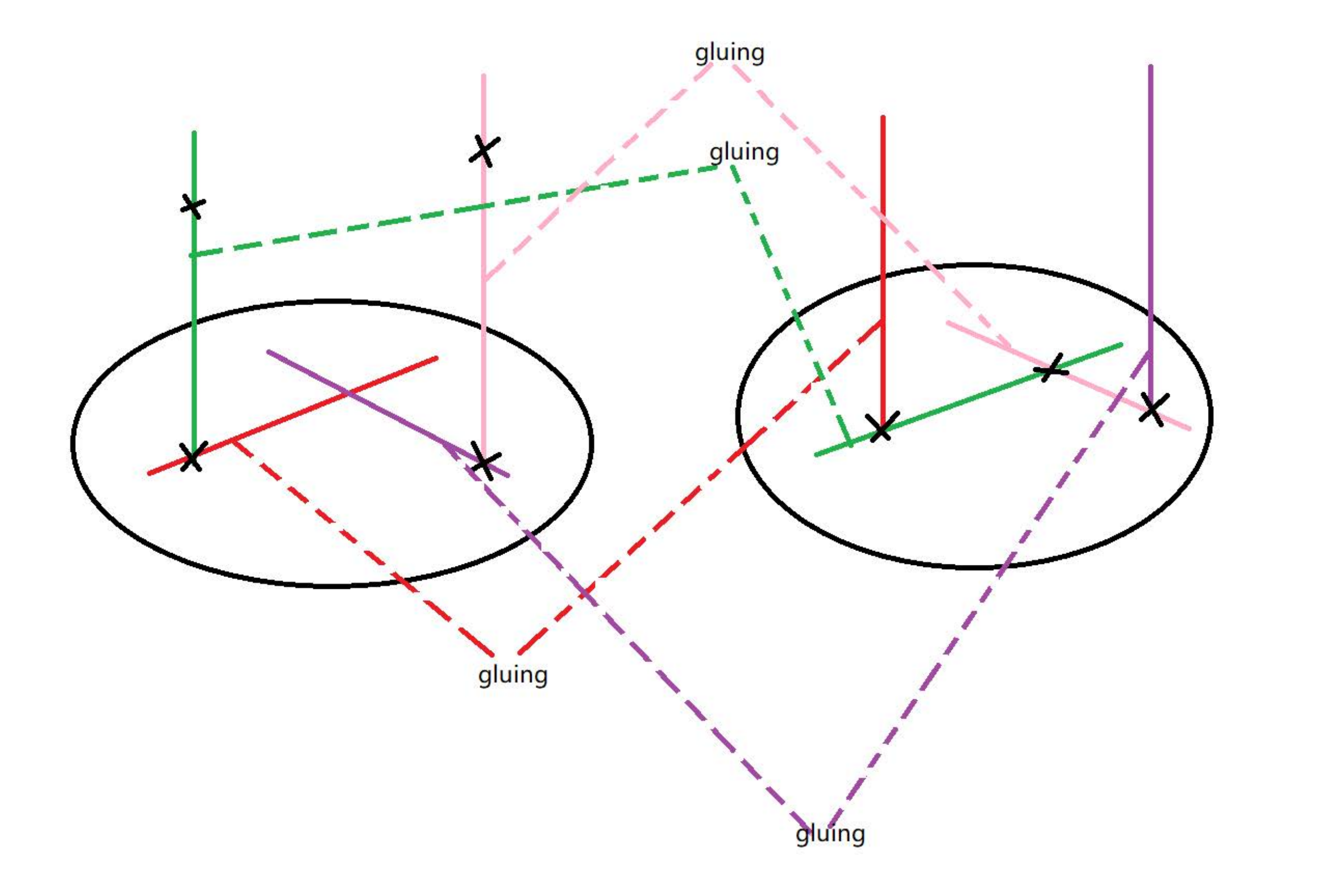} 
\end{figure}

By the above discussions and Theorem \ref{q}, we immediately obtain the following theorem, which is an analog of \cite[Lemma 6.1]{lm} and \cite[Theorem 5.6]{lm}.

\begin{theorem}\label{bv}\
\begin{enumerate}
\item Let $N^{\mathrm{ind}}_B(\overrightarrow{d})$ be the moduli stack of indecomposable parabolic structures with dimension system $\overrightarrow{d}$ on $B$, then $N^{\mathrm{ind}}_B(\overrightarrow{d})$ is a $\mathbb{C}^\times$-gerbe over $\mathfrak{F}$.
  \item Denote by $\Gamma_i$ the image of ${\tilde\tau_i}\backslash(\{{I_i}\}\bigcup\{{\tau_{ij}}\}_{j>i}\bigcup\{{\tau_{ki}}\}_{k<i} )$ (or $\underline{O_i}\backslash(\{\underline{I_i}\}\bigcup \{\underline{O_i^j}\}_{j\neq i})$) in $\mathfrak{F}$, and define $\mathfrak{F}'=\mathfrak{F}\backslash(\{\Gamma_i\}_{i=1,\cdots,5})$, then there is a set-theoretical isomorphism between $\mathrm{Fix}^1$ and  $\mathfrak{F}'$, which is constructibly algebraic.
  \end{enumerate}

\end{theorem}

\begin{remark}
 Inspired by Theorem \ref{bv}, we propose the following conjecture for $C$ being  a general compact Riemann surface.

\begin{conjecture} 
If $\mathrm{Fix}_\alpha(d,\{\overrightarrow{d^{(i)}}\}_{i=1,\cdots,n},\{\overrightarrow{w^{(i)}}\}_{i=1,\cdots,n})$ is of maximal dimension among the connected components, then it is an open subscheme of $M^{\textrm{sim}}(d,\{\overrightarrow{d^{(i)}}\}_{i=1,\cdots,n})$, where $M^{\textrm{sim}}(d,\{\overrightarrow{d^{(i)}}\}_{i=1,\cdots,n})$ is the moduli space of simple parabolic bundles of degree $d$ and dimension system $\{\overrightarrow{d^{(i)}}\}_{i=1,\cdots,n}$.
\end{conjecture}

 When $C$ is a compact Riemann surface of genus $g\geq 2$, $D=\emptyset$, this conjecture is implied by \cite[Corollary 4.6]{HH22a} and  \cite[Theorem 3.2]{lo}, and when $C\simeq \mathbb{P}^1, n=|D|=4,5$, this conjecture has been   checked for some cases. A similar result that relates stable Higgs bundles and indecomposable bundles in positive characteristic is as follows. The number of geometrically indecomposable vector bundles of rank $r$ and degree $d$
 over  $C$ defined over a finite field can be computed by the number of points of the moduli space of stable Higgs bundles of rank $r$ and degree $d$ over $C$ \cite{sh}.
\end{remark}

\textbf{\emph{Step 2:  $\Psi^{-1}_{\overrightarrow{w}}(\mathrm{Fix}^1)$  is made into a  regular foliation by the fibration over $\mathrm{Fix}^1$.}}

\begin{theorem}\label{bvv}
$\Psi^{-1}_{\overrightarrow{w}}(\mathrm{Fix}^1)$ is a fibration over $\mathrm{Fix}^1$ with fibers of dimension 2 and has a complement of  codimension 2 in $M(1,\overrightarrow{d},\overrightarrow{\nu})$.
\end{theorem}
\begin{proof}We  need to show $\Psi_{\overrightarrow{w}}$ is surjective and $\dim_\mathbb{C}(\Psi^{-1}_{\overrightarrow{w}}(\mathbb{E}))=2$ for a given $\mathbb{E}=(E,\mathcal{L},\theta
)\in \mathrm{Fix}(1,\overrightarrow{d},\overrightarrow{w})$. By proposition \ref{mn}, there are two cases.
\begin{description}
  \item[Case I $\mathbb{E}\in \mathrm{Fix}^1$] It follows  from the proof of Theorem \ref{ss} that  any  $(E',\mathcal{L}',\nabla)\in\Psi^{-1}_{\overrightarrow{w}}(\mathbb{E})$ satisfies
\begin{itemize}
  \item $E'\simeq B$;
  \item $L_i'=\left\{
                \begin{array}{ll}
                  u_i=[\kappa_i:\lambda_i]\neq \infty, & \hbox{$L_i=\mathcal{O}_{\mathbb{P}^1}|_{z_i}$,} \\
                  \mathcal{O}_{\mathbb{P}^1}(1)|_{z_i}, & \hbox{$L_i=\mathcal{O}_{\mathbb{P}^1}(1)|_{z_i}$ (\textrm{hence} $\Theta|_{z_i}=0$);}
                \end{array}
              \right.
$
  \item $ \nabla=
   d+\sum\limits_{i=1}^5\frac{A_i}{z-z_i}dz$,
           where \begin{align*}
 A_i=\left\{
       \begin{array}{ll}
          \left(
             \begin{array}{cc}
               \nu_i^+- A_i^{(12)}u_i &  A_i^{(12)} \\
               (\nu_i^+-\nu_i^-)u_i-A_i^{(12)}u_i^2  & \nu_i^-+ A_i^{(12)}u_i \\
             \end{array}
           \right), & \hbox{$L_i=\mathcal{O}_{\mathbb{P}^1}|_{z_i}$,} \\
        \left(
             \begin{array}{cc}
               \nu_i^-&  0 \\
               A_i^{(21)}  & \nu_i^+  \\
             \end{array}
           \right), & \hbox{$L_i=\mathcal{O}_{\mathbb{P}^1}(1)|_{z_i}$.}
       \end{array}
     \right.
\end{align*}
\end{itemize}
Note that $A^{(12)}_i,i=1,\cdots,5$, are totally  determined by $\theta$, and there is constraint on $\{u_i\}_{z_i\in D'}$ given by
\begin{align*}
 \sum_{z_i\in D'} (\nu_i^+- A_i^{(12)}u_i)+ \sum_{z_i\in D\backslash D'}\nu_j^-=0,
\end{align*}
where $D'=\{z_i\in D: L_i=\mathcal{O}_{\mathbb{P}^1}|_{z_i}\}$ with $|D'|\geq 3$.
 Therefore, the dimension of $\Psi^{-1}_{\overrightarrow{w}}(\mathbb{E})$ for $\mathbb{E}\in \mathbf{F}_1(1, \overrightarrow{w})$
is calculated as
\begin{align*}
  \dim_\mathbb{C}(\Psi^{-1}_{\overrightarrow{w}}(\mathbb{E}))=5-1-[\dim_\mathbb{C}(\Aut(B))-1]=2.
\end{align*}
  \item[ Case II $\mathbb{E}\in \mathrm{Fix}^0$, i.e. $\mathbb{E}=\mathbb{E}'$] In this case, any  $(E',\mathcal{L}',\nabla)\in\Psi^{-1}_{\overrightarrow{w}}(\mathbb{E})$ satisfies
\begin{itemize}
  \item $E'\simeq B'$;
  \item $L_i'=u_i=[\kappa_i:\lambda_i]$ with $u_i\neq \infty$;
  \item $ \nabla=
   d+\sum\limits_{i=1}^5\frac{1}{z-z_i}\left(
             \begin{array}{cc}
               A_i^{(11)} &  A_i^{(12)} \\
                A_i^{(21)} &  A_i^{(22)} \\
             \end{array}
           \right)dz+\left(
             \begin{array}{cc}
              0 &  0 \\
                G^{(21)}(z) &  0 \\
             \end{array}
           \right)dz$ with  a polynomial $G^{(21)}(z)$ of degree of not great than 1.
           \end{itemize}
           We have seen that $A_i$'s are totally determined by the parabolic structure $\mathcal{L}'$ on $B'$. Since the moduli space of indecomposable parabolic structures on  $B'$ is a single point,  $\Psi^{-1}_{\overrightarrow{w}}(\mathbb{E})$ is also of two dimensions by counting the degrees of freedom of $G^{(21)}(z)$.
\end{description}
We complete the proof.
\end{proof}

\textbf{\emph{Step 3: $M(1,\overrightarrow{d},\overrightarrow{\nu})$ is the closure of $\Psi^{-1}_{\overrightarrow{w}}(\mathrm{Fix}^1)$.}}

\begin{theorem}\label{bvvv}$\overline{ \Psi^{-1}_{\overrightarrow{w}}(\mathrm{Fix}^1)}= M(1,\overrightarrow{d},\overrightarrow{\nu})$.
\end{theorem}
\begin{proof}The  proof of this theorem follows the approach due to Simpson \cite{s1}, and the new ingredient is replacing the usual  logarithmic  de Rham complex by middle  logarithmic  de Rham complex.

Associated to a given parabolic logarithmic flat bundle  $(E\simeq B',\mathcal{L}, \nabla)$ with the  non-special spectrum $\overrightarrow{\nu}$, there is a middle logarithmic  de Rham complex $\mathbf{MDR}_{(E,\nabla)}$ defined as follows
\begin{align*}
\begin{array}{ccccccccc}
   &  &0  \\
   &  &\downarrow \\
\mathbf{MDR}_{(E,\mathcal{L},\nabla)}:\mathrm{End}(E) &\xrightarrow{\nabla}&\mathrm{MDR}^1_{(E,\mathcal{L},\nabla)}  \\
&  &\downarrow \\
\ \ \ \ \mathbf{DR}_{(E,\nabla)}: \mathrm{End}(E) &\xrightarrow{\nabla}&\mathrm{End}(E)\otimes \Omega^1_C(\mathcal{D})  \\
&  &\downarrow \\
&  &\mathrm{End}^0(E)|_D\\
&  &\downarrow \\
 &  &0
  \end{array},
 \end{align*}
 where $\mathrm{End}^0(E)|_D=\bigcup\limits_{z_i\in D}\mathrm{End}^0(E)|_{z_i}$ for the zero eigen-subspace $\mathrm{End}^0(E)|_{z_i}$ of $\mathrm{End}(E)|_{z_i}$ with respect to $\mathrm{Res}_{z_i}(\nabla)$ (cf. \cite{si}, also Appendix \ref{C}). The deformation theory for $M(1,\overrightarrow{d},\overrightarrow{\nu})$ at the point  $(E,\mathcal{L},\nabla)$ is controlled by the hypercohomology $\mathbb{H}^\bullet(\mathbf{MDR}_{(E,\mathcal{L},\nabla)})$ of middle logarithmic de Rham complex. More precisely, the 0th-, 1st- and 2nd-hypercohomologies describe the automorphism, deformation and obstruction of $(E,\mathcal{L},\nabla)$, respectively (cf. \cite[Theorem 2.9]{si}).

 We choose a subsheaf $\varphi:F\simeq \mathcal{O}_{\mathbb{P}^1}(1)\hookrightarrow E$ and consider the deformation of the quadruple $(E,\mathcal{L},\nabla,\varphi)$, which is controlled by the hypercohomology of the complex
\begin{align*}
  \mathbf{C}: \End(E)\oplus \Hom(F,F)\xrightarrow{\Upsilon}\mathrm{MDR}^1_{(E,\mathcal{L},\nabla)}\oplus \Hom(F,E)
\end{align*}
with the differential\begin{align*}
                       \Upsilon=\left(
                                  \begin{array}{cc}
                                    \nabla & 0\\
                                  \bullet\circ\varphi  & \varphi\circ \bullet\\
                                  \end{array}
                                \right).
                     \end{align*}
                     Hence, there is a long exact sequence
                     \begin{align*}
                      \cdots\rightarrow \mathbb{H}^i(\Hom(F,F)\xrightarrow{\varphi\circ\bullet}\Hom(F,E))\rightarrow\mathbb{H}^i(\mathbf{C})\rightarrow\mathbb{H}^i(\mathbf{MDR}_{(E,\mathcal{L},\nabla)})\rightarrow\cdots.
                     \end{align*}
\begin{lemma}$\mathbb{H}^2(\Hom(F,F)\xrightarrow{\varphi\circ\bullet}\Hom(F,E))\simeq \mathbb{C}$.
\end{lemma}
    \begin{proof}   By the exact sequence
                     \begin{align*}
                       \cdots\rightarrow H^1(C,\Hom(F,F))\xrightarrow{\varphi\circ\bullet}H^1(C,\Hom(F,E))\rightarrow\mathbb{H}^2(\Hom(F,F)\xrightarrow{\varphi\circ\bullet}\Hom(F,E))\rightarrow0,
                     \end{align*}
                  we have  \begin{align*}
                          \mathbb{H}^2(\Hom(F,F)\xrightarrow{\varphi\circ\bullet}\Hom(F,E))\simeq
                   H^1(C,\Hom(F,E))\simeq H^1(\mathbb{P}^1,\mathcal{O}_{\mathbb{P}^1}(-2)).
                        \end{align*}The lemma follows.
                        \end{proof}
                     \begin{lemma} The hypercohomology $\mathbb{H}^1(\mathbf{MDR}_{(E,\mathcal{L},\nabla)})$ has a direct summand isomorphic to
$H^1(\mathbb{P}^1,\mathcal{O}_{\mathbb{P}^1}(-3))$. \end{lemma}

\begin{proof}Consider the   subbundle $(E_1,\mathcal{L}^1)$ of $(E,\mathcal{L})$ with $E_1\simeq \mathcal{O}_{\mathbb{P}^1}(2)$ and $L_i^1=L_i\bigcap F$, and the quotient bundle $(E_0,\mathcal{L}^0)$ with $G=E/F$ and $L_i^0=L_i/L_i^1$. Indeed, by  the automorphism of $E$, one can fix $\mathcal{L}=(0,0,0,0,0,1)$, hence
$L'_i=\{0\}
$ and $(E_0,\mathcal{L}'')\simeq (\mathcal{O}_{\mathbb{P}^1}(-1),\{\mathcal{O}_{\mathbb{P}^1}(-1)|_{z_i}\}_{i=1,\cdots,5})$. Writing $\nabla=\left(
                                                                                            \begin{array}{cc}
                                                                                              \nabla_0 & \theta \\
                                                                                              \xi & \nabla_1 \\
                                                                                            \end{array}
                                                                                          \right)$, the $\mathbb{C}^\times$-limits of $(E,\mathcal{L},\nabla)$ is exactly $(E_0\oplus E_1,\mathcal{L}^0\oplus \mathcal{L}^1, \left(
                                                                                            \begin{array}{cc}
                                                                                              0 & \theta \\
                                                                                              0 & 0 \\
                                                                                            \end{array}
                                                                                          \right) )\simeq \mathbb{E}'$. This subbundle induces a filtration
\begin{align*}
  \mathcal{F}: F^{-1}(\mathrm{End}(E))\supset F^{0}(\mathrm{End}(E))\supset F^{1}(\mathrm{End}(E))
\end{align*}
 on $\mathrm{End}(E)$
such that the corresponding gradings are given by
\begin{align*}
\mathrm{Gr}^\mathcal{F}_{-1}(\mathrm{End}(E))&=\Hom(E_1,E_0),\\
  \mathrm{Gr}^\mathcal{F}_0(\mathrm{End}(E))&=\Hom(E_0,E_0)\oplus\Hom(E_1,E_1),\\
   \mathrm{Gr}^\mathcal{F}_1(\mathrm{End}(E))&=\Hom(E_0,E_1),
\end{align*}
hence we have  a filtration $\mathbf{F}$ on the complex $\mathbf{MDR}_{(E,\mathcal{L},\nabla)}$ as
\begin{align*}
\mathbf{F}^p(\mathbf{MDR}_{(E,\mathcal{L},\nabla)}):F^p(\mathrm{End}(E)) &\xrightarrow{\nabla} F^p(\mathrm{MDR}^1_{(E,\mathcal{L},\nabla)}  ), \ p=-1,0,1,
\end{align*}where
\begin{align*}
F^p(\mathrm{MDR}^1_{(E,\mathcal{L},\nabla)}  )=\mathrm{MDR}^1_{(E,\nabla)}  )\bigcap F^{p-1}(\mathrm{End}(E))\otimes \Omega^1_C(\mathcal{D}),
\end{align*}
and the corresponding gradings
\begin{align*}
 \mathbf{Gr}^{\mathbf{F}}_p(\mathbf{MDR}_{(E,\mathcal{L},\nabla)}): \mathrm{{Gr}}^{\mathcal{F}}_p(\mathrm{End}(E)) &\xrightarrow{\theta_p}  \mathrm{{Gr}}^{\mathcal{F}}_p(\mathrm{MDR}^1_{(E,\mathcal{L},\nabla)}  ), \ p=-1,0,1.
\end{align*}
Therefore, we get the spectral sequence
\begin{align*}
  E_1^{p,i-p}=: \mathbb{H}^i(\mathbf{Gr}^{\mathbf{F}}_p(\mathbf{MDR}_{(E,\mathcal{L},\nabla)}))\Rightarrow\mathbb{H}^i(\mathbf{MDR}_{(E,\mathcal{L},\nabla)}).
\end{align*}
Since $\mathbb{E}'$ is $\overrightarrow{w}$-stable, $\mathbb{H}^0( \bigoplus\limits_{p=-1,0,1}\mathbf{Gr}^{\mathbf{F}}_p(\mathbf{MDR}_{(E,\mathcal{L},\nabla)}))\simeq \mathbb{H}^2(\bigoplus\limits_{p=-1,0,1} \mathbf{Gr}^{\mathbf{F}}_p(\mathbf{MDR}_{(E,\mathcal{L},\nabla)}))\simeq\mathbb{C}$, which implies that the above spectral sequence degenerates at $E_1$ (cf.   \cite[Lemma 7.1 ]{s1}). Hence, there is a decomposition
\begin{align*}
  \mathbb{H}^i(\mathbf{MDR}_{(E,\mathcal{L},\nabla)})\simeq\bigoplus_{p=-1,0,1}\mathbb{H}^i(\mathbf{Gr}^{\mathbf{F}}_p(\mathbf{MDR}_{(E,\mathcal{L},\nabla)})).
\end{align*}
In particular, it follows that $\mathbb{H}^1(\mathbf{MDR}_{(E,\mathcal{L},\nabla)})$ has a direct summand isomorphic to $\mathbb{H}^1(\mathbf{Gr}^{\mathbf{F}}_{-1}(\mathbf{MDR}_{(E,\mathcal{L},\nabla)}))\simeq {H}^1(C,\Hom(E_1,E_0))\simeq H^1(\mathbb{P}^1,\mathcal{O}_{\mathbb{P}^1}(-3))$.
\end{proof}
 As a result, we have surjective morphisms
\begin{align*}
  \mathbb{H}^1(\mathbf{MDR}_{(E,\mathcal{L},\nabla)})\twoheadrightarrow H^1(\mathbb{P}^1,\mathcal{O}_{\mathbb{P}^1}(-3))
  \twoheadrightarrow H^1(\mathbb{P}^1,\mathcal{O}_{\mathbb{P}^1}(-2))\simeq \mathbb{H}^2(\Hom(F,F)\xrightarrow{\varphi\circ\bullet}\Hom(F,E)),
\end{align*}
where the second surjective morphism is due to the short exact sequence
\begin{align*}
 0\rightarrow\mathcal{O}_{\mathbb{P}^1}(-3)\rightarrow\mathcal{O}_{\mathbb{P}^1}(-2)\rightarrow S_z\rightarrow 0
\end{align*}
for a skyscraper sheaf $S_z$ supported at a  single point $z\in \mathbb{P}^1$.
The irreducibility of $(E,\nabla)$ leads to the vanishing of  $\mathbb{H}^2(\mathbf{MDR}_{(E,\mathcal{L},\nabla)})$ (cf. \cite[Corollary 2.10]{s1}), then together with the exact sequence
\begin{align*}
                      \cdots\rightarrow\mathbb{H}^1(\mathbf{MDR}_{(E,\mathcal{L},\nabla)})\rightarrow \mathbb{H}^2(\Hom(F,F)\xrightarrow{\varphi\circ\bullet}\Hom(F,E))\rightarrow\mathbb{H}^2(\mathbf{C})\rightarrow0.
                     \end{align*}
we conclude that $\mathbb{H}^2(\mathbf{C})$ vanishes, namely  the infinitesimal deformation of $(E,\mathcal{L},\nabla,\varphi)$ is unobstructed.

 Note that the nonzero element in $H^1(\mathbb{P}^1,\mathcal{O}_{\mathbb{P}^1}(-3))$ represents a nontrivial extension $0\rightarrow\mathcal{O}_{\mathbb{P}^1}(2)\rightarrow \mathcal{E}\rightarrow \mathcal{O}_{\mathbb{P}^1}(-1)\rightarrow0$ for some coherent sheaf $\mathcal{E}$ on $\mathbb{P}^1$ and the local freeness is an open condition. We can find a family of triples $(E_t,\nabla_t,\varphi_t)$ parameterized by $t\in(0,\varepsilon]$ for a small positive real number $\varepsilon$
such that
\begin{itemize}
  \item $E_t\simeq B$;
  \item $(E_t,\nabla_t)$ has a spectrum $\overrightarrow{\nu}$;
\item $\varphi_t$ is an inclusion  of subbundle;
  \item $\lim\limits_{t\rightarrow0}(E_t,\nabla_t,\varphi_t)=(E,\nabla,\varphi)$,
\end{itemize}
 i.e. we have $(E_t,\nabla_t,\varphi_t)\in \Psi^{-1}_{\overrightarrow{w}}(\mathrm{Fix}^1)$ for all $t\in(0, \varepsilon]$. The theorem follows.
\end{proof}

So far, combining  the above steps,  the parabolic version of Simpson's conjecture is confirmed for our case. In summary, we have the following theorem.

\begin{theorem}Fix a non-special weight system $\overrightarrow{w}$ with $\sum\limits_{i=1}^5w_i<1$, and write $\mathrm{Fix}(1,\overrightarrow{d},\overrightarrow{w})=\coprod\limits_{\alpha}\mathrm{Fix}_\alpha(1,\overrightarrow{d},\overrightarrow{w})$ as the union of the connected components.
\begin{enumerate}
  \item $\Psi_{\overrightarrow{w}}: M(1,\overrightarrow{d},\overrightarrow{\nu})\rightarrow \mathrm{Fix}(1,\overrightarrow{d},\overrightarrow{w})$ is a surjective morphism with 2-dimensional fibers, which fit together into a regular foliation
on $ M(1,\overrightarrow{d},\overrightarrow{\nu})$.
  \item There is a coarse granulation of the index set $A=\{\alpha\}$
as
\begin{align*}
  A=A^0\coprod A^1,
\end{align*}
where
\begin{align*}
  A^0&=\{\alpha\in A: \dim_\mathbb{C}\mathrm{Fix}_\alpha(1,\overrightarrow{d},\overrightarrow{w})=0\}\neq \emptyset,\\
A^1&=\{\alpha\in A: \dim_\mathbb{C}\mathrm{Fix}_\alpha(1,\overrightarrow{d},\overrightarrow{w})=2\}\neq \emptyset,
\end{align*}
such that
\begin{align*}
 \overline{M^1(1,\overrightarrow{d},\overrightarrow{\nu})}\backslash M^1(1,\overrightarrow{d},\overrightarrow{\nu})=M^0(1,\overrightarrow{d},\overrightarrow{\nu}).
\end{align*}
\end{enumerate}

\end{theorem}

\
\

\appendix

\section{Elementary Transformations}\label{aa}

For a parabolic  bundle $(E, \mathcal{L})$ of type $(d,\overrightarrow{d})$ over $(C,D)$, fixing a point  $z_j\in D$, we consider a subbundle  $ E^\natural_j\subset E$
generated by those sections directed by $L_j$, thus $E^\natural_j$ satisfies the following commutative diagram
\begin{align*}
\begin{array}{ccccccccc}
   &  &0 &&0&& & & \\
   &  &\uparrow &&\uparrow&& & &\\
0& \rightarrow &L_j &\rightarrow&E|_{z_j}&\rightarrow&E|_{z_j}/L_j &\rightarrow &0 \\
&  &\uparrow &&\uparrow&&|| & & \\
0& \rightarrow &E^\natural_j &\rightarrow&E&\rightarrow&E|_{z_j}/L_j &\rightarrow &0 \\
&  &\uparrow &&\uparrow&& & &\\
&  &E(-z_j) &=&E(-z_j)&& & &\\
&  &\uparrow &&\uparrow&& & &\\
 &  &0 &&0&& & &
  \end{array}.
\end{align*}
Note that  $E^\natural_j$ coincides with $E$ over $C\backslash\{z_j\}$, and  $\deg(E^\natural_j)=d-1 $.
 The parabolic structure $\mathcal{L}_j^\natural=\{ L^\natural_i\}_{i=1,\cdots,n}$ on $E^\natural_j$ is given by
\begin{align*}
  L^\natural_i=\left\{
              \begin{array}{ll}
                L_i, & \hbox{$i\neq j$;} \\
               (E|_{z_i}/L_i)\otimes \mathcal{O}_{C}(-z_i)|_{z_i}, & \hbox{$i=j$,}
              \end{array}
            \right.
\end{align*}
thus $ L^\natural_j$ satisfies the short exact sequence
\begin{align*}
 0\rightarrow L^\natural_j\rightarrow E^\natural_j|_{z_j}\rightarrow L_j\rightarrow0.
\end{align*}
 The \emph{elementary transformation}\footnote{ In some literatures, it is also called Hecke transformation (e.g. \cite{ag,aaa}), or Hecke modification (e.g. \cite{no,bbb}).} of $(E, \mathcal{L})$ at the point $z_j$ is defined as the parabolic bundle $ \mathrm{Elm}_j (E,\mathcal{L}):=(E^\natural_j, \mathcal{L}_j^\natural)$ of type $(d-1,\overrightarrow{d})$.
 The \emph{twisted  elementary transformation} of $(E, \mathcal{L})$ at the point $z_j$, which is denoted as $\check{ \mathrm{Elm}}_j(E,\mathcal{L})$, is defined as the elementary transformation of $(\check{E}_j, \check{\mathcal{L}}_j)$ at  $z_j$, where $\check{E}_j=E(z_j)$ and the parabolic structure $\check{\mathcal{L}}_j=\{ \check{L}_i\}_{i=1,\cdots,n}$ on $\check{E}_j$ is given by
\begin{align*}
  \check{L}_i=\left\{
              \begin{array}{ll}
                L_i, & \hbox{$i\neq j$;} \\
              L_i\otimes \mathcal{O}_C(z_i)|_{z_i}, & \hbox{$i=j$,}
              \end{array}
            \right.
\end{align*} thus $\check{ \mathrm{Elm}}_j(E,\mathcal{L})={ \mathrm{Elm}}_j(\check{E}_j, \check{\mathcal{L}}_j)$.
Let $(e,f )$ be a basis of $E$ in terms of a local trivialization around $z_j$ such that $e$ generates $L_j$ at $z_j$, then $E^\natural_j$ and $(\check{E}_j)^\natural_j$ are generated by $(e, (z-z_j)f)$ and $(\frac{e}{z-z_j}, f)$ near $z_j$, respectively. One easily checks that
\begin{align*}
  \mathrm{Elm}_j\circ \check{ \mathrm{Elm}}_j (E,\mathcal{L})=\check{ \mathrm{Elm}}_j \circ \mathrm{Elm}_j(E,\mathcal{L})=(E,\mathcal{L}).
\end{align*}

\begin{proposition}
If a parabolic bundle $(E, \mathcal{L})$ of  type $(d,\overrightarrow{d})$ over $(C,D)$ is $\overrightarrow{w}$-stable, then its elementary transformation $(E^\natural_j, \mathcal{L}_j^\natural)$ at the point $z_j$ is $\overrightarrow{ w^\natural}$-stable, where the weight system $\overrightarrow{ w^\natural}$ is given by
\begin{align*}
   w^\natural_i=\left\{
               \begin{array}{ll}
                 w_i, & \hbox{$i\neq j$;} \\
                 1-w_i, & \hbox{$i=j$.}
               \end{array}
             \right.
\end{align*}
\end{proposition}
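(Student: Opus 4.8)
The plan is to reduce the statement to a single bijection between line subbundles that preserves the value of the stability function, so that $\overrightarrow{w}$-stability of $E$ transfers verbatim to $\overrightarrow{w^\natural}$-stability of $E^\natural_j$. Since $E^\natural_j$ agrees with $E$ over $C\setminus\{z_j\}$, every saturated line subbundle $F\subset E$ restricts to a line subbundle of $E^\natural_j$ away from $z_j$, whose saturation inside $E^\natural_j$ is a line subbundle $F^\natural$; conversely every line subbundle of $E^\natural_j$ arises this way, so $F\mapsto F^\natural$ is a bijection. It then suffices to show $s_{\overrightarrow{w^\natural}}(F^\natural)=s_{\overrightarrow{w}}(F)$ for every $F$: since $\overrightarrow{w}$-stability of $E$ means $s_{\overrightarrow{w}}(F)>0$ for all line subbundles $F$, this equality would immediately give $s_{\overrightarrow{w^\natural}}(F^\natural)>0$ for all $F^\natural$, which is exactly $\overrightarrow{w^\natural}$-stability of $E^\natural_j$.

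The heart of the argument is the local analysis at $z_j$. Choosing a local coordinate $t$ vanishing at $z_j$ and a local frame $e_1,e_2$ of $E$ with $L_j=\langle e_1(z_j)\rangle$, the modified bundle is $E^\natural_j=\langle e_1, te_2\rangle$, and from the defining sequence $0\to L^\natural_j\to E^\natural_j|_{z_j}\to L_j\to 0$ one reads off $L^\natural_j=\langle\overline{te_2}\rangle$. I would then split into two cases according to the relative position of $F|_{z_j}$ and $L_j$. If $F|_{z_j}=L_j$, a local generator of $F$ has the form $\alpha e_1+\gamma(te_2)$ with $\alpha(z_j)\neq 0$; this already lies saturated in $E^\natural_j$, so $F^\natural=F$, hence $\deg F^\natural=\deg F$ and $F^\natural|_{z_j}\neq L^\natural_j$. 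If instead $F|_{z_j}\neq L_j$, a local generator $v=\alpha e_1+\beta e_2$ has $\beta(z_j)\neq 0$, and the saturation of $F$ inside $E^\natural_j$ is generated by $tv=t\alpha e_1+\beta(te_2)$; thus $\deg F^\natural=\deg F-1$ and $F^\natural|_{z_j}=L^\natural_j$. For every $i\neq j$ the two bundles and the two subbundles agree near $z_i$ and $L^\natural_i=L_i$, so the incidence of $F^\natural|_{z_i}$ with $L^\natural_i$ matches that of $F|_{z_i}$ with $L_i$, and $w^\natural_i=w_i$.

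Finally I would substitute into the definition of $s$. Writing $S=\sum_{i\neq j,\,L_i\neq F|_{z_i}}w_i-\sum_{i\neq j,\,L_i=F|_{z_i}}w_i$ for the common contribution of the points $i\neq j$, and using $\deg E^\natural_j=d-1$ together with $w^\natural_j=1-w_j$: in the first case $s_{\overrightarrow{w}}(F)=d-2\deg F+S-w_j$ while $s_{\overrightarrow{w^\natural}}(F^\natural)=(d-1)-2\deg F+S+(1-w_j)$, and these agree; in the second case $s_{\overrightarrow{w}}(F)=d-2\deg F+S+w_j$ while $s_{\overrightarrow{w^\natural}}(F^\natural)=(d-1)-2(\deg F-1)+S-(1-w_j)$, and again they agree. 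Hence $s_{\overrightarrow{w^\natural}}(F^\natural)=s_{\overrightarrow{w}}(F)$ in both cases, which finishes the proof.

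The step I expect to be the genuine obstacle is the local bookkeeping at $z_j$: correctly identifying $L^\natural_j$ inside $E^\natural_j|_{z_j}$, and tracking both the degree jump and the new incidence of the flag in each of the two cases. Once these are pinned down, no inequality estimates are needed at all — the two shifts $d\mapsto d-1$ and $w_j\mapsto 1-w_j$ are tuned precisely so that the stability functions coincide exactly, and the preservation of stability is then automatic.
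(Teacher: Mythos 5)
Your proof is correct and takes essentially the same route as the paper: both set up the correspondence $F\leftrightarrow F^\natural_j$ between line subbundles and verify, via the same two-case analysis at $z_j$ (according to whether $F|_{z_j}=L_j$), that $s_{\overrightarrow{w}}(F)=s_{\overrightarrow{w^\natural}}(F^\natural_j)$ exactly, so stability transfers verbatim. The only difference is that you spell out the local identification of $L^\natural_j$ and the degree shift $\deg F^\natural=\deg F-1$ in the second case, which the paper's terser proof leaves implicit.
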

\begin{proof} Under the  elementary transformation, a subbundle $F$ of $E$ becomes a   subbundle $F^\natural_j$ of $E^\natural_j$, then we have
\begin{align*}
 F^\natural_j|_{z_i}\left\{
                      \begin{array}{ll}
                      = F|_{z_i}, & \hbox{$i\neq j$;} \\
                        =L_i^\natural, & \hbox{$i=j$ and $F|_{z_j}\neq L_j$;} \\
                        \neq L_i^\natural, & \hbox{$i=j$ and $F|_{z_j}=L_j$,}
                      \end{array}
                    \right.
\end{align*}
and
\begin{align*}
  \deg (F^\natural_j)=\left\{
                      \begin{array}{ll}
                        \deg (F)-1, & \hbox{$F|_{z_j}\neq L_j$;} \\
                        \deg (F), & \hbox{$F|_{z_j}=L_j$.}
                      \end{array}
                    \right.
\end{align*}
 It follows from the definition that
\begin{align*} s_{\overrightarrow{w}}(F)&=\left\{
        \begin{array}{ll}
          d-2\deg( F^\natural_j)+\sum\limits_{i\neq j,L_i\neq  F^\natural_j|_{z_i}}w_i-\sum\limits_{i\neq j,L_i= F^\natural_j|_{z_i}}w_i-w_j, & \hbox{$  F|_{z_i}= L_j$;} \\
          d-2(\deg( F^\natural_j)+1)+\sum\limits_{i\neq j,L_i\neq F^\natural_j|_{z_i}}w_i-\sum\limits_{i\neq j,L_i= F^\natural_j|_{z_i}}w_i+w_j, & \hbox{$ F|_{z_i}\neq L_j$,}
        \end{array}
      \right.\\
&=s_{\overrightarrow{ w^\natural}}(F^\natural_j).
\end{align*}
 The conclusion thus follows.
\end{proof}

 Let $(E, \mathcal{L},\nabla)$ be a $\overrightarrow{\nu}$-parabolic  logarithmic flat  bundle of  type $(d,\overrightarrow{d})$ over $(C,D)$, and  $(E^\natural_j,\mathcal{L}_j^\natural)$ be the elementary transformation of the underlying parabolic bundle $(E, \mathcal{L})$ at the point $z_j$. Since $L_j$ is the eigenspace of $\mathrm{Res}_{z_j}(\nabla)$, $\nabla$ induces a logarithmic connection on
$E^\natural_j$, which is also denoted by $\nabla$, then the spectrum $\overrightarrow{ \nu^\natural}$ of $(E^\natural_j,\nabla)$ reads \cite{mk,l}
\begin{align*}
 (\nu^\natural)^{+}_{i}=\left\{
                        \begin{array}{ll}
                         \nu^{+}_{i} , & \hbox{$i\neq j$;} \\
                          1+\nu^{-}_{i}, & \hbox{$i=j$,}
                        \end{array}
                      \right.\   (\nu^\natural)^{-}_{i}=\left\{
                        \begin{array}{ll}
                         \nu^{-}_{i} , & \hbox{$i\neq j$;} \\
                          \nu^{+}_{i}, & \hbox{$i=j$.}
                        \end{array}
                      \right.
\end{align*}
The $\overrightarrow{ \nu^\natural}$-parabolic logarithmic flat bundle $(E^\natural_j, \mathcal{L}_j^\natural,\nabla)$ of type $(d-1,\overrightarrow{d})$ over $(C,D)$ is called the  \emph{elementary transformation} of $(E, \mathcal{L},\nabla)$ at the point $z_j$. In particular, if $\overrightarrow{\nu}$ is Kostov-generic/non-resonant, then so is $\overrightarrow{\nu^\natural}$. The \emph{twisted elementary transformation} of $(E, \mathcal{L},\nabla)$ at the point $z_j$ is defined as the elementary transformation of $(\check{E}_j, \check{\mathcal{L}}_j,\check{\nabla}_j)$ at $z_j$, where the logarithmic  connection $\check{\nabla}_j$ on $\check{E}_j$ is given by $\check{\nabla}_j=\nabla\otimes \mathrm{Id}+\mathrm{Id}\otimes \mathfrak{D}_j$ for the logarithmic connection $\mathfrak{D}_j=\mathrm{d}-\frac{dz}{z-z_j}$ on $\mathcal{O}(z_j)$.

\begin{theorem}\label{po}Assuming $\overrightarrow{\nu}$ is Kostov-generic, we have the   isomorphisms of moduli spaces
\begin{align*}
  \mathrm{Elm}_j: M(d,\overrightarrow{\nu})&\rightarrow M(d-1, \overrightarrow{\nu^\natural})\\
(E, \mathcal{L},\nabla)&\mapsto (E^\natural_j, \mathcal{L}_j^\natural,\nabla),\\
\check{\mathrm{Elm}}_j: M(d,\overrightarrow{\nu})&\rightarrow M(d+1, \overrightarrow{\nu^\natural})\\
(E, \mathcal{L},\nabla)&\mapsto ((\check{E}_j)^\natural_j, (\check{\mathcal{L}}_j)_j^\natural,\check{\nabla}_j).
\end{align*}

\end{theorem}

\section{Geometry of the Moduli Space of Indecomposable Parabolic Bundles }\label{ab}
In this appendix, we describe the birational geometry of $M^{\textrm{ind}}(1,\overrightarrow{d})$ for the case of $d=1, n=|D|=5$. In order to construct moduli spaces by variation of geometric invariant theory \cite{dh}, we should  impose  a common  stability condition on as many  parabolic bundles as possible.
For a  parabolic bundle $(E,\mathcal{L})$ of type $(1,\overrightarrow{d})$  over $(C,D)$, if it is $\overrightarrow{w}$-stable, we have $E\simeq B=\mathcal{O}_{\mathbb{P}^1}\oplus\mathcal{O}_{\mathbb{P}^1}(1)$ or $E\simeq B'=\mathcal{O}_{\mathbb{P}^1}(-1)\oplus\mathcal{O}_{\mathbb{P}^1}(2)$.

Firstly, we  consider  the parabolic structure $\mathcal{L}=\{L_i\}_{i=1,\cdots,5}$ on $B$ parameterized by  $(u_1,\cdots, u_5)\in (\mathbb{P}^1)^5$. Let  $\overrightarrow{w}$ be a  non-special weight system on $(B,\mathcal{L})$, then  a line subbundle $F\subset B$ is not a  $\overrightarrow{w}$-destabilizing bundle if
 \begin{align*}
 2\deg(F)-\sum_{L_i\neq F|_{z_i}}w_i+\sum_{L_i= F|_{z_i}}w_i<1.
\end{align*}
  Since $0< w_i<1$, the above inequality is automatically satisfied when $\deg(F)\leq -2$.
Hence, we should consider the following three cases.

\begin{description}
    \item[Case I $F\simeq\mathcal{O}_{\mathbb{P}^1}(-1)$] The morphism $F\hookrightarrow B$ is given by a pair $(q,r)$ with
                 \begin{align*}
                        q&=q_0+q_1z,\\
                        r&=r_0+r_1z+r_2z^2.
                 \end{align*}
        If $F|_{z_i}=L_i$ at the point $z_i\in D$, we must have
                      \begin{align*}
                             [q_0+q_1z_i:r_0+r_1z_i+r_2z_i^2]=u_i.
                      \end{align*}
        Introduce a matrix
              \begin{align*}
                   \Delta_{\mathcal{L}}=\left(
                      \begin{array}{ccccc}
                         1 & z_1 & z_1^2& u_1& u_1z_1 \\
                         1 & z_2 & z_2^2& u_2& u_2z_2 \\
                         1 & z_3 & z_3^2& u_3& u_3z_3 \\
                         1 & z_4 & z_4^2& u_4& u_4z_4 \\
                         1 & z_5 & z_5^2& u_5& u_5z_5 \\
                       \end{array}
                    \right),
                \end{align*}
        where if some $u_i=\infty$, the corresponding row vector is replaced by $(0,0,0,1,z_i)$. Note that when $n_\infty=3$, any  line subbundle of degree $-1$ is automatically  not a  $\overrightarrow{w}$-destabilizing bundle. The necessary and sufficient conditions of any  line subbundle of degree $-1$ being not a  $\overrightarrow{w}$-destabilizing bundle are listed as follows for different cases
            \begin{align*}
                 \begin{tabular}{|c|c|}
                      \hline
                         % after \\: \hline or \cline{col1-col2} \cline{col3-col4} ...
                 Case & Condition \\
                      \hline
                 $n_\infty=5$ & $\sum\limits_{i=1}^5w_i<3$ \\
                      \hline
                 $n_\infty=4$   & $ -w_j+ \sum\limits_{z_i\in D\backslash\{z_j\}}w_i<3$ for $\{z_j\}=D\backslash D_\infty$\\
                      \hline
                 $n_\infty=2$   &$ -w_j+ \sum\limits_{z_i\in D\backslash\{z_j\}}w_i<3$ for any $z_j\in D_\infty$\\
                      \hline
                 $n_\infty\leq 1, \det\Delta_{\mathcal{L}}=0$ & $\sum\limits_{i=1}^5w_i<3$\\
                       \hline
                 $n_\infty\leq 1, \det\Delta_{\mathcal{L}}\neq0$ & $ -w_j+ \sum\limits_{z_i\in D\backslash\{z_j\}}w_i<3$ for any $z_j\in D$\\
                       \hline
                 \end{tabular}.
                 \end{align*}

    \item[Case II $F\simeq\mathcal{O}_{\mathbb{P}^1}(1)$] There is a unique morphism $F\hookrightarrow B$, this subbundle is  not a  $\overrightarrow{w}$-destabilizing bundle if and only if
                \begin{align*}
                     1+\sum_{z_i\in D_\infty}w_i-\sum_{z_i\in\widehat{D_\infty}}w_i<0.
                \end{align*}
Obviously, if $\sum_{i=1}^5w_i<1$, the above inequality is never true, hence a necessary condition is
\begin{align*}
  \sum_{i=1}^5w_i>1.
\end{align*}
                                                       \item[Case III $F\simeq\mathcal{O}_{\mathbb{P}^1}$] The morphism $F\hookrightarrow B$ as a saturated subsheaf is given by $(1,r)$ with
                                                           \begin{align*}
                                                             r=r_0+r_1z,
                                                           \end{align*}
                                                           hence if $F|_{z_i}=L_i$ at the point $z_i\in D$, we have
                                                           \begin{align*}
                                                             r_0+r_1z_i=u_i.
                                                           \end{align*}
                                                           If $n_\infty=4,5$, any  line subbundle of degree $0$ is always  not a  $\overrightarrow{w}$-destabilizing bundle.   Assume $n_\infty\leq 3$, for a subset $D_1=\{z_{i_1},\cdots,z_{i_\ell}\}\subset\widehat{D_\infty}$, one defines
                                                           \begin{align*}
                                                             \mathrm{rk}_{D_1}=\mathrm{rank}\left(
                                                                                     \begin{array}{ccc}
                                                                                       1 & z_{i_1}&u_{i_1} \\
                                                                                       \vdots & \vdots&\vdots \\
                                                                                       1&z_{i_\ell}&u_{i_\ell}\\
                                                                                     \end{array}
                                                                                   \right),
                                                           \end{align*} and let
                                                           \begin{align*}
                                                             m_\infty=\max_{D_1\subset \widehat{D_\infty}, \ \mathrm{rk}_{D_1}=2}\{|D_1|\}
                                                           \end{align*}
                                                           Then any  line subbundle of degree $0$ is  not a  $\overrightarrow{w}$-destabilizing bundle if and only if
                               for any  subsets $D_1\subset \widehat{D_\infty}$ with
                               $|D_1|= m_\infty$,
                                 the following inequality holds
        \begin{align*}
                                                                \sum_{z_i\in D_1}w_i-\sum_{z_i\in {D\backslash  D_1}}w_i<1.
                                                              \end{align*}
                                                    \end{description}

 \begin{proposition}
 There exists some  non-special weight $\overrightarrow{w}$ such that all the indecomposable parabolic structures  on $B$ lying in $U_{i_1\cdots i_\ell}$ are $\overrightarrow{w}$-stable.
\end{proposition}

\begin{remark}
This proposition exhibits  a phenomenon that all indecomposable objects can be stabilized simultaneously. Such phenomenon also appears in other fields, for example, Reineke's conjecture for  representations of  Dynkin quivers \cite{r}. For $A_n$-type quivers, this conjecture has been settled independently by Hu--Huang \cite{hh}, Apruzzese--Igusa \cite{ai} and Kinser \cite{ki}, for general Dynkin quivers, this problem has been recently studied by Diza, Gilbert and Kinser \cite{dk}, and  the modified Reineke's conjecture \cite{hh} has been confirmed by Chang--Qiu--Zhang \cite{ch}.
\end{remark}

\begin{proof}
We only need to check the cases of $\ell\geq 3$. There are three cases.
 \begin{description}
   \item [Case I $\ell=5$] Firstly, note that when considering Case III as above we must have $m_\infty\leq 4$. Otherwise,  we have
\begin{align*}
  u_i-u_k=(u_i-u_j)\frac{z_i-z_k}{z_i-z_j}
\end{align*}
for  any three points $z_i,z_j,z_k\in D$, which means that
 \begin{align*}
   \det \Pi_{D}=\det \left(
                  \begin{array}{ccc}
                    z_1 & 1 & u_1 \\
                    \vdots & \vdots & \vdots \\
                    z_5 & 1 & u_5 \\
                  \end{array}
                \right)=0,
 \end{align*}
 hence there is an automorphism of $E$ to make  the parabolic structure $\mathcal{L}$  into an indecomposable one with all $u_i$'s being zero. Therefore, in order to find the desired weight system, it suffices to solve the following inequalities
\begin{align*}
  \sum_{i=1}^5w_i&<3,\\
  \sum_{i=1}^5w_i&>1,\\
  -w_j+ \sum_{z_i\in D\backslash\{z_j\}}w_i&<1,\ j=1,\cdots,5.
\end{align*}
 These inequalities always admit solutions, for example, pick $w_1=\cdots=w_5=w$ with
\begin{align*}
  \frac{1}{5}<w<\frac{1}{3}.
\end{align*}
   \item [Case II $\ell=4$]Assume $u_i=\infty$. We similarly solve the  inequalities
\begin{align*}
  \sum_{j=1}^5w_j&<3,\\
 1+w_i-\sum_{z_j\in D\backslash\{z_i\}}w_j&<0,\\
   \sum_{z_j\in D_1}w_j-\sum_{z_j\in D\backslash D_1}w_j&<1,
\end{align*}
where $D_1$ is any subset of $D\backslash\{z_i\}$ with $|D_1|=3$. These inequalities also have  solutions $w_1=\cdots=w_5=w$ with
\begin{align*}
  \frac{1}{3}<w<\frac{3}{5}.
\end{align*}
   \item[Case III $\ell=3$] Assume $u_i=u_j=\infty$. The following inequalities
\begin{align*}
  -w_k+ \sum_{z_l\in D\backslash\{z_k\}}w_l&<3,\ k=i,j,\\
  1+w_i+w_j-\sum_{z_l\in D\backslash\{z_i,z_j\}}w_l&<0,\\
   \sum_{z_l\in D_1}w_l-\sum_{z_l\in D\backslash D_1}w_l&<1,
\end{align*}
where $D_1$ is any subset of $D\backslash\{z_i,z_j\}$ with $|D_1|=2$, have solutions
\begin{align*}
  \left\{
  \begin{array}{ll}
    w_i=w,\ w_j=1-w,  \\
    w_k= w,\  k\neq i, j
  \end{array}
\right.
\end{align*}
with\begin{align*}
      \frac{2}{3}<w<\frac{4}{5}.
    \end{align*}
 \end{description}
Therefore, we complete the proof.
\end{proof}

 \begin{theorem}\label{zzz}
      There is a  coarse moduli space $ M^{\mathrm{ind}}(0,1,\overrightarrow{d})$ associate to the moduli stack $N^{\mathrm{ind}}(0,1,\overrightarrow{d})$, which is given by
      \begin{align*}
      M^{\mathrm{ind}}(0,1,\overrightarrow{d})=P_0\biguplus P_1\biguplus P_2\biguplus P_3\biguplus P_4\biguplus P_5,
  \end{align*}
  where $P_0$ is isomorphic to a Del Pezzo surface of degree 4, $P_1$ is isomorphic to $\mathbb{P}^2$, $P_2, P_3, P_4, P_5$ are all isomorphic to  $ \mathbb{P}^2$ minus one point,
  and $\biguplus$ stands for  patching two charts via the morphisms from Del Pezzo surface of degree 4 to $ \mathbb{P}^2$  along the maximal open subsets where they are
one-to-one.
           \end{theorem}
        \begin{proof}
  By variation of geometric invariant theory, the charts of  the coarse moduli space $ M^{\mathrm{ind}}(0,1,\overrightarrow{d})$ can be constructed by imposing stability condition with respect to certain weight,  and there are wall-crossing  phenomena  when changing the weights, hence the corresponding charts are related by a special birational transformation (flip) \cite{bo,dh}. Let $ M^{\overrightarrow{w}}(0,1,\overrightarrow{d})$ be the coarse moduli space of $\overrightarrow{w}$-stable parabolic structures on $B$ for a fixed non-special weight $\overrightarrow{w}$.
  \begin{description}
    \item[Step 1] Choose $\overrightarrow{w_1}=(w,\cdots,w)$ with $\frac{1}{5}<w<\frac{1}{3}$. We have seen that
  \begin{align*}
   M^{\overrightarrow{w_1}}(0,1,\overrightarrow{d})=U'_{12345}/A_B\simeq \mathbb{P}^2.
  \end{align*}
    \item[Step 2] Change the weight into $\overrightarrow{w_2}=(w,\cdots,w)$ with $\frac{1}{3}<w<\frac{3}{5}$.  One easily finds the followings.
    \begin{itemize}
      \item Some parabolic structures lying in $U'_{12345}$ become unstable. These parabolic structures must be of $m_\infty=4$,  then in the moduli space $M^{\overrightarrow{w_1}}(0,1,\overrightarrow{d})$ they are five points that represent the equivalence classes of
          \begin{align*}
            \mathcal{L}=(1,0,0,0,0), (0,1,0,0,0), (0,0,1,0,0), (0,0,0,1,0), (0,0,0,0,1)\in U'_{12345}.
          \end{align*}
         We denote these five points  by $A_1, A_2, A_3, A_4, A_5$, respectively, and it is clear that they are in generic position.
      \item The  parabolic structures lying in $U'_{1\cdots\hat{i}\cdots5}$, $i=1,\cdots,5$,  become stable, and we have shown that \begin{align*}
                                                                                 U'_{1\cdots\hat{i}\cdots5}/A_B\simeq\mathbb{P}^1.
                                                                                                                                    \end{align*}
    \end{itemize}
    Therefore, the moduli space $ M^{\overrightarrow{w_2}}(0,1,\overrightarrow{d})$  is the blowing-up of $ M^{\overrightarrow{w_1}}(0,1,\overrightarrow{d})$ at the  points $A_1,A_2, A_3, A_4,A_5$ with the corresponding  exceptional divisors  denoted by $C_1, C_2, C_3, C_4, C_5$, respectively. Namely,  $M^{\overrightarrow{w_2}}(0,1,\overrightarrow{d})$ is exactly isomorphic to a  Del Pezzo surface of degree 4. This  blowing-up is denoted  by
    $
      \flat_1: M^{\overrightarrow{w_2}}(0,1,\overrightarrow{d})\rightarrow M^{\overrightarrow{w_1}}(0,1,\overrightarrow{d})$.

    \item[Step 3] Consider the weight $\overrightarrow{w_{12}}=(w,1-w,w,w,w)$ with  $ \frac{2}{3}<w<\frac{4}{5}$. Similarly, we have the followings.
    \begin{itemize}
      \item
    The  parabolic structures lying in $U'_{12345}\coprod (\coprod_{i=1}^5U'_{1\cdots\hat{i}\cdots5})$  that are not $\overrightarrow{w_{12}}$-stable are described  as follows.
  \begin{itemize}
  \item The equivalence classes corresponding to $A_1,A_2,A_3,A_4,A_5$ mentioned previously, and the equivalence classes of
        \begin{align*}
         \mathcal{L}= (u_1,u_2,0,0,0), (0,u_2,u_3,0,0), (0,u_2,0, u_4,0), (0,u_2,0,0,u_5)\in U'_{12345}
        \end{align*}
         with $u_i\neq 0,i=1,\cdots,5$. We denote the projective line  in the moduli space  $M^{\overrightarrow{w_1}}(0,1,\overrightarrow{d})$ through the points $A_i, A_j$ by $C_{ij}$, then  these parabolic structures  in the moduli space  $M^{\overrightarrow{w_1}}(0,1,\overrightarrow{d})$ are parameterized by  $C_{12}, C_{23}, C_{24}, C_{25}$.
    \item The equivalence classes of $\mathcal{L}=(u_1,\cdots,u_5)\in U'_{12345}$ with $\det\Delta_{\mathcal{L}}=0$. They can take the  form $\mathcal{L}=(u_1,u_2,u_3,0,0)$ with $\mathbb{C}^\times$-action,  then the constraint $\det\Delta_{\mathcal{L}}=0$ is reduced to
        \begin{align*}
          \frac{(z_2-z_4)(z_2-z_5)}{(z_2-z_1)(z_2-z_3)}u_1u_3+\frac{(z_1-z_4)(z_1-z_5)}{(z_1-z_2)(z_1-z_3)}u_2u_3+\frac{(z_3-z_4)(z_3-z_5)}{(z_3-z_1)(z_3-z_2)}u_1u_2=0.
        \end{align*}
       One easily checks that
       \begin{align*}
        (u_1,u_2,u_3)=&\
        (1,0,0),(0,1,0),(0,0,1),\\
   &\ (z_1-z_4,z_2-z_4,z_3-z_4), (z_1-z_5,z_2-z_5,z_3-z_5)
                        \end{align*}
   are solutions to the above equation. Hence, these parabolic structures in the moduli space  $M^{\overrightarrow{w_1}}(0,1,\overrightarrow{d})$ are parameterized by the unique  conic through five points  $A_1, A_2, A_3, A_4, A_5$, denoted by $\mathfrak{C}$, which is isomorphic to $\mathbb{P}^1$.
 \item The equivalence classes of
 \begin{align*}
  K_1&= (\infty,1,0, 0,0),  K_2=(\infty,1,\frac{(z_2-z_1)(z_3-z_4)(z_3-z_5)}{(z_3-z_1)(z_2-z_4)(z_2-z_5)}, 0,0)\in C_{1},; \\
   K_3&= (1,\infty,0, 0,0), K_4= (0,\infty,1, 0,0),  K_5= (0,\infty,0,1,0),  K_6=(0,\infty,0,0,1),\\
K_7&= (1,\infty,\frac{(z_1-z_2)(z_3-z_4)(z_3-z_5)}{(z_3-z_2)(z_1-z_4)(z_1-z_5)}, 0,0) \in C_{2};  \\
                K_8&=  (0,1,\infty, 0,0), K_9=(\frac{(z_2-z_3)(z_1-z_4)(z_1-z_5)}{(z_1-z_3)(z_2-z_4)(z_2-z_5)},1,\infty, 0,0)\in C_{3};\\
                   K_{10}&= (0,1,0,\infty,0), K_{11}=(\frac{(z_1-z_3)(z_2-z_4)(z_1-z_5)}{(z_2-z_3)(z_1-z_4)(z_2-z_5)},1,0,\infty,0)\in C_{4};\\
                   K_{12}&= (0,1,0,0, \infty), K_{13}=(\frac{(z_1-z_3)(z_1-z_4)(z_2-z_5)}{(z_2-z_3)(z_2-z_4)(z_1-z_5)},1,0,0,\infty)\in C_{5}.
                  \end{align*}
  \end{itemize}

      \item The parabolic structures lying in $U'_{345}, U'_{145}, U'_{135}, U'_{134}$ are all $\overrightarrow{w}_{12}$-stable, which are the  equivalence classes of
      \begin{align*}
        \mathcal{L}=(\infty,\infty,1,0,0), (1,\infty,\infty,0,0), (1,\infty,0,\infty,0), (1,\infty,0,0,\infty),
      \end{align*}
      respectively. Note that they are infinitesimally closed to
      $C_1, C_3, C_4, C_5$.
      respectively.
    \end{itemize}

With respect to the blowing-up $\flat_1$, the strict transforms of the line $C_{ij}\in M^{\overrightarrow{w_1}}(0,1,\overrightarrow{b})$ and the conic $\mathfrak{C}\in M^{\overrightarrow{w_1}}(0,1,\overrightarrow{b})$ are   denoted by   $\widetilde{C}_{ij}\in M^{\overrightarrow{w_2}}(0,1,\overrightarrow{b})$ and $\widetilde{\mathfrak{C}}\in M^{\overrightarrow{w_2}}(0,1,\overrightarrow{b})$, respectively. Contracting five  $(-1)$-curves $\widetilde{C}_{12}, \widetilde{C}_{23}, \widetilde{C}_{24}, \widetilde{C}_{25}, \widetilde{\mathfrak{C}}$ gives rise to a morphism $\flat_{12}:M^{\overrightarrow{w_2}}(0,1,\overrightarrow{b})\rightarrow \check{M}^{\overrightarrow{w_{12}}}(0,1,\overrightarrow{b})$, where the projective variety $\check{M}^{\overrightarrow{w_{12}}}(0,1,\overrightarrow{b})$ is
  isomorphic to $\mathbb{P}^2$. The images of the following points  under $\flat_{12}$ are coincides
  \begin{align*}
   K_1 \textrm{ and } K_3; \\
   K_4 \textrm{ and } K_8;\\
   K_5 \textrm{ and } K_{10};\\
   K_6\textrm{ and } K_{12};\\
   K_2, K_7, K_9 \textrm{ and } K_{13},
  \end{align*}
  and the corresponding points in $\check{M}^{\overrightarrow{w_{12}}}(0,1,\overrightarrow{b})$ are denoted by $Q_1, Q_2, Q_3, Q_4, Q_5$, respectively.
   Let $\check{C}_1, \check{C}_2, \check{C}_3,\check{C}_4, \check{C}_5$ be the images  of $C_1, C_2, C_3,C_4, C_5$ under $\flat_{12}$, then $\check{C}_1,  \check{C}_3,\check{C}_4, \check{C}_5$ intersect $\check{C}_2$ at $Q_1$ and $Q_5$,  $Q_2$ and $Q_5$,  $Q_3$ and $Q_5$,  $Q_4$ and $Q_5$, respectively.
   Therefore, the moduli space $M^{\overrightarrow{w}_{12}}(0,1,\overrightarrow{b})$ is  $M^{\overrightarrow{w}_{12}}(0,1,\overrightarrow{b})\backslash
   \{Q_5\}$
Similarly, for the weights $\overrightarrow{w_{13}}=(w,w, 1-w,w,w,), \overrightarrow{w_{14}}=(w,w, w,1-w,w,), \overrightarrow{w_{15}}=(w,w, w,w, 1-w)$ with  $ \frac{2}{3}<w<\frac{4}{5}$,  we have the moduli spaces $M^{\overrightarrow{w_{13}}}(0,1,\overrightarrow{b})$, $M^{\overrightarrow{w_{14}}}(0,1,\overrightarrow{b}), M^{\overrightarrow{w_{15}}}(0,1,\overrightarrow{b})$,  which are all isomorphic to  $ \mathbb{P}^2$ minus one point, and the corresponding morphisms  $\flat_{13}:M^{\overrightarrow{w_2}}(0,1,\overrightarrow{b})\rightarrow \check{M}^{\overrightarrow{w_{13}}}(0,1,\overrightarrow{b})$, $\flat_{14}:M^{\overrightarrow{w_2}}(0,1,\overrightarrow{b})\rightarrow \check{M}^{\overrightarrow{w_{14}}}(0,1,\overrightarrow{b})$, $\flat_{15}: M^{\overrightarrow{w_2}}(0,1,\overrightarrow{b})\rightarrow \check{M}^{\overrightarrow{w_{15}}}(0,1,\overrightarrow{b})$.
  \end{description}
 Combining the above steps, we can construct the moduli space as
  \begin{align*}
   M^{\mathrm{ind}}_B= M^{\overrightarrow{w_1}}(0,1,\overrightarrow{b})\biguplus M^{\overrightarrow{w_2}}(0,1,\overrightarrow{b})\biguplus M^{\overrightarrow{w_{12}}}(0,1,\overrightarrow{b})\biguplus M^{\overrightarrow{w_{13}}}(0,1,\overrightarrow{b})\biguplus
   M^{\overrightarrow{w_{14}}}(0,1,\overrightarrow{b})\biguplus
   M^{\overrightarrow{w_{15}}}(0,1,\overrightarrow{b}),
  \end{align*}
  where $\biguplus$ means that we patch  these charts via   $\flat_{1}, \flat_{12}, \flat_{13}, \flat_{14}, \flat_{15}$ along the
maximal open subsets where they are one-to-one.
  \end{proof}

Similar arguments also work for the parabolic structure $\mathcal{L}=\{L_i\}_{i=1,\cdots,5}$ on $B'$ parameterized by  $(u_1,\cdots, u_5)\in (\mathbb{P}^1)^5$. Let $F$ be a line subbundle of $B'$.
\begin{description}
                                                      \item[Case I $F\simeq\mathcal{O}_{\mathbb{P}^1}(-1)$]
                                                      The morphism $F\hookrightarrow B'$ is given by a pair $(1,r)$ with
                                                  \begin{align*}
                                                  r=r_0+r_1z+r_2z^2+r_3z^3,
                                                  \end{align*}
                                                      hence if $F|_{z_i}=L_i$ at the point $z_i\in D$, we have
                                                           \begin{align*}
                                                             r_0+r_1z_i+r_2z_i^2+r_3z_i^3=u_i.
                                                           \end{align*}
              If $n_\infty\geq 2$, any line subbundle of degree $-1$ is always not a $\overrightarrow{w}$-destabilizing bundle. The necessary and sufficient conditions of any line subbundle of degree $-1$ being not a  $\overrightarrow{w}$-destabilizing bundle are listed as follows for different cases
                                                \begin{align*}
                                                  \begin{tabular}{|c|c|}
                                                      \hline
                                                      % after \\: \hline or \cline{col1-col2} \cline{col3-col4} ...
                                                       Case & Condition \\
                                                       \hline
                                                      $n_\infty=1$ & $-w_j+ \sum\limits_{z_i\in D\backslash\{z_j\}}w_i<3$ for $\{z_j\}=D_\infty$ \\
                                                  \hline
                                                  $n_\infty=0, \det \Pi_{D}=0$ & $\sum\limits_{i=1}^5w_i<3$ \\
                                                 \hline
                                                  $n_\infty=0, \det \Pi_{D}\neq0$ & $ -w_j+ \sum\limits_{z_i\in D\backslash\{z_j\}}w_i<3$ for any $z_j\in D$\\
                                                  \hline
                                                    \end{tabular},
                                                \end{align*}
                                                where
                                                \begin{align*}
                                                  \Pi_{D}=\left(
                                                                                     \begin{array}{ccccc}
                                                                                       z_1^3 & z_{1}^2&z_1 &1&u_1\\
                                                                                      z_2^3& z_{2}^2&z_2 &1&u_2\\
                                                                                      z_3^3& z_{3}^2&z_3 &1&u_3\\
                                                                                       z_4^3 & z_{4}^2&z_4 &1&u_4\\
                                                                                       z_5^3&z_{5}^2& z_5 &1&u_5\\
                                                                                     \end{array}
                                                                                   \right).
                                                \end{align*}
     \item[Case II $F\simeq\mathcal{O}_{\mathbb{P}^1}(2)$] There is a unique morphism $F\hookrightarrow B$, this subbundle is not a $\overrightarrow{w}$-destabilizing bundle  if and only if
          \begin{align*}
                                                          3+\sum_{z_i\in D_\infty}w_i-\sum_{z_i\in\widehat{D_\infty}}w_i<0.
                                                          \end{align*}
Obviously, if $\sum_{i=1}^5w_i<3$, the above inequality is never true, hence a necessary condition is
\begin{align*}
  \sum_{i=1}^5w_i>3.
\end{align*}
                                                     \end{description}

\begin{theorem}\label{p}There is a coarse moduli space $ M^{\mathrm{ind}}(1,\overrightarrow{d})$ associated to the  moduli stack $ N^{\mathrm{ind}}(1,\overrightarrow{d})$, which is
       given by
      \begin{align*}
      M^{\mathrm{ind}}(1,\overrightarrow{d})=P_0\biguplus P_1\biguplus P_2\biguplus P_3\biguplus P_4,
  \end{align*}
  where $P_0$ is isomorphic to a Del Pezzo surface of degree 4, $P_1, \cdots, P_4$ are all isomorphic to  $ \mathbb{P}^2$,
  and $\biguplus$ stands for  patching two charts via the morphisms  from Del Pezzo surface of degree 4 to $ \mathbb{P}^2$  along the maximal open subsets where they are
one-to-one.
           \end{theorem}
\begin{proof}Note that the parabolic structure $\mathcal{L}=(1,0,0,0,0)$ on $B'$ is $\overrightarrow{w}$-stable if $\overrightarrow{w}=(w,1-w,w,w,w), (w,w,1-w,w,w), (w,w,w,1-w,w), (w,w,w,w,1-w)$ with $\frac{2}{3}<w<\frac{4}{5}$, and it is infinitesimally closed to $\widetilde{\mathfrak{C}}$. Then the theorem follows from Theorem \ref{zzz}.
\end{proof}

\section{Middle Convolution}\label{C}

Take two copies of $C$, which are denoted by $Y, Z$, and view $D$ or $\mathcal{D}$ as the subsets of  $Y$ and $Z$ or  divisors on  $Y$ and $Z$, and denote them by $D_Y, D_Z$ or $\mathcal{D}_Y, \mathcal{D}_Z$, respectively.
The diagonal configuration is defined as a pair $(Y\times Z,V)$,
where
\begin{align*}
  V=(Y\times D_Z)\bigcup(Z\times D_X)\bigcup\Delta_{Y\times Z}
\end{align*}
for the  diagonal $\Delta_{Y\times Z}$ in $Y\times Z$.
After blowing up the  diagonal configuration at the  $n$ triple points $(z_1,z_1),\cdots,(z_n,z_n)$, we get a variety $T$ with a birational map
$f:T\rightarrow Y\times Z$.
Let $\mathcal{J}$ be the corresponding reduced  divisor on $T$ of $J=f^{-1}(V)$, then $\mathcal{J}$ is decomposed as
\begin{align*}
 \mathcal{J}=K+\sum_{i=1}^nH_i+\sum_{i=1}^nG_i+\sum_{i=1}^nU_i,
\end{align*}
where $K$  is the strict transformation of $\Delta_{Y\times Z}$,  $H_i$ is the strict transformation of $\{z_i\}\times Z$, $G_i$ is the strict transformation of $Y\times\{z_i\}$, and $U_i$ is  the exceptional divisor corresponding to $(z_i,z_i)$. Note that each $U_i$ transversally intersects with $L, H_i, G_i$ at three distinct points, and $H_i$ transversally intersects with $G_j$ whenever $i\neq j$. Define the compositions  $\xi: T\xrightarrow{f} Y\times Z\xrightarrow{p_1} Y, \eta: T\xrightarrow{f}Y\times Z\xrightarrow{p_2} Z$, where $p_1, p_2$ are  the natural projections on the first and the second factors, respectively.

Choosing  $\beta\in H^0(T, \Omega^1_{T}(\log\mathcal{J}))$,  let $\beta^{K}, \beta^{H_i}, \beta^{V_i}, \beta^{U_i}$ be the residues along $K$,  $H_i$, $V_i$, $U_i$, respectively, then they are subject to the following conditions
\begin{align*}
  \beta^{K}+\sum_{i=1}^n\beta^{H_i}&=0,\\
  \beta^{K}+\sum_{i=1}^n\beta^{V_i}&=0,\\
  \beta^{U_i}-\beta^{K}+\beta^{H_i}+\beta^{V_i}&=0,\ \  i=1,\cdots, n.
\end{align*}
Now given  a  logarithmic flat bundle $(E,\nabla)$ of rank 2 and degree $d$ over $(Y,D_Y)$ with the spectrum $\overrightarrow{\nu}$ and  $\beta\in H^0(T, \Omega^1_{T}(\log \mathcal{J}))$, we define a  logarithmic flat bundle $(F,\nabla_F)$ on $(T, J)$ as
\begin{align*}
 (F,\nabla_F)=\xi^*(E,\nabla)\otimes (\mathcal{O}_T, \mathrm{d}+\beta),
\end{align*}
and define the defect
\begin{align*}
  \delta(\overrightarrow{\nu},\beta)=2(n-2)-\sum_{i=1}^n\delta(\beta^{H_i}),
\end{align*}
where
\begin{align*}
  \delta(\beta^{H_i})=\left\{
                        \begin{array}{ll}
                          1, & \hbox{$\beta^{H_i}=-\nu_i^+$ \textrm{or }$-\nu_i^-$;} \\
                          0, & \hbox{\textrm{otherwise}.}
                        \end{array}
                      \right.
\end{align*}

Consider the logarithmic de Rham complex
        \begin{align*}
        \mathbf{DR}_{T}(F,\nabla_F)&: F\xlongrightarrow{\nabla_F}F\otimes \Omega^1_{T}(\log\mathcal{J})\xlongrightarrow{\nabla_F}F\otimes \Omega^2_{T}(\log\mathcal{J}),
        \end{align*}
 and the relative logarithmic de Rham complex $\mathbf{DR}_{T/Z}(F,\nabla_F)$ is defined by the short exact sequence of complexes
\begin{align*}
  0\rightarrow \mathbf{DR}_{T}(F,\nabla_F)[-1]\otimes \eta^*\Omega^1_Z(\mathcal{D}_Z)\rightarrow\mathbf{DR}_{T}(F,\nabla_F)\rightarrow\mathbf{DR}_{T/Z}(F,\nabla_F)\rightarrow0.
\end{align*}
 Then the connecting morphism of the above short exact sequence defines the logarithmic  Gauss--Manin connection
\begin{align*}
 \nabla^{\textrm{GM}}: \mathbb{R}^1\eta_*\mathbf{DR}_{T/Z}(F,\nabla_F)\rightarrow\mathbb{R}^1\eta_*\mathbf{DR}_{T/Z}(F,\nabla_F)\otimes \Omega^1_Z(\mathcal{D}_Z).
\end{align*}

Let $J_0=K\bigcup\limits_{i=1}^nH_i$, and define the sheaf $F^0_{J_0/Z}$ by  $F^0_{J_0/Z}|_{\eta^{-1}(z)}=\bigcup\limits_{\mathfrak{z}\in \eta^{-1}(z)\bigcap J_0}F^0_\mathfrak{z}$ for any $z\in Z$, where $F^0_\mathfrak{z}$ is the 0-eigenspace of  $\mathrm{Res}_{\mathfrak{z}}(\nabla_F)$. The middle relative logarithmic de Rham complex $\mathbf{MDR}_{(T/Z,J_0)}(F,\nabla_F)$ associated to $(F,\nabla_F)$ is defined by the following short exact sequence of complexes
\begin{align*}
 0\rightarrow \mathbf{MDR}_{(T/Z,J_0)}(F,\nabla_F)\rightarrow\mathbf{DR}_{T/Z}(F,\nabla_F)\rightarrow F^0_{J_0/Z}[-1]\rightarrow0,
\end{align*}
which yields an injection $\mathbb{R}^1\eta_*\mathbf{MDR}_{(T/Z,J_0)}(F,\nabla_F)\hookrightarrow \mathbb{R}^1\eta_*\mathbf{DR}_{T/Z}(F,\nabla_F)$.
The \emph{middle convolution} of $(E,\nabla)$ associated  to $\beta$ is defined as the pair
 \begin{align*}
   \mathrm{MC}_\beta(E,\nabla)=(\mathbb{R}^1\eta_*\mathbf{MDR}_{(T/Z,J_0)}(F,\nabla_F),\nabla^{\textrm{GM}}),
 \end{align*}
 where the restriction of $\nabla^{\textrm{GM}}$ on $\mathbb{R}^1\eta_*\mathbf{MDR}_{(T/Z,J_0)}(F,\nabla_F)$ is denoted by the same notation.

\begin{proposition}Fixing a non-special $\overrightarrow{\nu}\in R(d)$, let $(E,\nabla)$ be a logarithmic flat bundle of rank 2 and degree $d$ over  $(C,D)$ with the spectrum $\overrightarrow{\nu}$, and choose $\beta\in H^0(T, \Omega^1_{T}(\log\mathcal{J}))$ such that
\begin{align*}
   \beta^{H_i}=-\nu_i^+\textrm{ or }-\nu_i^-, i=1,\cdots,n.
   \end{align*}
   Then the middle convolution $\mathrm{MC}_\beta(E,\nabla)$ is a logarithmic flat bundle of degree $d$ and rank $n-2$ over  $(C,D)$ with the semisimple residues and the  spectrum
             \begin{align*}
  \{\overrightarrow{\mu^{(i)}}\}_{i=1,\cdots,n}=(\underbrace{\beta^{V_i},\cdots,\beta^{V_i}}_{n-3},d+ \beta^{V_i}-\nu^{H_i}-\sum_{j\neq i}\nu_j^{\breve{H_j}})\}_{i=1,\cdots,n},
\end{align*}
  where
\begin{align*}
 \nu_i^{\sigma}=\left\{
                  \begin{array}{ll}
                    \nu_i^{H_i}, & \hbox{$\beta^{H_i}=-\nu_i^{\sigma}$;} \\
                    \nu_i^{\breve{H_i}}, & \hbox{$\rm{otherwise}$,}
                  \end{array}
                \right.
\end{align*}
for $\sigma\in\{+,-\}$. Moreover, denote by $M^{\textrm{ss}}(d,n-2,\{\overrightarrow{\mu^{(i)}}\}_{i=1,\cdots,n})$  the moduli space of logarithmic flat bundles of degree $d$ and rank $n-2$ over $(C,D)$ with the  semisimple (i.e. diagonalizable) residues and the spectrum $\{\overrightarrow{\nu^{(i)}}\}_{i=1,\cdots,5}$,  where
\begin{align*}
 \overrightarrow{\mu^{(i)}}=(\underbrace{\mu_i^+,\cdots,\mu_i^+}_{n-3},\mu_i^-),\  i=1,\cdots,n
\end{align*}
satisfy the following conditions
 \begin{itemize}
 \item $\sum\limits_{z_i\in D'}(\mu_i^+-\mu_i^-)-(|D'|-1)\sum\limits_{i=1}^n\mu_i^+\notin \mathbb{Z}$ for any subset $D'\subset D$;
   \item $\mu_i^+-\mu_i^--\sum\limits_{i=1}^n\mu_i^+\notin \mathbb{Z}$,
    \end{itemize}
 then the middle convolution provides an isomorphism
\begin{align*}
  \mathrm{MC}_\beta:M(d,\overrightarrow{d},\overrightarrow{\nu})&\rightarrow M^{\textrm{ss}}_{\mathrm{dR}}(d,n-2,\{\overrightarrow{\mu^{(i)}}\}_{i=1,\cdots,n})\\
(E,\nabla)&\mapsto \mathrm{MC}_\beta(E,\nabla).
\end{align*}
\end{proposition}
\begin{proof}Firstly, one  easily checks the following conditions
\begin{align*}
 \beta^K\notin \mathbb{Z},\  \nu_i^\sigma+\beta^{H_i}+\beta^K\notin \mathbb{Z},\ \nu_i^\sigma+\beta^{H_i}\notin \mathbb{Z}\backslash\{0\}
\end{align*}
for any $i=1,\cdots,n$ due to the spectrum of $(E,\nabla)$ being non-special.
Then one can apply \cite [Theorem 5.9] {si}, which yields the following properties
of  the logarithmic flat bundle $ \mathrm{MC}_\beta(E,\nabla)$.
\begin{itemize}
   \item The rank $r_\beta$ of $\mathbb{R}^1\eta_*\mathbf{MDR}_{(T/Z,J_0)}(F,\nabla_F)$ is exactly the dimension of $H^1(Y,j_*\mathbb{L}_q)$ (\cite[Lemma 2.7]{si}),
where $j:Y\backslash (D_Y\bigcup q)\hookrightarrow Y$ is the natural embedding for a point $q\in Y$ not lying $D_Y$, $\mathbb{L}_q$ is the local system on $Y\backslash (D_Y\bigcup q)$ associated to the logarithmic  flat bundle $(F,\nabla_F)|_{\eta^{-1}(q)}$. It is calculated as (\cite[Corollary 2.8]{si})
\begin{align*}
  r_\beta&=2+\delta(\overrightarrow{\nu},\beta)\\
  &=2+2(n-2)-n=n-2.
\end{align*}

   \item The residue $\mathrm{Res}_{z_i}(\nabla^{\textrm{GM}})$ has two distinct  eigenvalues  given by Katz transformations (\cite[Scholium 4.9 and Scholium 5.8]{si}). More precisely,  one is exactly
$\mu_i^+=\beta^{V_i}$
with the  multiplicity
\begin{align*}
  m(\beta^{V_i})&=\delta(\beta^{H_i})+\delta(\overrightarrow{\nu},\beta)\\
  &=1+2(n-2)-n=n-3,
\end{align*}
 and the other one is
\begin{align*}
\mu_i^- =\beta^{U_i}+ \nu_i^{\breve{H_i}}&=\beta^{V_i}+\beta^K+\beta^{H_i}+\nu_i^{\breve{H_i}}\\
 &=\beta^{V_i}+\nu_i^{\breve{H_i}}+\sum_{j\neq i}\nu_j^{H_j}\\
&=d+\beta^{V_i}-\nu^{H_i}-\sum_{j\neq i}\nu_j^{\breve{H_j}}.\end{align*}
It is clear  that  $\overrightarrow{\nu}$ being non-special is equivalent to  $\{\overrightarrow{\mu^{(i)}}\}_{i=1,\cdots,n}$
satisfying the conditions in Proposition.

    \item The degree $d_\beta$ of $\mathbb{R}^1\eta_*\mathbf{MDR}_{(T/Z,J_0)}(F,\nabla_F)$ reads by Fuchs relation as
\begin{align*}
  d_\beta&=-\sum_{i=1}^n((n-2)\beta^{V_i}+\nu_i^{\breve{H_i}}+\sum_{j\neq i}\nu_j^{H_j})\\
&=-(n-2)\beta^{V_i}-(\sum_{i=1}^n\nu_i^{\breve{H_i}}+(n-1)\sum_{i=1}^n\nu_i^{H_i})\\
&=d-(n-2)\sum_{i=1}^n(\beta^{V_i}-\beta^{H_i})\\
&=d.
\end{align*}
 \end{itemize}
 The inverse of $\mathrm{MC}_\beta$ is given by $\mathrm{MC}_{-c^*\beta}$, where $c:T\rightarrow T$ is the automorphism which flips the factors (\cite[Lemma 4.14 and Theorem 5.9]{si}).
\end{proof}

\section{Nonabelian Hodge Correspondence}\label{F}

In this appendix, $ C $ is a general compact Riemann surface.

\begin{definition} 
Let $(\tilde E,\tilde \theta)$ be a Higgs bundle over $\tilde C:=C\backslash D$, and let $h$ be a Hermitian metric on $\tilde E$.
\begin{enumerate}
\item The metric $h$ is called \emph{harmonic} if
\begin{align*}
 R(h)+[\tilde \theta,\tilde \theta^{*_h}]=0,
\end{align*}
where  $R(h)$ stands for the curvature of the metric  connection.
\item The metric $h$ is called \emph{acceptable} if near each point $z_i\in D$ \begin{align*}
 |R(h)|_h\leq f^{(i)}+\frac{\delta}{|t^{(i)}|^2(\log|t^{(i)}|)^2}
\end{align*}
for some constant $\delta$ and $f^{(i)}\in L^p(U^{(i)}\backslash\{z_i\})$ with $p>1$, where $t^{(i)}$ is the coordinate over a  small enough neighborhood $U^{(i)}$ of $z_i$ in $C$ such that $t^{(i)}=0$ corresponds to $z_i$.
\item The Higgs bundle $(\tilde E,\tilde \theta)$ is called \emph{tame} if writing $\theta=\Theta^{(i)} dt^{(i)}$ over $U^{(i)}\backslash\{z_i\}$ with $\Theta^{(i)}\in \End(\tilde E|_{U^{(i)}\backslash\{z_i\}})$, any  eigenvalue  $e(\Theta^{(i)})$ of $\Theta^{(i)}$ satisfies
    \begin{align*}
      |e(\Theta^{(i)})|\leq\frac{\delta}{|t^{(i)}|}
    \end{align*}
     for some constant $\delta$.
     \item The triple $(\tilde E,\tilde \theta,h)$ is called a \emph{tame harmonic bundle} over $\tilde C$ if $h$ is a harmonic metric and $(\tilde E,\tilde \theta)$ is a tame Higgs bundle.
\end{enumerate}
\end{definition}

For a Higgs bundle $(\tilde E,\tilde \theta)$ over $\tilde C$ with an acceptable metric $h$, by Cornalba--Griffiths theory \cite{cg}, one can define a coherent sheaf $\tilde E_{\overrightarrow{\alpha}}$ over $C$ for a vector $\overrightarrow{\alpha}=(\alpha^{(1)},\cdots, \alpha^{(n)})$ with $\alpha^{(i)}\in \mathbb{R},i=1,\cdots, n$, called an $h$-metric extension of $\tilde E$, as follows \cite{s,t}. Let $t^{(i)}:\mathcal{O}_C\rightarrow\mathcal{O}_C(z_i)$ be the canonical section, and choose a Hermitian metric $h_i$ on $\mathcal{O}_C(z_i)$, then for any open subset $U\subset C$ one puts
 \begin{align*}
  \Gamma(U,\tilde E_{\overrightarrow{\alpha}})=\{s\in \Gamma(U\bigcap \tilde C,\tilde E): |s|_h=O(\prod_{i=1}^n|t^{(i)}|^{\alpha^{(i)}-\varepsilon}_{h_i})\  \forall \varepsilon>0\}.
 \end{align*}
By definition, we have  the following properties \cite{s}
\begin{itemize}
  \item $j_*\tilde E=\bigcup_{\overrightarrow{\alpha}}\tilde E_{\overrightarrow{\alpha}}$, where $j:\tilde C\hookrightarrow C$ is the inclusion;
  \item  $\tilde E_{\overrightarrow{\alpha}}\subseteq \tilde E_{\overrightarrow{\beta}}$ whenever $\overrightarrow{\alpha}\geq \overrightarrow{\beta}$, i.e. $\alpha^{(i)}\geq \beta^{(i)}$, $i=1,\cdots, n$;
  \item for each $\overrightarrow{\alpha}$ there exists $\overrightarrow{\varepsilon'}>\overrightarrow{0}$\footnote{ For a fixed number $c$, the notation $\overrightarrow{c}$ stands for a vector with $n$ components, each of which is $c$. } such that $\tilde E_{\overrightarrow{\alpha}-\overrightarrow{\varepsilon}}=\tilde E_{\overrightarrow{\alpha}}$ for all $\overrightarrow{0}\leq \overrightarrow{\varepsilon}\leq \overrightarrow{\varepsilon'}$;
  \item $\tilde E_{\overrightarrow{\alpha}+\overrightarrow{1}}=\tilde E_{\overrightarrow{\alpha}}(-\mathcal{D})$ for all $\overrightarrow{\alpha}$.
\end{itemize}
Moreover, one can show the following.

\begin{proposition}[\cite{s,t}]\label{x}
Let $(\tilde E,\tilde \theta,h)$ be a tame harmonic bundle over $X$.
\begin{enumerate}
  \item The metric $h$ is acceptable.
  \item For any $\overrightarrow{\alpha}$, the sheaf $\tilde E_{\overrightarrow{\alpha}}$ over $C$ is locally free.
\end{enumerate}
\end{proposition}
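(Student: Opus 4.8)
The plan is to treat the two assertions separately, with part (1) feeding into part (2).

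For part (1) I would start from the harmonic equation, which reads $R(h)=-[\tilde\theta,\tilde\theta^{*_h}]$ and gives the pointwise inequality $|R(h)|_h\le 2|\tilde\theta|_h^2$. Writing $\tilde\theta=\Theta^{(i)}\,dt^{(i)}$ over $U^{(i)}\setminus\{z_i\}$, tameness bounds the eigenvalues of $\Theta^{(i)}$ by $\delta/|t^{(i)}|$, so the crude estimate $|\tilde\theta|_h^2\sim 1/|t^{(i)}|^2$ is already too weak: it exceeds the Poincar\'e-type growth $\delta/(|t^{(i)}|^2(\log|t^{(i)}|)^2)$ permitted by acceptability. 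The point is therefore to exploit the commutator structure rather than the bare norm. The core step is Simpson's a priori (Main) estimate for tame harmonic bundles on a punctured disk: decomposing the residue of $\Theta^{(i)}$ into semisimple and nilpotent parts, one shows that $h$ is mutually bounded with a model metric adapted to the parabolic weights for which the semisimple part is asymptotically normal. Its self-commutator with the adjoint then cancels to leading order; the nilpotent part contributes exactly a term of size $\delta/(|t^{(i)}|^2(\log|t^{(i)}|)^2)$, the curvature of the Poincar\'e metric; and the holomorphic corrections together with the cross terms are absorbed into an $L^p$ function $f^{(i)}$ with $p>1$. This is precisely the acceptability bound.

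For part (2), over $X=C\setminus D$ the sheaf $\tilde E_{\overrightarrow{\alpha}}$ restricts to $\tilde E$, which is locally free, so the assertion is local at each $z_i$. Here I would invoke Cornalba--Griffiths theory \cite{cg}: the acceptability established in part (1) guarantees that the metric defines a coherent extension of $\tilde E$ across $z_i$. Combined with the structural properties of the family $\{\tilde E_{\overrightarrow{\alpha}}\}$ recorded above --- that it is decreasing in $\overrightarrow{\alpha}$, left-continuous, and satisfies $\tilde E_{\overrightarrow{\alpha}+\overrightarrow{1}}=\tilde E_{\overrightarrow{\alpha}}(-\mathcal{D})$ --- one reduces to a single fixed $\overrightarrow{\alpha}$. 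Using the asymptotic frame furnished by the estimate in part (1), I would construct a local frame $s_1,\dots,s_r$ whose norms $|s_j|_h$ are comparable to $|t^{(i)}|^{\alpha^{(i)}}$ up to powers of $\log|t^{(i)}|$, adapted to the filtration; verifying that every section meeting the growth condition defining $\Gamma(U,\tilde E_{\overrightarrow{\alpha}})$ is an $\mathcal{O}_C$-linear combination of this frame yields local freeness.

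The main obstacle is the Main Estimate underpinning part (1): controlling the harmonic metric near the puncture and proving that the semisimple part of the Higgs residue is asymptotically orthogonal for $h$, so that the curvature is no larger than the Poincar\'e growth. This is the analytic heart of the work of Simpson and Mochizuki \cite{s,t}; once it is in hand, the local freeness in part (2) is comparatively formal.
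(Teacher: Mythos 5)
The paper does not prove this proposition at all: it is quoted verbatim from Simpson and Mochizuki \cite{s,t} (Proposition \ref{x} carries the citation in its header), so there is no in-paper argument to compare against. Your sketch is a faithful outline of the standard proof in those references: part (1) is Simpson's acceptability theorem, whose analytic core is indeed the Main Estimate showing that $h$ is mutually bounded with a model metric in which the semisimple part of the residue is normal (so its self-commutator vanishes to leading order) while the nilpotent part contributes exactly the Poincar\'e-type term $\delta/(|t^{(i)}|^2(\log|t^{(i)}|)^2)$; part (2) then follows from Cornalba--Griffiths coherence of the growth-filtered extensions. One remark on part (2): since $C$ is a smooth curve and $\tilde E_{\overrightarrow{\alpha}}$ is a subsheaf of $j_*\tilde E$, hence torsion-free, coherence already implies local freeness, so the explicit construction of an adapted frame with norms $\sim |t^{(i)}|^{\alpha^{(i)}}$ up to logarithms is not needed for local freeness per se --- though those norm estimates are exactly what one needs anyway to identify the jumping values and the induced parabolic structure used later in the paper (Proposition \ref{s}). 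No gaps beyond the deferral of the Main Estimate itself, which is appropriate given that the proposition is a cited result.
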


Then  a tame harmonic bundle $(\tilde E,\tilde \theta,h)$ over $\tilde C$ induces a parabolic bundle $(\tilde E_{\overrightarrow{0}}, \mathcal{L}_{\overrightarrow{0}})$ over $(C,D)$, where the parabolic structure $\mathcal{L}_{\overrightarrow{0}}=\{{\mathcal{F}}^{(i)}\}_{i=1,\cdots,n}$ on $\tilde E_{\overrightarrow{0}}$ is defined as follows.
Let
\begin{align*}
  \mathbb{F}_{\overrightarrow{\alpha}}=\tilde E_{\overrightarrow{\alpha}}/\tilde E_{\overrightarrow{1}}
\end{align*}
for $\overrightarrow{0}\leq\overrightarrow{\alpha}<\overrightarrow{1}$, and let
\begin{align*}
  \mathbb{F}^{(i)}_{\beta}=\bigcup_{\overrightarrow{0}\leq\overrightarrow{\alpha}<\overrightarrow{1}, \alpha^{(i)}\geq \beta}\mathbb{F}_{\overrightarrow{\alpha}}
\end{align*}
for $0\leq \beta<1$. One says $\beta$ is a jumping value if  $\mathbb{F}^{(i)}_\beta\neq \mathbb{F}^{(i)}_{\beta+\varepsilon}$ for any $\varepsilon>0$.
Then the filtration ${\mathcal{F}}^{(i)}$ is defined by $F^{(i)}_{\jmath}=
                 \mathbb{F}^{(i)}_{\beta^{(i)}_\jmath}|_{z_i}$ for  $\jmath=0,\cdots,\ell^{(i)}$,
where $0\leq\beta^{(i)}_0<\cdots<\beta^{(i)}_{\ell^{(i)}}<1$ are jumping values. $(\tilde E_{\overrightarrow{0}}, \mathcal{L}_{\overrightarrow{0}})$ is called the associated parabolic bundle of $(\tilde E,\tilde \theta,h)$, and it can be equipped with a weight system $\{\overrightarrow{\beta^{(i)}}\}_{i=1,\cdots,n}$ with
$\overrightarrow{\beta^{(i)}}=(\beta^{(i)}_0,\cdots,\beta^{(i)}_{\ell^{(i)}})$, which is called the induced weight system.

The following proposition is well-known (for example, cf. \cite[Lemma 6.2]{s}, \cite[Proposition 2.5]{e},  \cite[Lemma 2.13]{papa}, \cite[Proposition 3.15]{HKSZ}).
\begin{proposition}\label{s}
With respect to the induced weight system $\{\overrightarrow{\beta^{(i)}}\}_{i=1,\cdots,n}$, the parabolic degree of $(\tilde E_{\overrightarrow{0}}, \mathcal{L}_{\overrightarrow{0}})$ is zero.
\end{proposition}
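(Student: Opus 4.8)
The plan is to compute the parabolic degree by a Chern--Weil argument, reducing everything to the curvature of the harmonic metric $h$. Concretely, I would first invoke the parabolic Chern--Weil formula of Simpson and Mochizuki \cite{s,t}: for the parabolic bundle $(\tilde E_{\overrightarrow{0}},\mathcal{L}_{\overrightarrow{0}})$ attached to $(\tilde E,\tilde\theta,h)$ with the induced weight system $\{\overrightarrow{\beta^{(i)}}\}_{i=1,\cdots,n}$, one has
\begin{align*}
  \mathrm{pdeg}(\tilde E_{\overrightarrow{0}},\mathcal{L}_{\overrightarrow{0}})=\frac{\sqrt{-1}}{2\pi}\int_X \Tr\big(R(h)\big).
\end{align*}
The content of this identity is that the jumping values $\beta^{(i)}_\jmath$ defining $\mathcal{L}_{\overrightarrow{0}}$ measure exactly the growth rate of $|\cdot|_h$ along a local frame adapted to the filtration near each $z_i$, so that the deficiency between $\deg(\tilde E_{\overrightarrow{0}})$ and the curvature integral is precisely $-\sum_{i=1}^n\overrightarrow{d_{\mathcal{L}_{\overrightarrow{0}}}^{(i)}}\cdot\overrightarrow{\beta^{(i)}}$; adding this boundary contribution back recovers the parabolic degree.

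Granting this identity, the proof reduces to showing that the right-hand integral vanishes. Here I would use the harmonic bundle equation
\begin{align*}
  R(h)+[\tilde\theta,\tilde\theta^{*_h}]=0.
\end{align*}
Taking the fiberwise trace and using that the trace of the commutator of the endomorphism-valued forms $\tilde\theta$ and $\tilde\theta^{*_h}$ vanishes, $\Tr[\tilde\theta,\tilde\theta^{*_h}]=0$, we obtain $\Tr(R(h))=0$ identically on $X$. Equivalently, the determinant metric $\det h$ on $\det\tilde E|_X$ is flat. Hence the integrand vanishes pointwise and $\mathrm{pdeg}(\tilde E_{\overrightarrow{0}},\mathcal{L}_{\overrightarrow{0}})=0$, as claimed.

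The algebra of the trace is routine; the delicate part, and the step I expect to be the main obstacle, is justifying the Chern--Weil identity itself, namely the convergence of $\int_X\Tr(R(h))$ and the exact matching of the boundary contribution with the weight sum $\sum_{i=1}^n\overrightarrow{d_{\mathcal{L}_{\overrightarrow{0}}}^{(i)}}\cdot\overrightarrow{\beta^{(i)}}$. This rests on the acceptability of $h$ recorded in Proposition \ref{x}(1), together with the tameness of $(\tilde E,\tilde\theta)$ and Simpson's norm estimates near the punctures that tie the jumping values to the asymptotics of $h$; once $\Tr(R(h))\equiv 0$ is known the convergence is automatic, but the identification of the induced weights with the metric growth rates is exactly where tameness and acceptability enter.

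An alternative packaging I would keep in mind is to pass first to the determinant parabolic line bundle $(\det\tilde E_{\overrightarrow{0}},\det\mathcal{L}_{\overrightarrow{0}})$, whose parabolic degree equals that of $(\tilde E_{\overrightarrow{0}},\mathcal{L}_{\overrightarrow{0}})$ by additivity of the degree and of the weights under $\det$, and then to apply the rank-one Chern--Weil formula to the flat metric $\det h$. This isolates the boundary analysis to a single line bundle and avoids carrying the full endomorphism bundle through the asymptotic estimates, while the vanishing $\Tr(R(h))=0$ supplies the flatness that makes the line-bundle computation immediate.
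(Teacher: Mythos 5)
Your proposal is correct and takes essentially the same route as the paper: the paper's argument is precisely a hands-on verification of the parabolic Chern--Weil identity you cite, cutting the surface into $C_v$ and small disks $B_v$ around the punctures and using the norm asymptotics $|1|_h\sim|t^{(i)}|^{\beta^{(i)}_\jmath}$ to identify the boundary contribution of $\deg(\tilde E_{\overrightarrow{0}})$ with $-\sum_{i}\overrightarrow{d^{(i)}_{\mathcal{L}_{\overrightarrow{0}}}}\cdot\overrightarrow{\beta^{(i)}}$. The bulk term is killed exactly as you say, by $\Tr(R(h))=-\Tr[\tilde\theta,\tilde\theta^{*_h}]=0$ from the harmonic bundle equation.
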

\begin{proof}Obviously, $j^*\tilde E_{\overrightarrow{0}}=\tilde E$. For any $v>0$, let $C_v=\{z\in C: d(z,D)\geq e^{-v}\}$, $B_v=C\backslash C_v$, where $d(\bullet,\bullet)$ denotes the distance function on $C$, and  let $h_v$ be a smooth metric on $\tilde E_{\overrightarrow{0}}$ which coincides with the harmonic metric $h$ in a neighborhood of $C_v\subset C$. By definition,  we have
\begin{align*}
  \deg (\tilde E_{\overrightarrow{0}})&=\frac{\sqrt{-1}}{2\pi}\int_{C_v}\Tr(R(h_v))+\frac{\sqrt{-1}}{2\pi}\int_{B_v}\Tr(R(h_v))\\
  &=\frac{\sqrt{-1}}{2\pi}\int_{C_v}\Tr(R(h))+\frac{\sqrt{-1}}{2\pi}\int_{B_v}\Tr(R(h_v))\\
  &=\frac{\sqrt{-1}}{2\pi}\int_{B_v}\Tr(R(h_v)).
\end{align*}
When $v$ is sufficiently large, we assume $B_v=\bigcup\limits_{i=1}^n\mathcal{B}_{\delta_i}$ with $\mathcal{B}_{\delta_i}$ being a  disk  with center $z_i$ and small radius $\delta_i$.
Over $\mathcal{B}_{\delta_i}$, $\tilde E_{\overrightarrow{0}}$ is a direct sum of line bundles $L^{\beta_\jmath^{(i)}}$, where the norm of unit section of $L^{\beta_\jmath^{(i)}}$ is given approximately by $|1|_{h_v}\sim |t^{(i)}|^{\beta_\jmath^{(i)}}$. Then we have
\begin{align*}
  \deg (L^{\beta_\jmath^{(i)}})&=\frac{\sqrt{-1}}{\pi}\int_{\partial \mathcal{B}_{\delta_i}}|t^{(i)}|^{-\beta_\jmath^{(i)}}\partial|t^{(i)}|^{\beta_\jmath^{(i)}}=-\beta_\jmath^{(i)},
\end{align*}
which immediately yields that
\begin{align*}
  \mathrm{ pdeg}(\tilde E_{\overrightarrow{0}},\mathcal{L}_{\overrightarrow{0}})=\deg (\tilde E_{\overrightarrow{0}})+\sum\limits_{i=1}^n\sum\limits_{\jmath=0}^{\ell^{(i)}}d^{(i)}_\jmath\beta_\jmath^{(i)}=0
\end{align*}
for the dimension system $\overrightarrow{d^{(i)}}=(d^{(i)}_{0},\cdots,d^{(i)}_{\ell^{(i)}})$ of the associated parabolic structure $\mathcal{L}_{\overrightarrow{0}}$.
\end{proof}

Simpson proved the Kobayashi--Hitchin correspondence for stable weakly parabolic logarithmic Higgs bundles.

 \begin{theorem}[{\cite{s,t}}]\label{mb}
Let $(E,\mathcal{L},\theta)$ be a weakly parabolic logarithmic Higgs bundle of rank $r$ over $(C,D)$. If it is stable and of zero parabolic degree with respect to the given weight system $\{\overrightarrow{w^{(i)}}\}_{i=1,\cdots,n}$, then  the tame Higgs bundle $( E|_{\tilde C},\theta|_{\tilde C})$ over $\tilde C$ admits a unique harmonic metric $h$ up to multiplication by a  positive constant  such that the associated parabolic bundle
  is isomorphic to $(E,\mathcal{L})$
  and the induced weight system coincides with $\{\overrightarrow{w^{(i)}}\}_{i=1,\cdots,n}$.
\end{theorem}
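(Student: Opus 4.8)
The plan is to prove this as an instance of the Hitchin--Kobayashi correspondence in the tame parabolic setting, via the heat-flow method. The equation to be solved is the Hitchin equation $R(h)+[\theta,\theta^{*_h}]=0$ for a Hermitian metric $h$ on $(E,\theta)|_X$, with prescribed growth at the punctures dictated by the weight system $\{\overrightarrow{w^{(i)}}\}_{i=1,\cdots,n}$. First I would build an \emph{initial metric}. Near each $z_i$, the filtration $\mathcal{F}^{(i)}$ together with the weights $w^{(i)}_\jmath$ and the residual/nilpotent data of $\theta$ determine a local model metric whose unit sections of the graded piece of weight $w^{(i)}_\jmath$ have norm comparable to $|t^{(i)}|^{w^{(i)}_\jmath}$; these models solve the Hitchin equation to leading order. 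Patching them to a fixed smooth metric on the interior yields an acceptable metric $h_0$ (in the sense of the Definition above, cf. Proposition \ref{x}) whose pseudo-curvature $F_{h_0}=R(h_0)+[\theta,\theta^{*_{h_0}}]$, after contraction with the K\"ahler form, lies in $L^p$ for some $p>1$.

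Next I would reduce existence to a nonlinear elliptic problem. Writing $h=h_0\,e^{s}$ with $s$ self-adjoint with respect to $h_0$, the Hitchin equation takes the form $\Lambda F_{h_0}+L(s)+\mathcal{N}(s)=0$, where $L$ is the (elliptic, second-order) linearization and $\mathcal{N}$ collects the higher-order terms built from $\theta$ and $\theta^{*_{h_0}}$. I would solve this through the Donaldson heat flow $h^{-1}\partial_t h=-\sqrt{-1}\,\Lambda F_h$ on weighted Sobolev spaces adapted to the punctures; short-time and long-time existence follow from parabolic theory once acceptability of $h_0$ controls the inhomogeneous term. Here the zero parabolic degree hypothesis (Proposition \ref{s}) is exactly the consistency condition making the integrated trace equation $\int_X \Lambda F=0$ solvable, so that the flow can converge to an honest harmonic metric rather than one with nonzero central curvature.

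The heart of the argument, and the step I expect to be the main obstacle, is the a priori $C^0$ estimate forcing convergence. I would argue by contradiction: if $\sup_X|s|_{h_0}$ blows up along the flow, then after rescaling one extracts a nontrivial weakly holomorphic, $\theta$-invariant subsheaf of $E$ whose relative parabolic degree is nonnegative, so its parabolic slope is $\geq 0=\mathrm{pdeg}(E)/r$, contradicting the stability of $(E,\mathcal{L},\theta)$. This is the parabolic analogue of the Uhlenbeck--Yau argument and of Simpson's main estimate. Its difficulty is precisely the noncompactness of $X$: the relevant elliptic operators fail to be Fredholm on the usual Sobolev spaces, so every estimate must be carried out in weighted spaces that encode the prescribed growth rates at each $z_i$. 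Granting this estimate, the flow converges to a smooth harmonic metric $h$ on $X$.

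Finally I would identify the parabolic data and prove uniqueness. Tameness together with the asymptotic analysis of harmonic bundles near a puncture shows that the Cornalba--Griffiths extension $\tilde E_{\overrightarrow{0}}$ of $(E,\theta,h)$ recovers $E$, that the jumping values of the induced filtration are the prescribed weights $w^{(i)}_\jmath$, and that the induced filtration is $\mathcal{F}^{(i)}$; this matches the associated parabolic bundle with $(E,\mathcal{L})$ and the induced weight system with $\{\overrightarrow{w^{(i)}}\}_{i=1,\cdots,n}$. For uniqueness, two such harmonic metrics differ by a positive self-adjoint automorphism $g$; harmonicity of both forces $g$ to be a holomorphic endomorphism of $(E,\theta)$ compatible with the parabolic structure, and stability (hence simplicity) forces $g$ to be scalar, pinned to the identity by the common normalization. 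This yields $h$ uniquely.
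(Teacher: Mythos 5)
The paper does not prove this theorem: it is quoted verbatim from Simpson and Mochizuki (the references \cite{s,t}), so there is no in-paper argument to compare against. Your outline faithfully reproduces the strategy of the cited proof --- the initial model metric adapted to the weights, the heat flow on weighted spaces, parabolic degree zero as the integrability condition, the Uhlenbeck--Yau/Simpson main estimate producing a destabilizing $\theta$-invariant subsheaf under blow-up, identification of the filtration via the Cornalba--Griffiths extension, and uniqueness from stability --- and correctly flags the main estimate in the non-Fredholm weighted setting as the hard analytic core; as a plan it is sound, though of course the deferred analytic steps are precisely where the content of Simpson's and Mochizuki's work lies.
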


  Let $(E,\mathcal{L},\theta)$ be a weakly parabolic logarithmic Higgs bundle over $(C,D)$ as that in the above theorem, hence it produces a flat bundle $( E', \nabla)$ over $\tilde C$ by the harmonic metric $h$ on $( E|_{\tilde C},\theta|_{\tilde C})$, where  $E'$ is the underlying $C^\infty$ vector bundle of $E|_{\tilde C}$ with the holomorphic structure $d''=\bar\partial_E+\theta^{*_h}$ and $\nabla=\partial_h+\theta$ is the flat connection. By $h$-metric extensions, we have a parabolic bundle $(E'_{\overrightarrow{(0)}}, \mathcal{L}'_{\overrightarrow{0}})$  over $(C,D)$ associated to $(E', \nabla)$. The connection $\nabla$ can be also extended to a logarithmic connection $\nabla_{\overrightarrow{0}}:E'_{\overrightarrow{0}}\rightarrow E'_{\overrightarrow{0}}\otimes \Omega^1_C(\mathcal{D})$ on $ E'_{\overrightarrow{0}}$, which is weakly compatible with $\mathcal{L}'_{\overrightarrow{0}}$. Consequently, $( E', \nabla)$  is extended to a weakly  parabolic logarithmic flat bundle  $(E'_{\overrightarrow{0}},  \mathcal{L}'_{\overrightarrow{0}}, \nabla_{\overrightarrow{0}})$ over $(C,D)$. The nonabelian Hodge correspondence is given by
  \begin{align*}
 \textsf{NH}: (E,\mathcal{L},\theta)\mapsto(E'_{\overrightarrow{0}}, \mathcal{L}'_{\overrightarrow{0}}, \nabla_{\overrightarrow{0}}).
\end{align*}

Finally, we compare the metric extension described as above and the Deligne extension \cite{d,z}. In general, let $(\mathcal{E},\mathfrak{D})$ be a flat bundle over $X$, and let $T^{(i)}=T(\gamma^{(i)})\in \mathrm{GL}(V^{(i)}_*)$ be the monodromy of $( \mathcal{E}, \mathfrak{D})$ along a loop $\gamma^{(i)}$ once around $z_i$, where $V^{(i)}_*=\mathcal{E}|_{z^{(i)}_*}$ for some point $z^{(i)}_*\in U^{(i)}\backslash\{z_i\}$. Assume each monodromy is nonexpanding (i.e. all eigenvalues  have absolute value 1 \cite{e}).
For any  $0\leq \alpha^{(i)}<1$, one introduces
\begin{align*}
 Q_{\alpha^{(i)}}=\{v\in V^{(i)}_*:(T^{(i)}-\zeta_{\alpha^{(i)}})^rv=0\},
\end{align*}
where $\zeta_{\alpha^{(i)}}=e^{2\pi\sqrt{-1}{\alpha^{(i)}}}$, then there are finitely many $\alpha^{(i)}\in[0,1)$ such that  $ W_{\alpha^{(i)}}$ are non-zero, and we denote them
by $\alpha^{(i)}_{\jmath},\jmath=0,\cdots,\ell^{(i)}$, with $0\leq \alpha^{(i)}_{0}<\cdots<\alpha^{(i)}_{\ell^{(i)}}<1$. Define the unipotent operator
\begin{align*}
 T_{\alpha^{(i)}_\jmath}=\zeta_{\alpha^{(i)}_\jmath}^{-1} T^{(i)}|_{Q_{\alpha^{(i)}}},
\end{align*}
and  the nilpotent operator
\begin{align*}
 N_{\alpha^{(i)}_\jmath}&=\frac{1}{2\pi\sqrt{-1}}\log T_{\alpha^{(i)}_\jmath}\\
 &=\frac{1}{2\pi\sqrt{-1}}\sum_{k=1}^{m_{\alpha^{(i)}_\jmath}}(-1)^{k+1}\frac{(T_{\alpha^{(i)}_\jmath}-\mathrm{Id})^k}{k}
\end{align*}
with $n_{\alpha^{(i)}_\jmath}=\left\{
                                \begin{array}{ll}
                                  m_{\alpha^{(i)}_\jmath}, & \hbox{$N_{\alpha^{(i)}_\jmath}\neq 0$} \\
                                  0, & \hbox{$N_{\alpha^{(i)}_\jmath}=0$}
                                \end{array}
                              \right.
$ being the nilpotency index of $N_{\alpha^{(i)}_\jmath}$. There is a unique  weight filtration
\begin{align*}
  \mathcal{W}^{(\alpha^{(i)}_\jmath)}: 0\subset W^{(\alpha^{(i)}_\jmath)}_0\subseteq {W}^{(\alpha^{(i)}_\jmath)}_1\subseteq\cdots\subseteq {W}^{(\alpha^{(i)}_\jmath)}_{2n^{(\alpha^{(i)}_\jmath)}}=Q_{\alpha^{(i)}_\jmath}
\end{align*}
 on $Q_{\alpha^{(i)}_\jmath}$ determined by the following properties
    \begin{itemize}
          \item $ N_{\alpha^{(i)}_\jmath}(W^{(\alpha^{(i)}_\jmath)}_a)\subseteq W^{(\alpha^{(i)}_\jmath)}_{a-2}$, $a=0,1,\cdots,2n^{(\alpha^{(i)}_\jmath)} $,
          \item $ N^{a-n^{(\alpha^{(i)}_\jmath)}}_{\alpha^{(i)}_\jmath}: \mathrm{Gr}^{\mathcal{W}^{(\alpha^{(i)}_\jmath)}}_a\simeq \mathrm{ Gr}^{\mathcal{W}^{(\alpha^{(i)}_\jmath)}}_{2n^{(\alpha^{(i)}_\jmath)}-a}$, $a=n^{(\alpha^{(i)}_\jmath)}+1, \cdots,2m^{(\alpha^{(i)}_\jmath)} $.
     \end{itemize}
By removing the duplicate copies, it gives rise to a filtration
\begin{align*}
 \widetilde{ \mathcal{W}}^{(\alpha^{(i)}_\jmath)}: 0\subset\widetilde{W}^{(\alpha^{(i)}_\jmath)}_0\subset \widetilde{W}^{(\alpha^{(i)}_\jmath)}_1\subset\cdots\subset \widetilde{W}^{(\alpha^{(i)}_\jmath)}_{n^{(\alpha^{(i)}_\jmath)}-1}=Q_{\alpha^{(i)}_\jmath}.
\end{align*}
Let $v^{(i)}_1,\cdots, v^{(i)}_r$ be a basis of $V^{(i)}_*$ adapted to the decomposition
\begin{align*}
  V^{(i)}_*=\bigoplus_{\jmath=0}^{\ell^{(i)}}Q_{\alpha^{(i)}_\jmath},
\end{align*}
 which can be viewed as multi-valued horizontal sections of $( \mathcal{E},\mathfrak{D})|_{U^{(i)}\backslash\{z_i\}}$, then
for $v^{{(i)}}_s\in Q_{\alpha^{(i)}_\jmath}$, $s\in \{1,\cdots,r\}$, we define the Deligne extension
\begin{align*}
  \tilde v^{{(i)}}_s(t^{(i)})&=\exp(\log t^{(i)}\alpha^{(i)}_\jmath+\log t^{(i)}N_{\alpha^{(i)}_\jmath})v^{{(i)}}_s\\
  &=(t^{(i)})^{\alpha^{(i)}_\jmath}\sum_{k=0}^{m_{\alpha^{(i)}_\jmath}}\frac{1}{k!}(\log t^{(i)}N_{\alpha^{(i)}_\jmath})^kv^{{(i)}}_s
\end{align*}
as a global section of $(\mathcal{E},\mathfrak{D})|_{U^{(i)}\backslash\{z_i\}}$. We extend $\mathcal{E}$ to a vector bundle  $\overline{\mathcal{E}}$ over
 $C$ via defining the  space of sections over $U^{(i)}$ to be the $\mathcal{O}_{U^{(i)}}$-module generated by $\{\tilde v^{(i)}_s\}_{s=1,\cdots,r}$, meanwhile the connection $\mathfrak{D}$ can be extended to a logarithmic  connection $\overline{\mathfrak{D}}:\overline{\mathcal{E}}\rightarrow\overline{\mathcal{E}}\otimes \Omega^1_C(\mathcal{D})$ on $\overline{\mathcal{E}}$. Moreover,    we can define two filtrations  $\mathcal{F}^{(i)}$ and $ \mathcal{G}^{(i)}$ on the fiber $\overline{\mathcal{E}}|_{z_i}\simeq V^{(i)}_*$  by
 \begin{align*}
 F^{(i)}_\jmath&=\bigoplus_{\imath=\jmath}^{\ell^{(i)}} Q_{\alpha^{(i)}_\imath}, \ \jmath=0,\cdots,\ell^{(i)},\\
 G^{(i)}_{\sum_{\jmath=0}^{\kappa-1}n_{\alpha^{(i)}_\jmath}+\imath_\kappa}&=(\bigoplus_{a=\imath_\kappa}^{n_{\alpha^{(i)}_\kappa}-1}\mathrm{Gr}^{\widetilde{\mathcal{W}}^{(\alpha^{(i)}_\kappa)}}_{a})
  \bigoplus (\bigoplus_{\jmath=\kappa+1}^{\ell^{(i)}}\bigoplus_{a=0}^{n^{(\alpha^{(i)}_\jmath)}-1} \mathrm{Gr}^{\widetilde{\mathcal{W}}^{(\alpha^{(i)}_\jmath)}}_{a}),\ \kappa=0,\cdots,\ell^{(i)}, \imath_\kappa=0,\cdots, n_{\alpha^{(i)}_\kappa}-1,
 \end{align*}
 respectively. Then $(\overline{\mathcal{E}},\overline{\mathcal{L}}=\{\mathcal{F}^{(i)}\}_{i=1,\cdots,n},\overline{\mathfrak{D}})$ defines a weakly parabolic logarithmic flat bundle, and $(\overline{\mathcal{E}},\overline{\mathcal{L}'}=\{\mathcal{G}^{(i)}\}_{i=1,\cdots,n},\overline{\mathfrak{D}})$ defines a parabolic logarithmic flat bundle. One assigns $\mathrm{Gr}^{\mathcal{F}^{(i)}}_\jmath$ the number $\alpha^{(i)}_\jmath$ to form an induced  weight system $\{\overrightarrow{\alpha}\}_{i=1,\cdots,n}$ on $(\overline{\mathcal{E}},\overline{\mathcal{L}})$. Similarly, one assigns $\mathrm{Gr}^{\mathcal{G}^{(i)}}_{\sum_{\jmath=0}^{\kappa-1}n_{\alpha^{(i)}_\jmath}+\imath_\kappa}$ the number $\alpha^{(i)}_{\kappa-1}+\varepsilon^{(i)}_{\imath_k}$ with $\alpha^{(i)}_{\kappa-1}+\varepsilon^{(i)}_{\imath_k}<\alpha^{(i)}_{\kappa-1}+\varepsilon^{(i)}_{\imath_k+1} $ to form a weight system $\{\overrightarrow{\alpha_\varepsilon}\}_{i=1,\cdots,n}$ on $(\overline{\mathcal{E}},\overline{\mathcal{L}'})$.

\begin{proposition}[{cf. \cite[Section 7]{s}, \cite[Lemma 2.4]{e}, and \cite[Proposition 3.28]{t}}] 
Let $(E,\mathcal{L},\theta)$ be a nilpotent parabolic logarithmic  Higgs bundle  over $(C,D)$ as given in Theorem \ref{mb}, and let $( E', \nabla)$ be the flat bundle over $\tilde C$ obtained by the harmonic metric $h$ on $( E|_{\tilde C},\theta|_{\tilde C})$.
 \begin{enumerate}
   \item $(\overline{E'},\overline{\mathcal{L}},\overline{\nabla})$ described as above is isomorphic to $(E'_{\overrightarrow{(0)}},  \mathcal{L}'_{\overrightarrow{(0)}}, \nabla_{\overrightarrow{(0)}})$ with the same induced weight system.
   \item $v\in \mathcal{W}^{(\alpha^{(i)}_\jmath)}_a$, $a=0,1,\cdots,2m^{(\alpha^{(i)}_\jmath)}$, if and only if
      \begin{align*}
        |\tilde v(t^{(i)})|_h=O(|t^{(i)}|^{\alpha^{(i)}_\jmath}|\log |t^{(i)}||^{\frac{n_{\alpha^{(i)}_\jmath}-a}{2}}).
      \end{align*}
   \item If the weight system $\{\overrightarrow{\alpha_\varepsilon}\}_{i=1,\cdots,n}$ satisfies that
  \begin{itemize}
    \item $\min\{|\varepsilon^{(i)}_{\imath_k}|\}$ is  sufficiently small,
    \item $\sum\limits_{i=1}^n\sum\limits_{\kappa=0}^{\ell^{(i)}} \sum\limits_{\imath_\kappa=0}^{n_{\alpha^{(i)}_\kappa}-1} (\alpha^{(i)}_{\kappa-1}+\varepsilon^{(i)}_{\imath_k}) \dim_\mathbb{C}Gr^{\mathcal{G}^{(i)}}_{\sum_{\jmath=0}^{\kappa-1}n_{\alpha^{(i)}_\jmath}+\imath_\kappa}=\sum\limits_{i=1}^n\sum\limits_{\jmath=0}^{\ell^{(i)}} \alpha^{(i)}_{\jmath}\dim_\mathbb{C}Gr^{\mathcal{F}^{(i)}}_{\jmath}$,
  \end{itemize}
then the parabolic logarithmic flat bundle
$(\overline{E'},\overline{\mathcal{L'}},\overline{\nabla})$ is $\{\overrightarrow{\alpha_\varepsilon}\}_{i=1,\cdots,n}$-stable.
 \end{enumerate}

\end{proposition}

\end{document}